\newmdenv[
  topline=false,
  bottomline=false,
  skipabove=\topsep,
  skipbelow=\topsep
]{siderules}
\newcommand{\inner}[1]{\langle #1 \rangle}
\newcommand{\spann}[1]{\text{span} \{ #1 \}}
\newcommand{\f}{\mathfrak{f}}
\newcommand{\ttt}{\mathfrak{t}}
\newcommand{\g}{\mathfrak{g}}
\newcommand{\s}{\mathfrak{s}}
\newcommand{\bb}{\mathfrak{b}}
\newcommand{\rr}{\mathfrak{r}}
\newcommand{\cc}{\mathfrak{c}}
\newcommand{\aaa}{\mathfrak{a}}
\newcommand{\ddd}{\mathfrak{d}}
\newcommand{\q}{\mathfrak{q}}
\newcommand{\p}{\mathfrak{p}}
\newcommand{\m}{\mathfrak{m}}
\newcommand{\Light}{\mathbb{P}(\mathcal{L})}
\newcommand{\Q}{\mathcal{Q}}
\newcommand{\D}{\mathcal{D}}
\newcommand{\V}{\mathcal{V}}
\newcommand{\E}{\mathcal{E}}
\newcommand{\PP}{\mathcal{P}}
\newcommand{\bigskipp}{\bigskip \noindent}
\newtheoremstyle{dotless}{6pt}{18pt}{}{}{\bfseries}{.}{\newline}{}
\theoremstyle{dotless}
\newtheorem{thm}{Theorem}
\newtheorem{defi}[thm]{Definition}
\newtheorem{lem}[thm]{Lemma}
\newtheorem{lemdef}[thm]{Lemma and Definition}
\newtheorem{rem}[thm]{Remark}
\newtheorem{prop}[thm]{Proposition}
\newtheorem{cor}[thm]{Corollary}
\newtheorem{fact}[thm]{Fact}
\newcommand{\changefont}[3]{\fontfamily{#1} \fontseries{#2} \fontshape{#3} \selectfont}
\title{Isothermic nets with spherical parameter lines from discrete holomorphic maps}
\author{Tim Hoffmann and Gudrun Szewieczek}
\begin{document}
\bibliographystyle{plain}
\maketitle
\color{black}
\begin{center}
\begin{minipage}{13cm}\small
\textbf{Abstract.} We prove that all discrete isothermic nets with a family of planar or spherical lines of curvature can be obtained from special discrete holomorphic maps via lifted-folding. This novel approach is a generalization and discretization of a classical method to create planar curvature lines on smooth surfaces. In particular, this technique provides an efficient way to construct discrete isothermic topological tori composed of fundamental pieces from discrete periodic holomorphic maps. 
\end{minipage}
\end{center}
%
%
\section{Introduction}
\noindent The main goal of this work is to determine all  quadrilateral surfaces $f: \mathbb{Z}^2 \supset I \times J \to \mathbb{R}^3$ with circular faces satisfying the following two properties:
\begin{itemize}
\item[(1)] (at least) one family of parameter lines lie on planes or spheres and
\item[(2)] the cross-ratios of all faces are constant.
\end{itemize}
We tackle this problem from the viewpoint of integrable discrete differential geometry, a recent field of research that aims to develop structure-preserving and self-contained integrable discretizations of smooth differential geometric concepts. 

In this realm quadrilateral surfaces with circular faces can be understood as equivalents of smooth surfaces parametrized by lines of curvature \cite{book_discrete}. Circular nets fulfilling property (1) have been recently studied in \cite{bobenko2023circular} (see also \cite{multi_nets, cone_nets} for partial classifications). The architectural design of nets with planar parameter lines is discussed in~\cite{planar_pottmann, TELLIER2019102880}.

\bigskipp The additional requirement~(2) adds an integrable structure to those nets, mimicking a smooth conformal metric \cite{BobenkoPinkalliso}. Therefore, quadrilateral surfaces with the two required properties provide discrete counterparts to smooth isothermic surfaces with a family of planar or spherical lines of curvature (see for example \cite{bernstein,bobenko2023isothermic,chopembszew,Darboux_planar, Darboux_book_iso,musso_darboux_dupin} for the study of the smooth case). 

Notably, this rich class of smooth surfaces contains several pioneering examples that provided solutions to long-standing open problems. Foremost among these is Wente's torus \cite{wente}, a compact immersed surface of constant mean curvature. Recently, specific isothermic tori with one family of planar curvature lines played a crucial role in the discovery of compact Bonnet pairs \cite{bobenko2023compact}. Furthermore, great interest in this surface class also arises from free boundary and capillary problems: the authors in \cite{minimal_free_boundary} detected compact free boundary minimal annuli with a family of spherical curvature lines that are immersed in the unit ball $\mathbb{B}^3$ in $\mathbb{R}^3$; see~\cite{cerezo2022annular, cerezo2024free} for similar problems regarding CMC-surfaces. 

\bigskipp In the spirit of discrete differential geometry, in the paper we also consider discrete nets fulfilling a slightly generalized condition~(2) and require that
\begin{itemize}
\item[(2a)] the cross-ratios along each stripe bounded by two planar or spherical parameter lines are constant.
\end{itemize}
This amounts to a curvature lines preserving  reparametrization along the planar or spherical curvature lines of a smooth isothermic surface \cite{book_discrete}.
\\\\To the best of our knowledge a full description of discrete isothermic nets with a family of spherical curvature lines is an open problem. Simple examples are provided by discrete isothermic channel surfaces, where one family of parameter lines lie on circles (see~\cite{discrete_channel}). A numerical compact example on a (5x7)-lattice that satisfies both properties~(1) and (2) has been given in \cite{bobenko2023compact}. 
First explicit examples of compact discrete isothermic tori with cross-ratios factorizing function that are Darboux transformations of discrete homogeneous tori in 3-space have been recently computed in~\cite{iso_tori_discrete}.
\\\\As one main result in Theorem~\ref{thm_mflexible} and Corollary~\ref{cor_unfolding} we will prove that any discrete isothermic net fulfilling properties (1) and (2a) may be associated to a discrete holomorphic map (a planar circular net with constant cross-ratio along each stripe in one coordinate direction). Each planar or spherical parameter line of the isothermic net is then related by a specific M\"obius transformation to the corresponding parameter line in the discrete holomorphic map. Conversely, we will also characterize all discrete holomorphic maps that give rise to the sought-after isothermic nets in 3-space.

For discrete planar parameter lines our approach is a discrete version of a classical construction originally found by Ribaucour and Darboux. The case of spherical parameter lines is a generalization of this concept and a novel approach to the study of spherical curvature lines. Below, we will briefly describe the constructions that are known for creating smooth surfaces with planar curvature lines.
%
%
\\\\\textbf{Deformations of smooth surfaces with a family of planar curvature lines \cite{rib,  roquet}.} Given a smooth surface $f: \mathbb{R}^2 \supset U \times I \to \mathbb{R}^3$ with a family of planar curvature lines: $f_i(u):=f\vert_{i = const} \in \Pi_i$, where $(\Pi_i)_{i \in I}$ denotes the family of planes containing the planar $u$-lines of curvature. Then the planes $(\Pi_i)_i$ viewed as tangent planes, generically envelop a developable surface~$\Delta$. As a ruled surface with vanishing Gaussian curvature, $\Delta$ can be isometrically deformed such that the generators are preserved. Any such deformation induces a map $\Phi: I \to \{ \text{isometries in } \mathbb{R}^3 \}$ that maps in particular each tangent plane $\Pi_i$ to the corresponding tangent plane~$\Phi_i(\Pi_i)=:\hat{\Pi}_i$ of the deformed developable surface. 

If these isometries are applied to the planar curvature lines of~$f$, a new surface $\hat{f}:=(\Phi_i(f_i))_{i \in I}$ with planar parameter lines in $\hat{\Pi}_i$ is obtained. As Ribaucour has proven, those planar curves are again curvature lines on $\hat{f}$. 

Thus, generator-preserving isometric deformations of developable surfaces induce deformations within the class of surfaces with a family of planar lines of curvature. We call such deformations \emph{lifted-foldings}. As a special case, if $\Delta$ is unfolded to a plane, the new surface~$\hat{f}$ yields an  orthogonally parametrized planar region.
\\\\\textbf{Reduction to smooth isothermic surfaces with planar parameter lines.} Darboux~\cite{Darboux_planar, Darboux_book_iso} found that, astonishingly, this deformation for surfaces with planar lines of curvature interplays well with conformal metrics: an isothermic surface with a family of planar curvature lines stays isothermic under lifted-foldings. 

Particularly, by unfolding the associated developable surface~$\Delta$ to a plane, any isothermic surface with planar curvature lines is naturally related to a holomorphic map. Explicit parametrizations of these special holomorphic maps can be specified using elliptic functions \cite{adam, bobenko2023isothermic, Darboux_book_iso} and result in parametrizations for isothermic surfaces with a family of planar curvature lines.

Recently \cite{bobenko2023isothermic}, isothermic tori from fundamental pieces with one family of planar curvature lines were classified. The authors use this method to close the isothermic surfaces in a two-step procedure: once the potential planar parameter lines in the holomorphic map are closed, admissible lifted-foldings then lead to isothermic tori in $\mathbb{R}^3$.

Independently of this approach, in \cite{chopembszew} the geometry of planar and spherical curvature lines of Lie applicable surfaces (which also include isothermic surfaces) were studied. It turned out that such lines of curvature, when suitably projected, are Lie transforms of constrained elastic curves in various space forms as described in~\cite{pinkall_willmore, Heller_elastic}.  

\begin{figure}
\begin{minipage}{4cm}
\hspace*{-3cm}\includegraphics[scale=0.15]{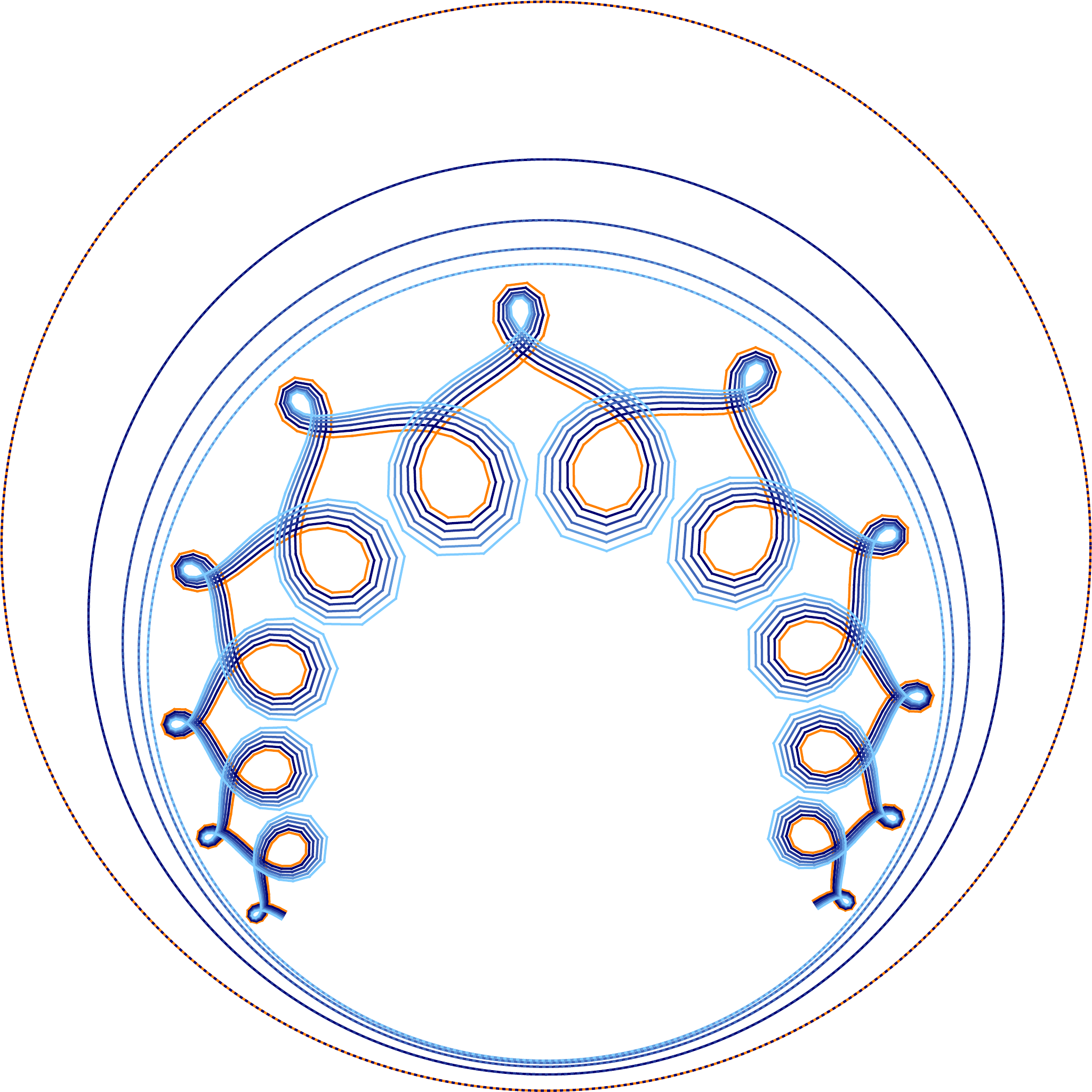}
\end{minipage}
\begin{minipage}{4cm}
\hspace*{-1.4cm}\includegraphics[scale=0.24]{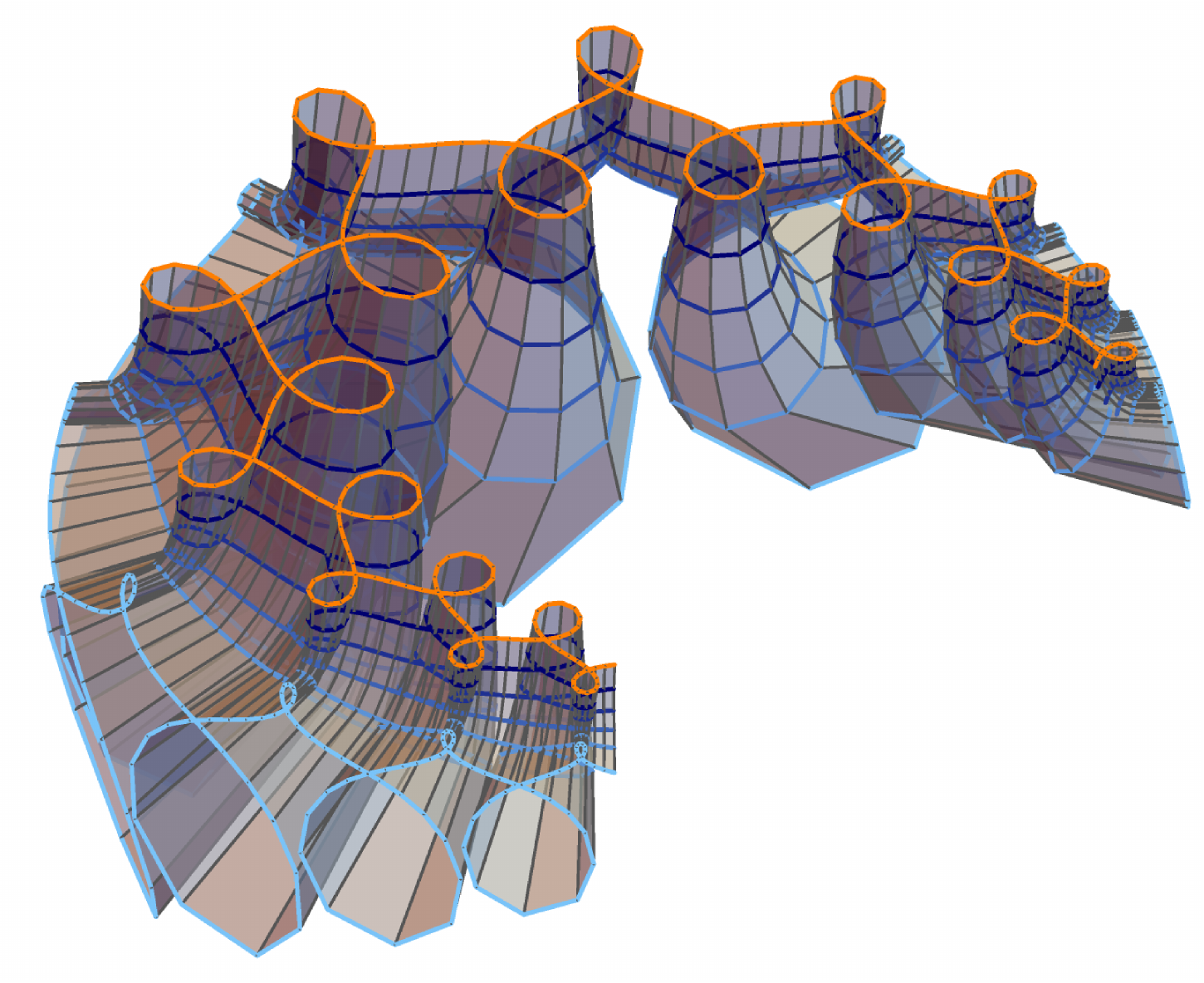}
\end{minipage}
\begin{minipage}{4cm}
\hspace*{0.8cm}\includegraphics[scale=0.25]{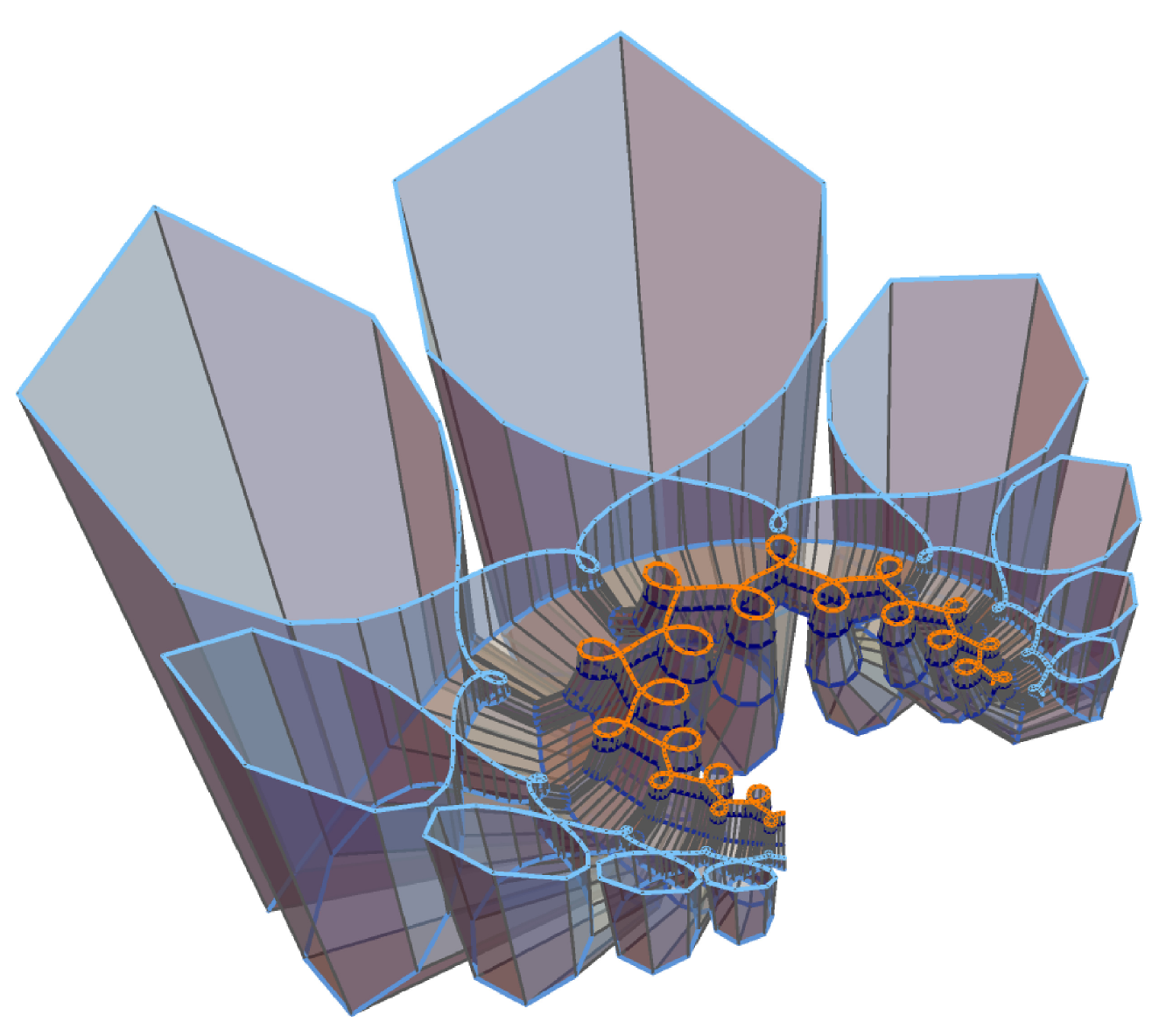}
\end{minipage}
\caption{A discrete holomorphic map with circular folding axes and two discrete isothermic nets with a family of spherical curvature lines obtained via lifted-folding.}\label{fig_girlande}
\end{figure}

\bigskipp The contents and main results of the paper are as follows.
%
%

\textbf{Section~\ref{sect_general_spherical}} is devoted to the construction of quadrilateral surfaces satisfying condition~(1), that is, circular nets with a family of planar or spherical lines of curvature. By using methodologies developed in~\cite{r_congr}, we show that the evolution map of any circular stripe bounded by two spherical parameter lines has a 3-dimensional subspace of fixed points in~$\mathbb{R}^{4,2}$ (see Proposition~\ref{prop_spherical_evolution}). Based on this observation, we develop the concept of lifted-folding to generate discrete circular nets with a family of planar or spherical curvature lines. As shown in~Theorem~\ref{thm_mflexible}, this approach provides deformations in the class of these circular nets. In particular, any such net can be “flattened” to a circular net in a plane or a 2-sphere.
%

To conclude this section we generate two special subclasses: circular nets with a family of parameter lines in Euclidean planes and discrete Joachimsthal surfaces. The constructions reveal that also in the discrete case lifted-foldings provide deformations within these two  subclasses.
%
%
\\\\Key for the study of nets satisfying properties (1) and (2a) in \textbf{Section~\ref{sect_iso}} is Proposition~\ref{prop_constant_cr}:  lifted-folding preserves constancy of cross-ratios along a circular stripe bounded by two spherical parameter lines. Thus, via lifted-folding, any discrete isothermic net with a family of spherical curvature lines can be associated to a discrete holomorphic map. 

As a consequence, generating all isothermic nets with a family of spherical parameter lines is equivalent to finding all discrete holomorphic maps that allow for lifted-foldings. We classify all these holomorphic maps and give a description in terms of appropriate initial data (see Construction~2).

As expected from their smooth counterparts, special solutions are provided by M\"obius transforms of discrete constrained elastic curves in space forms in the sense of~\cite{discrete_elastic}. In more detail, we prove that any discrete constrained elastic curve in a space form can be extended to a discrete holomorphic map that allows for lifted-foldings. The other coordinate lines are then also discrete constrained elastic in appropriate space forms. 

%
%

\bigskipp In \textbf{Section~\ref{sect_special}} we discuss some monodromy questions related to the posed problem. Closedness or quasi-periodicity of one initial curve is automatically propagated if the curve is extended to a holomorphic map that admits lifted-foldings. Therefore, our presented method turns out to be highly efficient to create isothermic nets with certain monodromies.  

We remark that the generated discrete quasi-periodic holomorphic maps are solutions of the cross-ratio dynamics studied in \cite{affolter2023integrable, AFIT_dynamics} (see also \cite{periodic_conformal} for the periodic case). Explicit examples obtained from quasi-periodic discrete elastic curves in Euclidean space \cite{fairing_elastica} as initial curves, as well as their natural generalizations to space forms, are investigated.

Furthermore, since lifted-folding preserves closedness of the spherical parameter lines, we immediately obtain discrete isothermic cylinders when starting with a periodic discrete holomorphic map. Thus, to construct topological tori, only the non-spherical coordinate direction has to be controlled.   In Subsection~\ref{subsect_tori}, by applying some reflection principle, we derive discrete isothermic tori composed of fundamental pieces.
%
%

\bigskipp\textbf{Technical framework.} We consider quadrilateral surfaces on a graph $\mathcal{G}^2=(\V^2, \E^2)$ that represents a simply connected subset of the lattice $\mathbb{Z}^2$, where $\V^2$ and $\E^2$ denote the vertices and edges, respectively. With respect to the underlying structure of $\mathbb{Z}^2$, any such quadrilateral surface decomposes into two sequences of discrete coordinate curves. Consecutive indices $\{i,j,k, \cdots \}$ denote neighbouring vertices in such a discrete coordinate curve. The edge that connects the vertex $i$ and $j$ is denoted by $(ij)$.

A discrete curve will live on a 1-dimensional connected graph~$\mathcal{G}=(\V, \E)$.

\bigskipp We will describe our theory in the light cone model, because this provides a simple formulation for the concept of lifted-folding. Moreover, it will also enable us in Subsection~\ref{subsect_holo_elastic} to directly interpret the geometry of the appearing discrete curves in space forms. 

For more details about the light cone model the reader is referred to the Appendix.

\bigskipp \textbf{Acknowledgements.} We thank Andrew Sageman-Furnas and Jannik Steinmeier for pleasant discussions around this topic. The second author is further very grateful for helpful discussions about smooth surfaces with spherical curvature lines with Fran Burstall, Joseph Cho and Mason Pember during a “Research in Pairs” stay in Oberwolfach. 

This research was supported by the DFG Collaborative Research Center TRR 109 “Discretization in Geometry and Dynamics”. 
\section{Discrete planar and spherical curvature lines}\label{sect_general_spherical}
\noindent In this section we demonstrate that circular nets with a family of discrete planar or spherical lines of curvature allow for canonical deformations that preserve circularity of the net as well as planarity or sphericity of those parameter lines. The discrete theory presented here imitates and generalizes a classical approach originally found by Ribaucour \cite{rib, roquet}.

In particular, it will be proven that these nets can be “flattened” to a circular net in a plane or a 2-sphere. Thus, this concept establishes a connection between circular nets in 3-space with parameter lines on spheres and specific circular nets in 2-space.

Since our approach is invariant under M\"obius transformations, we do not distinguish between planes and spheres unless explicitly stated. Thus, the set of spheres includes Euclidean planes.
%
%
\subsection{Ribaucour transformations of discrete curves}
It is well-established that curvature line parametrizations of smooth surfaces can be modelled by quadrilateral surfaces with circular faces~\cite{book_discrete}. That is, corresponding point pairs of two neighbouring discrete curvature lines are concircular. Thus, two adjacent discrete curvature lines are related by a so-called \emph{discrete Ribaucour transformation}. 

As pointed out in \cite{r_congr}, any Ribaucour pair of two discrete curves $(f, g)$ admits a description in terms of two initial points $(f_0, g_0)$ and a 1-parameter family of M\"obius transformations that simultaneously evolve this initial point pair. To be more precise, in the light cone model, those M\"obius transformations are given by simple \emph{M-inversions}, i.\,e.\,reflections that preserve the point sphere complex and therefore map points to points. Geometrically, those are either usual M\"obius reflections in spheres or orientation-reversing antipodal reflections (c.\,f.\,Appendix).

After introducing some basic notions, we show that Ribaucour pairs of spherical curves are characterized by specific evolutions.

\bigskip

\begin{defi}
Let $f: \mathcal{V} \rightarrow \Light$ be a discrete curve, then a map $\sigma: \mathcal{E} \rightarrow \{ \text{M-inversions} \}$ is said to be an \emph{evolution map of $f$} if $\sigma_{ij}(\mathfrak{f}_i)=\mathfrak{f}_j$.

In the light cone model such M-inversions are given by reflections 
\begin{equation*}
\sigma_{ij}(x)=x-\frac{2\inner{x,\mathfrak{a}_{ij}}}{\inner{\mathfrak{a}_{ij},\mathfrak{a}_{ij}}}\mathfrak{a}_{ij}, \ \text{where} \ \mathfrak{a}_{ij}:=\mathfrak{f}_i -\mathfrak{f}_j \perp \p
\end{equation*}
for a fixed choice of homogeneous coordinates $\mathfrak{f}_i$ and $\mathfrak{f}_j$. We call the family of vectors $\{a_{ij}\}_\mathcal{E}$ the \emph{evolution complexes}.
\end{defi}
\noindent Any evolution map of $f$ gives rise to discrete Ribaucour transforms: let $g_0 \in \mathbb{P}(\mathcal{L})$ be an arbitrary initial point, then the iteratively defined curve 
\begin{equation*}
g: \V \to \Light, \ \g_j:=\sigma_{ij}(\g_i)
\end{equation*}
yields a Ribaucour transform of $f$. 

This relies on the fact that any two neighbouring point pairs $f_i, f_j, g_i, g_j$ lie in the 3-dimensional subspace $\spann{\f_i, \g_i, \mathfrak{a}_{ij}} \subset \mathbb{R}^{4,2}$ and are therefore circular.

\bigskipp Conversely, any discrete Ribaucour pair of curves gives rise to a canonical evolution map:
\begin{lemdef}
For a discrete Ribaucour pair of curves there exists a unique evolution map $\sigma$ that simultaneously maps adjacent point pairs onto each other: $\sigma_{ij}(\f_i)=\f_j$ and $\sigma_{ij}(\g_i)=\g_j$ for suitable homogeneous coordinates.

We call $\sigma$ the \emph{R-evolution map} of the Ribaucour pair $(f,g)$ with corresponding \emph{R-evolution complexes}.
\end{lemdef}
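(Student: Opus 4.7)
My plan is to establish existence and uniqueness on each edge $(ij)$ separately and then assemble the global map by fixing initial representatives and propagating along the curve. The central observation is that the defining property of a Ribaucour pair---the fact that the face $(f_i,f_j,g_j,g_i)$ is circular---translates, in the light-cone model, into the statement that the four light-like representatives span only a $3$-dimensional subspace $V := \spann{\mathfrak{f}_i,\mathfrak{f}_j,\mathfrak{g}_i,\mathfrak{g}_j} \subset \mathbb{R}^{4,2}$. Consequently the two $2$-planes $U_1 := \spann{\mathfrak{f}_i,\mathfrak{f}_j}$ and $U_2 := \spann{\mathfrak{g}_i,\mathfrak{g}_j}$ are both contained in $V$, and a dimension count gives $\dim(U_1 \cap U_2) \ge 1$, with equality as soon as no two of the four face vertices coincide. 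The idea is to take $\mathfrak{a}_{ij}$ to be a generator of this intersection line.

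For existence, let $\mathfrak{a} = \lambda\mathfrak{f}_i + \mu\mathfrak{f}_j \in U_1 \cap U_2$ be nonzero. Because $\mathfrak{f}_i,\mathfrak{f}_j \in \mathfrak{p}^\perp$, the vector $\mathfrak{a}$ is automatically orthogonal to the point-sphere complex; moreover in the non-degenerate case $\lambda\mu \ne 0$, so $\inner{\mathfrak{a},\mathfrak{a}} = 2\lambda\mu \inner{\mathfrak{f}_i,\mathfrak{f}_j}$ is nonzero and the reflection $\sigma_\mathfrak{a}$ is a well-defined M-inversion. Using $\inner{\mathfrak{f}_i,\mathfrak{f}_i} = \inner{\mathfrak{f}_j,\mathfrak{f}_j} = 0$, a direct computation gives $\sigma_\mathfrak{a}(\mathfrak{f}_i) = -(\mu/\lambda)\mathfrak{f}_j$, and the analogous calculation for the other pair yields $\sigma_\mathfrak{a}(\mathfrak{g}_i) = \kappa\mathfrak{g}_j$ for some nonzero $\kappa$. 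Rescaling the homogeneous coordinates of $\mathfrak{f}_j$ and $\mathfrak{g}_j$ absorbs these constants so that $\sigma_\mathfrak{a}(\mathfrak{f}_i) = \mathfrak{f}_j$ and $\sigma_\mathfrak{a}(\mathfrak{g}_i) = \mathfrak{g}_j$ hold simultaneously as equalities; in the new coordinates, after rescaling $\mathfrak{a}$ itself, the axis takes the normalized form $\mathfrak{a}_{ij} = \mathfrak{f}_i - \mathfrak{f}_j$ demanded by the definition of an evolution complex.

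For uniqueness, if $\sigma_\mathfrak{b}$ is another M-inversion projectively mapping $f_i \mapsto f_j$ and $g_i \mapsto g_j$, then $\sigma_\mathfrak{b}(\mathfrak{f}_i) = k\mathfrak{f}_j$ for some $k \ne 0$ implies $\mathfrak{f}_i - k\mathfrak{f}_j$ is a nonzero scalar multiple of $\mathfrak{b}$, hence $\mathfrak{b} \in U_1$; analogously $\mathfrak{b} \in U_2$. Thus $\mathfrak{b}$ lies in the $1$-dimensional intersection $U_1 \cap U_2$ and is proportional to $\mathfrak{a}$, so $\sigma_\mathfrak{b} = \sigma_\mathfrak{a}$. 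Propagating edge by edge along the underlying curve via $\mathfrak{f}_j := \sigma_{ij}(\mathfrak{f}_i)$ and $\mathfrak{g}_j := \sigma_{ij}(\mathfrak{g}_i)$, starting from arbitrary initial representatives $\mathfrak{f}_0, \mathfrak{g}_0$, then produces the unique R-evolution map in the claimed normalized coordinates.

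The main subtlety will be confirming the non-degeneracy required at each step: one needs $\dim(U_1 \cap U_2) = 1$ exactly, and a non-isotropic generator of that line. Both conditions fail precisely when two of the four face vertices coincide (say $f_i = g_i$ or $f_i = g_j$ and analogues), since then $\mathfrak{f}_i$ or $\mathfrak{f}_j$ already lies in $U_2$ and the intersection either jumps in dimension or becomes isotropic. Under the standing assumption that the Ribaucour pair consists of genuinely distinct curves in general position, the construction goes through uniformly on every edge.
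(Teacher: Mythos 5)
Your argument is correct and is essentially the paper's own proof in expanded form: the paper likewise uses concircularity to get a $3$-dimensional span, normalizes the resulting linear dependence to $\mathfrak{f}_i-\mathfrak{f}_j+\mathfrak{g}_j-\mathfrak{g}_i=0$, and reads off the unique complex $\mathfrak{r}_{ij}=\mathfrak{f}_i-\mathfrak{f}_j=\mathfrak{g}_i-\mathfrak{g}_j$, which is precisely your generator of $U_1\cap U_2$. Your version merely makes the reflection computation, the rescaling, and the uniqueness step explicit.
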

\begin{proof}
Since the curve points $f_i, f_j, g_j$ and $g_i$ of any quadrilateral of a discrete Ribaucour pair are concircular, those four points lie in a 3-dimensional subspace of $\mathbb{R}^{4,2}$ and are therefore linearly dependent. In particular, there exist homogeneous coordinates such that
\begin{equation*}
\f_i - \f_j + \g_j - \g_i=0.
\end{equation*} 
As a consequence, the unique R-evolution complexes are then given by $\rr_{ij}:=\f_i - \f_j = \g_i - \g_j$.
\end{proof}
%
%

\bigskip 

%
\noindent As we will see in the following lemma, circular nets with a family of spherical curvature lines admit special types of evolution maps:
%
\begin{prop}\label{prop_spherical_evolution}
Let $f$ and $g$ be two discrete spherical curves with curve points lying on two distinct spheres $s^1, s^2 \in \Light$, that is,
\begin{equation*}
\inner{\f_i, \s^1}=\inner{\g_i, \s^2}=0 \ \ \text{for all } i \in \mathcal{V}.
\end{equation*}
If $(f, g)$ is a discrete Ribaucour pair, then all elements in the 3-dimensional subspace 
\begin{equation*}
\mathcal{M}:= \spann{\mathfrak{s}^1, \mathfrak{s}^2, \mathfrak{p}} \subset \mathbb{R}^{4,2}
\end{equation*}
provide fixed points for all M-inversions in the R-evolution map. 
\end{prop}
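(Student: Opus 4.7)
The plan is to use the explicit reflection formula for each M-inversion in the R-evolution map and show that every element of $\mathcal{M}$ is orthogonal to every R-evolution complex $\mathfrak{r}_{ij}$; since M-inversions fix exactly the orthogonal complement of their defining vector, this will give the claim.

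Recall from the Lemma and Definition above that the R-evolution complexes can be written as $\mathfrak{r}_{ij} = \mathfrak{f}_i - \mathfrak{f}_j = \mathfrak{g}_i - \mathfrak{g}_j$ for suitable homogeneous coordinates, and that by the very definition of an evolution map we have $\mathfrak{r}_{ij} \perp \mathfrak{p}$. I would then verify the three generators of $\mathcal{M}$ one by one:
\begin{itemize}
\item $\langle \mathfrak{p}, \mathfrak{r}_{ij} \rangle = 0$ is immediate from $\mathfrak{r}_{ij} \perp \mathfrak{p}$;
\item $\langle \mathfrak{s}^1, \mathfrak{r}_{ij} \rangle = \langle \mathfrak{s}^1, \mathfrak{f}_i \rangle - \langle \mathfrak{s}^1, \mathfrak{f}_j \rangle = 0$ by the sphericity assumption on $f$, using the representation $\mathfrak{r}_{ij} = \mathfrak{f}_i - \mathfrak{f}_j$;
\item $\langle \mathfrak{s}^2, \mathfrak{r}_{ij} \rangle = \langle \mathfrak{s}^2, \mathfrak{g}_i \rangle - \langle \mathfrak{s}^2, \mathfrak{g}_j \rangle = 0$ by the sphericity assumption on $g$, using the representation $\mathfrak{r}_{ij} = \mathfrak{g}_i - \mathfrak{g}_j$.
\end{itemize}
By linearity, $\langle v, \mathfrak{r}_{ij}\rangle = 0$ for every $v \in \mathcal{M}$ and every edge $(ij)$. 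Substituting into the explicit formula $\sigma_{ij}(v) = v - \tfrac{2\langle v, \mathfrak{r}_{ij}\rangle}{\langle \mathfrak{r}_{ij}, \mathfrak{r}_{ij}\rangle} \mathfrak{r}_{ij}$ yields $\sigma_{ij}(v) = v$, as required.

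Finally, it remains to justify that $\mathcal{M}$ is genuinely $3$-dimensional. The vectors $\mathfrak{s}^1, \mathfrak{s}^2$ represent distinct spheres and are therefore linearly independent; neither of them coincides (up to scale) with the point sphere complex $\mathfrak{p}$, since they carry curve points of $f$ and $g$ on them. Thus $\mathfrak{s}^1, \mathfrak{s}^2, \mathfrak{p}$ are linearly independent in $\mathbb{R}^{4,2}$.

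The only subtlety I expect is a notational one: making sure the \emph{same} homogeneous representatives that realize $\mathfrak{r}_{ij} = \mathfrak{f}_i - \mathfrak{f}_j = \mathfrak{g}_i - \mathfrak{g}_j$ are used in both orthogonality computations; but this is exactly the content of the preceding Lemma and Definition, so no extra work is needed. The proof is essentially a direct verification.
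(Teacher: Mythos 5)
Your proof is correct and follows essentially the same route as the paper: both use the normalization $\mathfrak{r}_{ij}=\mathfrak{f}_i-\mathfrak{f}_j=\mathfrak{g}_i-\mathfrak{g}_j$ from the preceding Lemma and Definition to deduce $\mathfrak{r}_{ij}\perp\mathfrak{s}^1,\mathfrak{s}^2,\mathfrak{p}$ and then invoke the fact that an inversion fixes the orthogonal complement of its defining vector. Your additional remark on the $3$-dimensionality of $\mathcal{M}$ is a harmless (and slightly informal) extra that the paper leaves implicit.
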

\begin{proof}
Due to the circularity of adjacent point pairs, for any quadrilateral we can choose homogeneous coordinates such that $\mathfrak{f}_i-\mathfrak{f}_j+\mathfrak{g}_j-\mathfrak{g}_i=0$. By taking inner products with the spheres $s_1$ and $s_2$, we obtain
\begin{equation*}
0=\inner{\f_i - \f_j,\s^2}=\inner{\g_i - \g_j,\s^1} 
\end{equation*}
and immediately conclude that $\rr_{ij}:=\f_i-\f_j=\g_i-\g_j \perp \s^1, \s^2, \p$ and the claim follows (see Fact~\ref{fact_fixedpoint}).
\end{proof}
%
%
\noindent This crucial property of R-evolution maps for spherical curves motivates the following definition:
\begin{defi}
A Ribaucour pair of two discrete curves $(f, g)$ is said to be of \emph{($\m^1,\m^2$)-type} if 
\begin{equation*}
\mathcal{M}:=\spann{\m_1, \m_2, \p} \subset \mathbb{R}^{4,2}
\end{equation*}
is a 3-dimensional subspace of fixed points for all inversions in the corresponding R-evolution map.
\end{defi} 
%
\begin{cor}\label{cor_mtype_spherical}
A discrete Ribaucour pair of $(\m^1, \m^2)$-type consists of two spherical curves.
\end{cor}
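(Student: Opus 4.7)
The approach is to realize the containing spheres as elements of the fixed subspace $\mathcal{M}$ itself, using that every R-evolution reflection is an isometry of $\mathbb{R}^{4,2}$ that fixes $\mathcal{M}$ pointwise.

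First I would translate the $(\m^1, \m^2)$-type hypothesis into an orthogonality statement. Since the M-inversion
\begin{equation*}
\sigma_{ij}(x) = x - \frac{2\inner{x, \mathfrak{a}_{ij}}}{\inner{\mathfrak{a}_{ij}, \mathfrak{a}_{ij}}} \mathfrak{a}_{ij}
\end{equation*}
has fixed-point hyperplane exactly $\mathfrak{a}_{ij}^\perp$ (by Fact~\ref{fact_fixedpoint}), the assumption that every element of $\mathcal{M}$ is fixed by every $\sigma_{ij}$ of the R-evolution map is equivalent to $\rr_{ij} \perp \mathcal{M}$ for every edge $(ij)$. With the R-evolution homogeneous coordinates $\f_i - \f_j = \rr_{ij} = \g_i - \g_j$, this already implies that, for every $s \in \mathcal{M}$, the functions $i \mapsto \inner{\f_i, s}$ and $i \mapsto \inner{\g_i, s}$ are constant along the respective curves. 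Equivalently, using that each $\sigma_{ij}$ is a self-adjoint involution with $\sigma_{ij}(s) = s$, one has $\inner{\f_j, s} = \inner{\sigma_{ij}(\f_i), s} = \inner{\f_i, s}$.

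To conclude I would exhibit $s^1 \in \mathcal{M} \cap \mathcal{L}$ with $\inner{\f_0, s^1} = 0$ (and an analogous $s^2$ for $g$); the constancy from the previous step then upgrades this to $\inner{\f_i, s^1} = 0$ for every $i$, placing $f$ on the sphere $s^1$, and likewise $g$ on $s^2$. The existence of such $s^1$ is a dimension count inside the three-dimensional space $\mathcal{M}$: the condition $\inner{\f_0, \cdot} = 0$ is a single linear equation, cutting out a two-dimensional subspace that already contains $\p$ by the point-sphere normalization $\inner{\f_0, \p} = 0$.

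The main obstacle, and the only step genuinely using the $(4,2)$-signature, is to confirm that this two-dimensional subspace of $\mathcal{M}$ actually intersects $\mathcal{L}$ in a direction representing a proper sphere, rather than being purely definite. I expect this to reduce to a short signature computation on $\mathcal{M}$, using the known norm of $\p$ and of the $\m^k$ to ensure that the restricted quadratic form is indefinite and hence admits null directions.
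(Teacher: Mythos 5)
Your outline is correct and is, in essence, the paper's argument read through the orthogonal complement. The paper takes the sphere $s$ determined by four consecutive points and notes that, since every R-evolution complex lies in the $3$-dimensional space $\mathcal{M}^\perp$, all curve points stay in the $4$-dimensional subspace $\spann{\f_0}+\mathcal{M}^\perp$ and hence remain perpendicular to $s$. Your $W:=\f_0^\perp\cap\mathcal{M}$ is exactly the orthogonal complement of that $4$-space, so the sphere you seek inside $\mathcal{M}$ is the paper's circumsphere. What your version buys is that it exhibits the sphere as an element of $\mathcal{M}$ itself, mirroring the forward direction in Proposition~\ref{prop_spherical_evolution}, and it avoids the genericity assumption that four consecutive points determine a sphere; what it costs is the null-direction existence argument in $W$, which the paper's route sidesteps.

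That existence step --- the one you flag as the main obstacle --- cannot be settled from the norms of $\p$ and the $\m^k$ alone: the $\m^k$ are unconstrained, and even the signature of $\mathcal{M}$ does not determine the signature of the $2$-dimensional slice $W$. The ingredient you actually need is that $W\subseteq\f_0^\perp$, so the null vector $\f_0$ lies in $W^\perp$: were $W$ negative definite, $W^\perp$ would be positive definite and could not contain a nonzero null vector. Since $\p\in W$ is timelike, $W$ is therefore either of signature $(1,1)$ --- in which case its two null lines are the two orientations of one genuine sphere, as neither can be orthogonal to $\p$ --- or degenerate, in which case its radical is a null line orthogonal to both $\p$ and $\f_0$, hence a point coinciding with $f_0$; this forces $\f_0\in\mathcal{M}$, so $\f_0$ is fixed by the evolution (by Fact~\ref{fact_fixedpoint}) and the curve is constant, a degeneracy you may exclude just as the paper excludes its trivial cases. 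With that substitution your argument closes.
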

\begin{proof}
Let $f$ denote one of the discrete curves of an $(\m^1, \m^2)$-type Ribaucour pair. Excluding the trivial case where all curve points lie on a circle, we can assume without loss of generality that the four consecutive curve points $f_0, f_1, f_2$ and $f_3$ uniquely determine a sphere $s \in \Light$. 

Moreover, observe that all R-evolution complexes~$r$ along the discrete curve lie in a 3-dimensional subspace $\mathcal{M}^c:=\spann{\rr_{01}, \rr_{12}, \rr_{23}} \perp \m^1, \m^2, \p$. Thus, since all curve points of $f$ are given as linear combinations of an initial point~$f_0$ and the R-evolution complexes, we conclude that all curve points lie in the 4-dimensional subspace
\begin{equation*}
\spann{\f_0, \f_1,\f_2,\f_3} = \spann{\f_0, \rr_{01}, \rr_{12}, \rr_{23}} = \spann{f_0 \cup \mathcal{M}^c} \perp s,
\end{equation*} 
which shows that $f$ is indeed a spherical curve lying on $s$.
\end{proof}
%
%
\subsection{Lifted-folding} M\"obius transformations preserve circularity between points, therefore it is clear that discrete Ribaucour pairs are invariant under those transformations. 

Additionally, Ribaucour pairs of $(\m^1,\m^2)$-type have the remarkable property that circularity is also preserved while only \emph{one} of the curves is transformed by a suitable M\"obius inversion. Generically, there exists a 1-family of such M\"obius inversions, which we describe in the next proposition:
\begin{prop}\label{prop_flexible_pair}
Let $(f,g)$ be a Ribaucour pair of $(\m^1,\m^2)$-type and suppose that $\sigma_n$ is an inversion with respect to 
\begin{equation*}
\mathfrak{n}:= \mathfrak{m}^1 + \lambda \mathfrak{m}^2 + \inner{ \mathfrak{m}^1 + \lambda \mathfrak{m}^2, \mathfrak{p}}\mathfrak{p}
\end{equation*}
for some $\lambda \in \mathbb{R}$. Then $(f, \sigma_n(g))$ is also a Ribaucour pair with the same evolution map as $(f,g)$.
\end{prop}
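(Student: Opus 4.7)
The plan is to reduce the proposition to the single orthogonality $\rr_{ij} \perp \mathfrak{n}$, which will follow transparently from the $(\m^1,\m^2)$-type hypothesis.

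First, in the homogeneous coordinates singled out by the R-evolution lemma, $\rr_{ij} = \f_i - \f_j = \g_i - \g_j$ and $\sigma_{ij}$ is the M-inversion along $\rr_{ij}$. For $(f, \sigma_n(g))$ to be a Ribaucour pair whose R-evolution map is again $\sigma$, it suffices that $\sigma_{ij}$ map $\sigma_n(\g_i)$ to $\sigma_n(\g_j)$; equivalently, that the involutions $\sigma_{ij}$ and $\sigma_n$ commute on the representative $\g_i$. By the standard criterion for commuting reflections, this reduces generically to $\sigma_n(\rr_{ij}) = \rr_{ij}$, i.e.\ to $\inner{\rr_{ij}, \mathfrak{n}} = 0$.

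The second step invokes the $(\m^1,\m^2)$-type hypothesis: since $\mathcal{M} = \spann{\m^1, \m^2, \p}$ is the fixed subspace of every inversion in the R-evolution map, each evolution complex satisfies $\rr_{ij} \perp \m^1, \m^2, \p$ (by the same argument as in the proof of Proposition~\ref{prop_spherical_evolution}). Expanding
\[
\inner{\rr_{ij}, \mathfrak{n}} = \inner{\rr_{ij}, \m^1} + \lambda \inner{\rr_{ij}, \m^2} + \inner{\m^1 + \lambda\m^2, \p}\inner{\rr_{ij}, \p}
\]
then gives $0$ at once. I would next confirm that $\sigma_n$ is a bona fide M-inversion, i.e.\ that $\mathfrak{n} \perp \p$; this is the role of the seemingly ad hoc correction term $\inner{\m^1 + \lambda\m^2, \p}\p$. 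Using the normalization $\inner{\p, \p} = -1$ of the point sphere complex one finds $\inner{\mathfrak{n}, \p} = \inner{\m^1 + \lambda \m^2, \p}(1 + \inner{\p, \p}) = 0$, so $\sigma_n$ sends $\Light$ to $\Light$ and $\sigma_n(g)$ is a well-defined discrete curve. Once $\sigma_{ij}$ serves as a common evolution inversion for both pairs, the Ribaucour property of $(f, \sigma_n(g))$ is automatic: each quadruple $\f_i, \f_j, \sigma_n(\g_i), \sigma_n(\g_j)$ lies in $\spann{\f_i, \sigma_n(\g_i), \rr_{ij}}$ and is therefore concircular.

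I do not anticipate a real obstacle; the definition of $\mathfrak{n}$ is engineered so that simultaneously $\mathfrak{n} \in \mathcal{M} + \mathbb{R}\p$ (annihilating each evolution complex) and $\mathfrak{n} \perp \p$ (making $\sigma_n$ point-preserving). The only care required is in bookkeeping the homogeneous representatives, so that the vector differences $\g_i - \g_j$ and $\sigma_n(\g_i) - \sigma_n(\g_j)$ really agree in $\mathbb{R}^{4,2}$ rather than just projectively.
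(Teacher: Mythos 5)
Your proposal is correct and follows essentially the same route as the paper: both arguments hinge on the commutation $\sigma_{r_{ij}} \circ \sigma_n = \sigma_n \circ \sigma_{r_{ij}}$, obtained from $\rr_{ij} \perp \m^1, \m^2, \p$ (which, note, is immediate from the definition of $(\m^1,\m^2)$-type together with Fact~\ref{fact_fixedpoint}, rather than needing the argument of Proposition~\ref{prop_spherical_evolution}). Your added verifications --- the explicit expansion of $\inner{\rr_{ij},\mathfrak{n}}$, the check that $\inner{\mathfrak{n},\p}=0$ so $\sigma_n$ is a genuine M-inversion, and the concircularity of the transformed quadrilaterals --- are details the paper leaves implicit, and they are all correct.
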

\begin{proof}
The proof relies on the fact that $\sigma_n$ commutes with all inversions $\sigma_r$ in the evolution map of the original Ribaucour pair $( f,g )$: $\sigma_{r_{ij}} \circ \sigma_n = \sigma_n \circ \sigma_{r_{ij}}$ for all $(ij) \in \E$. Since two inversions commute if and only if the corresponding complexes are perpendicular, this follows directly from $\mathfrak{r}_{ij} \perp \mathfrak{m}^1, \mathfrak{m}^2, \mathfrak{p}$.

Thus, we obtain that the R-evolution map $\sigma_{r}$ of $( f,g )$ also evolves the vertices of the curve~$\sigma_n(g)$: 
\begin{equation*}
\sigma_{r_{ij}} (\sigma_n (\g_i))=\sigma_n (\sigma_{r_{ij}} (\g_i)) = \sigma_n (\g_j)
\end{equation*}
for suitable homogeneous coordinates.
\end{proof}
%
%
\begin{figure}
\hspace*{0.4cm}\begin{minipage}{2cm}
\vspace*{0.5cm}\hspace*{-2cm}\\\hspace*{-1.9cm}\includegraphics[scale=0.19]{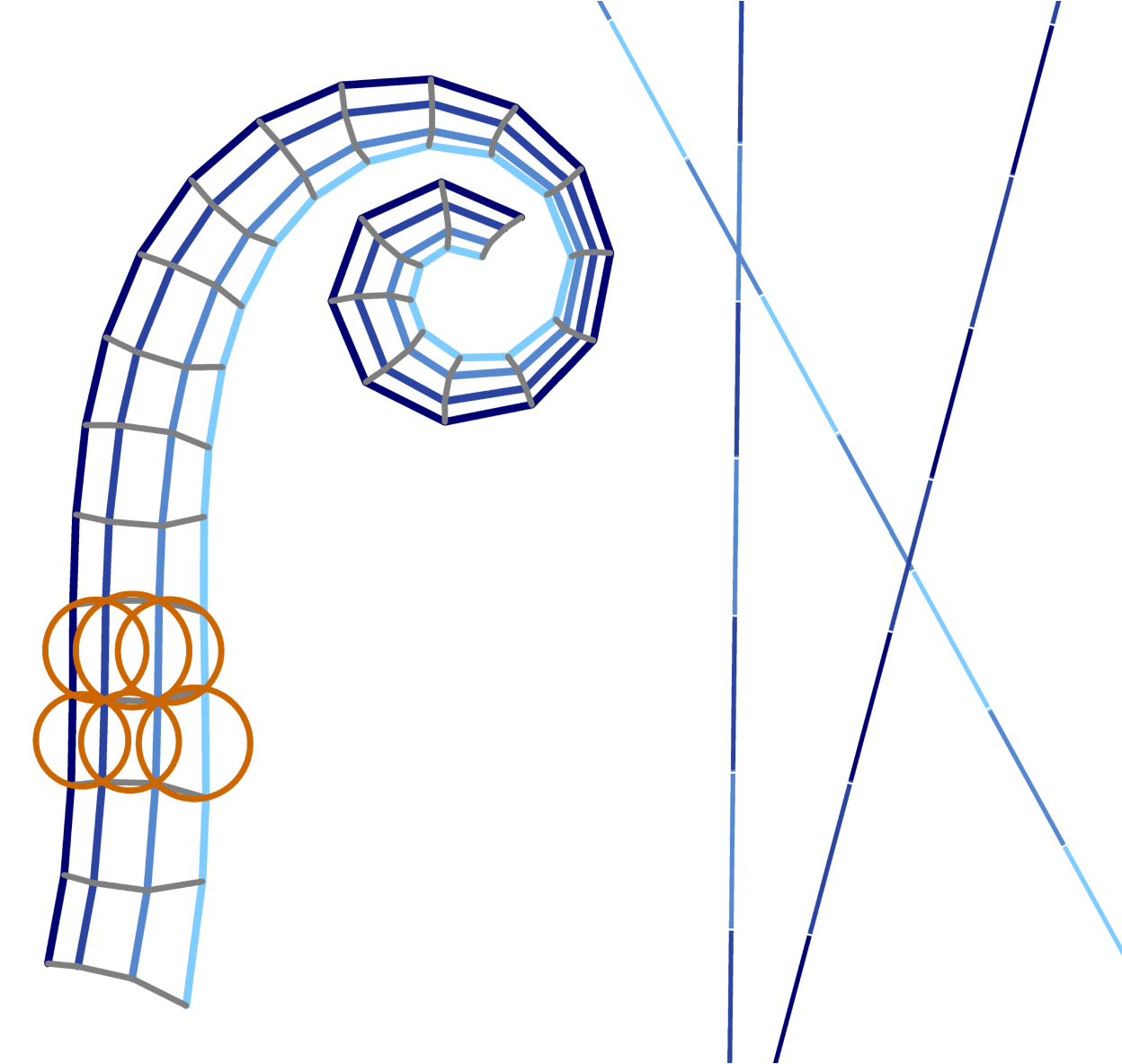}
\end{minipage}
\begin{minipage}{5cm}
\hspace*{0.1cm}\includegraphics[scale=0.6]{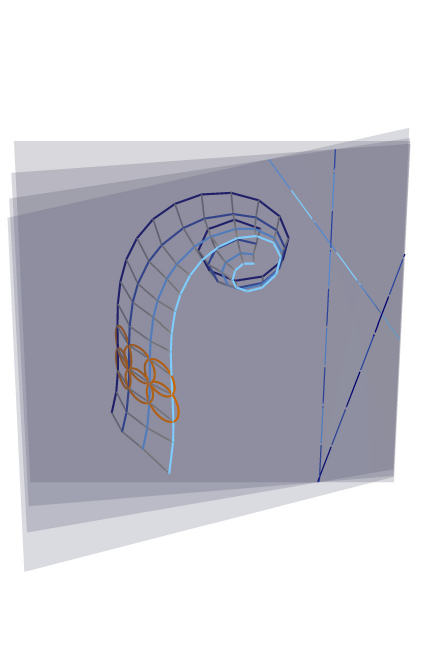}
\end{minipage}
\hspace*{-0.5cm}\begin{minipage}{4cm}
\includegraphics[scale=0.6]{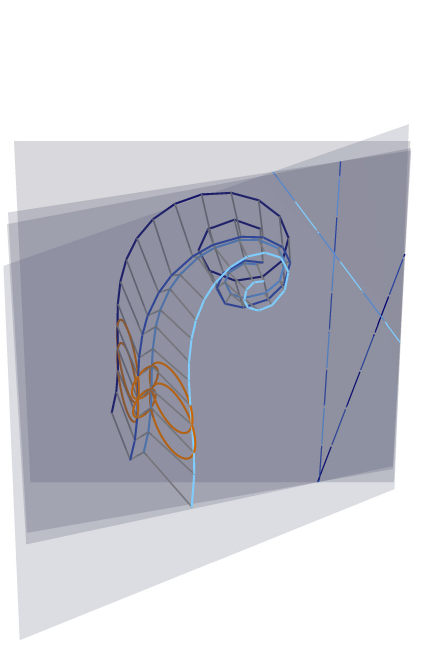}
\end{minipage}
\hspace*{0.4cm}\begin{minipage}{3.1cm}
\includegraphics[scale=0.6]{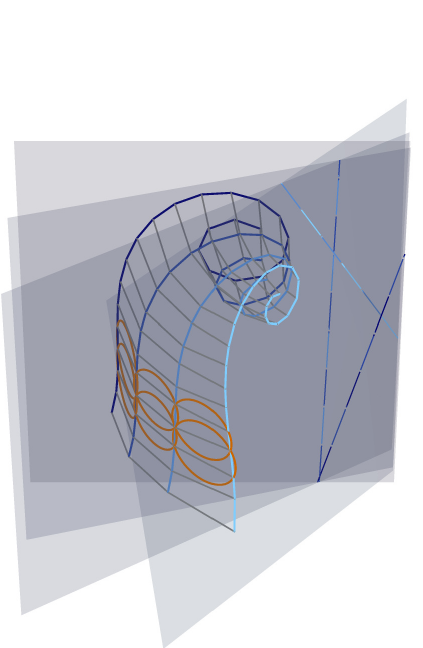}
\end{minipage}
\caption{\textit{Left}. A planar sequence of $(\m^{ij})$-type Ribaucour transformed discrete curves together with their folding axes (dashed lines). \textit{Middle and Right.} The nets in 3-space with planar curvature lines are generated from the planar net via lifted-foldings. The planes are rotated around their corresponding folding axes.}\label{fig_planar}
\end{figure}
%
%
\noindent Suppose that a discrete Ribaucour pair $(f, g)$ consists of two curves lying in distinct spheres $s^1$ and $s^2$. If the discrete curve~$g$ is transformed as specified in Proposition~\ref{prop_flexible_pair} to the curve $\sigma_n(\g)$, the obtained curve lies on the sphere $\bar{\s}:=\sigma_n(\s^2)$. By construction, this new sphere~$\bar{\s}$ lies in the subspace~$\spann{\s^1, \s^2, \p} \subset \mathbb{R}^{4,2}$ and therefore in the sphere pencil determined by the original spheres $s^1$ and $s^2$.

Note that, by varying the parameter~$\lambda \in \mathbb{R}$, the curve~$\g$ can be positioned on any oriented sphere in this pencil.
 
Thus, the following three geometric configurations can occur:
\begin{itemize}
\item \emph{elliptic pencil}: $s^1$ and $s^2$ intersect and all spheres in the pencil contain this intersection circle. 
\item \emph{parabolic pencil}: if $s^1$ and $s^2$ have exactly one point $v$ in common, then all spheres lie (up to orientation) in the contact element~$\spann{\s^1, \mathfrak{v}}$;
\item \emph{hyperbolic pencil}: if $s^1$ and $s^2$ have no points in common, then none of the spheres in the pencil have a point in common.
\end{itemize}
%
\bigskipp We now turn to the main objects of interest, namely to circular nets with a family of spherical curvature lines. By combining the above facts, we can now conclude that those nets allow for special deformations. In the following Theorem, we describe this flexibility in detail (for examples see Figure~\ref{fig_planar} and \ref{fig_spherical}). 

\bigskip
 
\noindent\textit{Regularity assumption.} In what follows, unless otherwise stated, we only consider \emph{non-degenerate} discrete circular nets with a family of spherical curvature lines. That is, we assume neighbouring curvature lines to lie on distinct spheres. Thus, we exclude totally spherical or planar patches, where the following results could obviously fail. 
\begin{thm}\label{thm_mflexible}
Suppose that $F:\V^2  \rightarrow \Light$ is a non-degenerate circular net with a family of spherical curvature lines, that is, $F=\{f^{(1)}, f^{(2)}, \cdots, f^{(j)}, \cdots \}$ is given by a sequence of consecutive Ribaucour transformed curves $f^{(j)}$ lying on pairwise distinct spheres $\{ \mathfrak{s}_1, \mathfrak{s}_2, \cdots, \mathfrak{s}_j, \cdots \}$, where we assume the normalization $\inner{\mathfrak{s}_j, \mathfrak{p}}=-1$ for all spheres. 

Then for any choice of \emph{folding parameters} $\{ \lambda_{12}, \lambda_{23}, \cdots, \lambda_{ij}, \cdots \}$, $\lambda_{ij} \in \mathbb{R}$, the net 
\begin{equation*}
\tilde{F}:=\{ f^{(1)}, \sigma_{12}(f^{(2)}), \cdots, \sigma_{jk} \circ \cdots \circ \sigma_{12} (f^{(k)}), \cdots \}  
\end{equation*}
is again a circular net with spherical curvature lines lying on the spheres $\{ \mathfrak{s}_1, \mathfrak{\bar{s}}_2, \cdots, \mathfrak{\bar{s}}_j, \cdots \}$, where the inversions $\sigma_{jk}$ are given with respect to the linear complexes
\begin{equation*}
\begin{aligned}
\mathfrak{n}_{12}:= \mathfrak{s}_1 + \lambda_{12}\mathfrak{s}_2 - & (1+\lambda_{12})\mathfrak{p},
\\ \mathfrak{n}_{jk}:=  \mathfrak{\bar{s}}_j + \lambda_{jk}\mathfrak{\hat{s}}_k - & (1+\lambda_{jk})\mathfrak{p}, 
\\\text{with } \mathfrak{\bar{s}}_j:=\sigma_{ij} \circ \cdots \circ \sigma_{12} (\mathfrak{s}_{j})  \text{ and } & \mathfrak{\hat{s}}_k:=\sigma_{ij} \circ \cdots \circ \sigma_{12} (\mathfrak{s}_{k}).
\end{aligned}
\end{equation*}
A deformation of this kind is said to be a \emph{lifted-folding}; the M-inversions~$\sigma$ are called \emph{folding inversions}.
\end{thm}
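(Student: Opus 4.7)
The plan is to proceed by induction on the index~$k$ of the curves in the stripe, using Proposition~\ref{prop_flexible_pair} as the engine at each step. The base case $k=1$ is immediate: $f^{(1)}$ is left untouched and lies on $s_1 = \bar{s}_1$. For the inductive step, abbreviate $\Sigma_k := \sigma_{k-1,k} \circ \cdots \circ \sigma_{12}$ (with $\Sigma_1$ the identity), so that the $k$-th curve of $\tilde F$ is $\Sigma_k(f^{(k)})$. I would assume inductively that the first $k$ curves form a non-degenerate circular substripe of the required type, with the $j$-th curve lying on $\bar{s}_j := \Sigma_j(s_j)$ and satisfying $\inner{\bar{s}_j,\p} = -1$.

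To extend the stripe by one curve, I would first apply the accumulated composition~$\Sigma_k$ --- a product of M-inversions, and in particular a M\"obius transformation --- to the original Ribaucour pair $(f^{(k)}, f^{(k+1)})$. Because M-inversions preserve circularity of points, the image $(\Sigma_k(f^{(k)}),\Sigma_k(f^{(k+1)}))$ is again a Ribaucour pair, now of two spherical curves lying on the distinct spheres $\bar{s}_k$ and $\hat{s}_{k+1} := \Sigma_k(s_{k+1})$, with distinctness following from the non-degeneracy hypothesis. The normalization $\inner{\hat{s}_{k+1},\p} = -1$ survives because every folding complex produced by Proposition~\ref{prop_flexible_pair} is perpendicular to~$\p$ by construction, so each inversion in~$\Sigma_k$ preserves inner products with~$\p$. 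Proposition~\ref{prop_spherical_evolution} then identifies this transported pair as being of $(\bar{s}_k,\hat{s}_{k+1})$-type.

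At this point I would invoke Proposition~\ref{prop_flexible_pair} once, with folding parameter $\lambda_{k,k+1}$: the resulting inversion is precisely the $\sigma_{k,k+1}$ stated in the theorem, and the proposition asserts that $(\Sigma_k(f^{(k)}), \sigma_{k,k+1}(\Sigma_k(f^{(k+1)})))$ remains a Ribaucour pair with the same R-evolution map. Since $\sigma_{k,k+1} \circ \Sigma_k = \Sigma_{k+1}$, this is exactly the pair of consecutive curves of~$\tilde F$; its second curve lies on $\sigma_{k,k+1}(\hat{s}_{k+1}) = \bar{s}_{k+1}$, and its normalization is preserved by the same $\p$-perpendicularity argument. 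Thus every quadrilateral between the $k$-th and $(k+1)$-th curves of~$\tilde F$ is circular and the claim about the new sphere holds, closing the induction.

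The main obstacle I expect is purely bookkeeping: one must keep straight which spheres and which curves have already been acted on by partial compositions of folding inversions, so that the hypotheses of Proposition~\ref{prop_flexible_pair} are fed the current (already-folded) spheres $\bar{s}_k,\hat{s}_{k+1}$ in the roles of $\m^1,\m^2$, rather than the original $s_k,s_{k+1}$. Once this is disentangled, the theorem reduces to a single application of Proposition~\ref{prop_flexible_pair} inside each inductive step, and the freedom in choosing each $\lambda_{ij} \in \mathbb{R}$ independently is built in automatically.
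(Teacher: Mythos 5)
Your proposal is correct and follows exactly the route the paper takes: the paper's own proof is the one-line statement that the claim follows from Proposition~\ref{prop_spherical_evolution} and Proposition~\ref{prop_flexible_pair}, and your induction with the accumulated compositions $\Sigma_k$ is precisely the bookkeeping that one-liner leaves implicit. The details you supply (transporting the $k$-th original Ribaucour pair by $\Sigma_k$, identifying it as $(\bar{s}_k,\hat{s}_{k+1})$-type, and checking that $\mathfrak{n}_{jk}\perp\p$ preserves the normalization) are all consistent with the paper's setup.
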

%
%
\begin{proof}
The claim follows from Proposition~\ref{prop_spherical_evolution} and \ref{prop_flexible_pair}.
\end{proof}
%
%
\begin{rem}\label{rem_alt_folding}
By Proposition~\ref{prop_spherical_evolution}, a circular net with a family of spherical lines of curvature is given by a sequence of $(\m^1, \m^2)$-type Ribaucour transforms. This fact gives an alternative description of the folding inversions described in Theorem~\ref{thm_mflexible}. Those are inversions in the linear complexes
\begin{equation*}
\mathfrak{n}_{ij}:= \mathfrak{m}_{ij}^1 + \tilde{\lambda}_{ij} \mathfrak{m}_{ij}^2 + \inner{ \mathfrak{m}_{ij}^1 + \tilde{\lambda}_{ij} \mathfrak{m}_{ij}^2, \mathfrak{p}}\mathfrak{p},
\end{equation*}
where $\tilde{\lambda}_{ij} \in \mathbb{R}$.

This reformulation extends the concept of lifted-folding also to sequences of $(\m_{ij}^{(1)},\m_{ij}^{(2)})$-type Ribaucour transforms on a 2-sphere.
\end{rem}
%
%
\noindent For circular nets with lines of curvature in Euclidean planes, the concept of lifted-foldings mimics the smooth construction as described in the introduction. For non-planar curvature lines it can be understood as a novel generalization of this approach to space form geometry, which we will explain below. This view is also the reason for the term “lifted-folding”.

\bigskipp As stated in the introduction, any smooth surface with a family of planar lines of curvature comes with a canonical developable surface enveloped by the planes containing the curvature lines. Lifted-foldings are then induced by generator-preserving isometric deformations of this developable surface. In the discrete setup, the geometric idea is analogous: the discrete family of planes determines a semi-discrete developable surface that has as smooth generators the intersection line of two adjacent planes. A lifted-folding then deforms the semi-discrete developable surface and the circular net with planar curvature lines simultaneously. Note that the folding inversions are rotations around the intersection lines and are therefore indeed Euclidean motions.

If two adjacent planes are parallel, this geometric interpretation fails as in the smooth case.

\bigskipp For circular nets with spherical curvature lines we obtain locally a similar interpretation in space forms. Recall from the Appendix that a space form~$\Q$ is determined by a space form vector $\q \in \mathbb{R}^{4,2}$ that satisfies $\q \perp \p$. Moreover, any sphere $s \in \Light$ with $\inner{\s, \q}=0$ has vanishing sectional curvature in this space form $\Q$. Thus, for any four consecutive spheres $s_1, s_2, s_3, s_4 \in \Light$ there exists a space form vector $\q \perp \p, \s_1, \s_2, \s_3, \s_4$ such that these four spheres have vanishing sectional curvature in the corresponding space form~$\Q$. If the consecutive spheres pairwise intersect, their intersection circles give rise to a semi-discrete surface which envelopes spheres with vanishing sectional curvature \cite{discrete_channel} and this channel surface can therefore be viewed as developable in $\Q$. The folding inversions are then isometries in $\Q$, since $\mathfrak{n}_{ij}\in \spann{\s_i,\s_j, \p} \perp \q$. 

In case the spheres do not intersect, the enveloped semi-discrete channel surface becomes imaginary.
%
%
\begin{figure}
\hspace*{1cm}\begin{minipage}{2cm}
\hspace*{-2cm}\hspace*{-3.6cm}\vspace*{0.8cm}\includegraphics[scale=0.22]{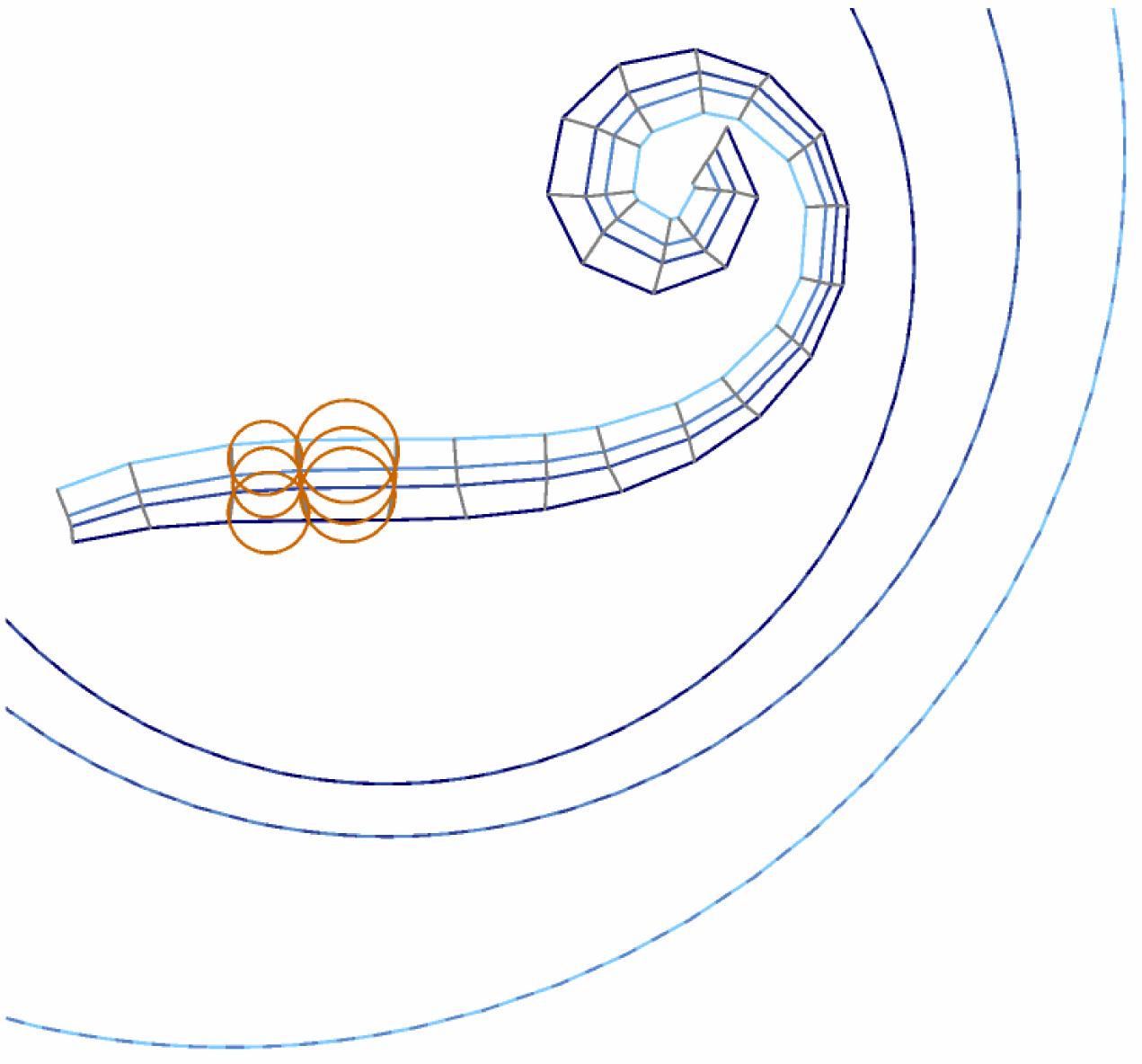}
\end{minipage}
\begin{minipage}{5cm}
\vspace*{-1.8cm}\hspace*{-1.7cm}\includegraphics[scale=0.3]{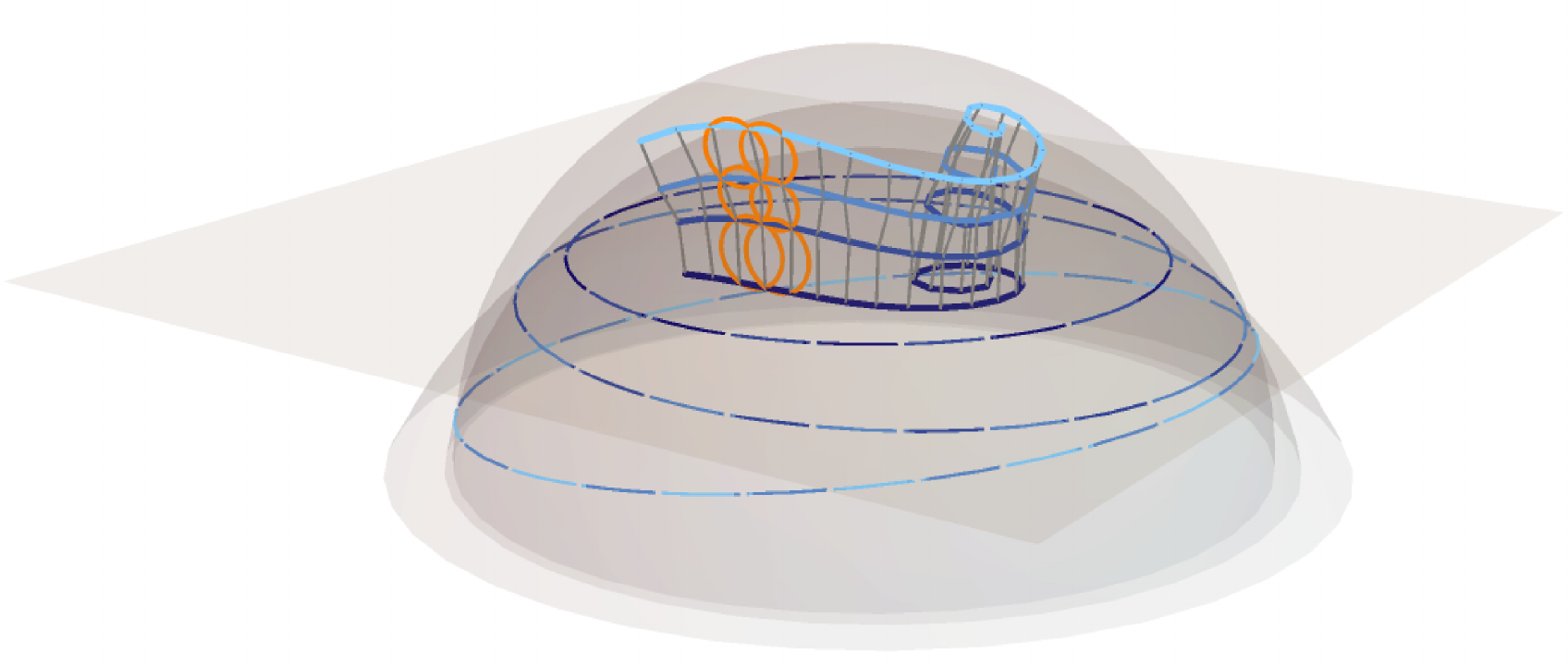}
\end{minipage}
\\\vspace*{-1.9cm}\begin{minipage}{5cm}
\hspace*{0.6cm}\vspace*{2cm}\hspace*{0.9cm}\includegraphics[scale=0.22]{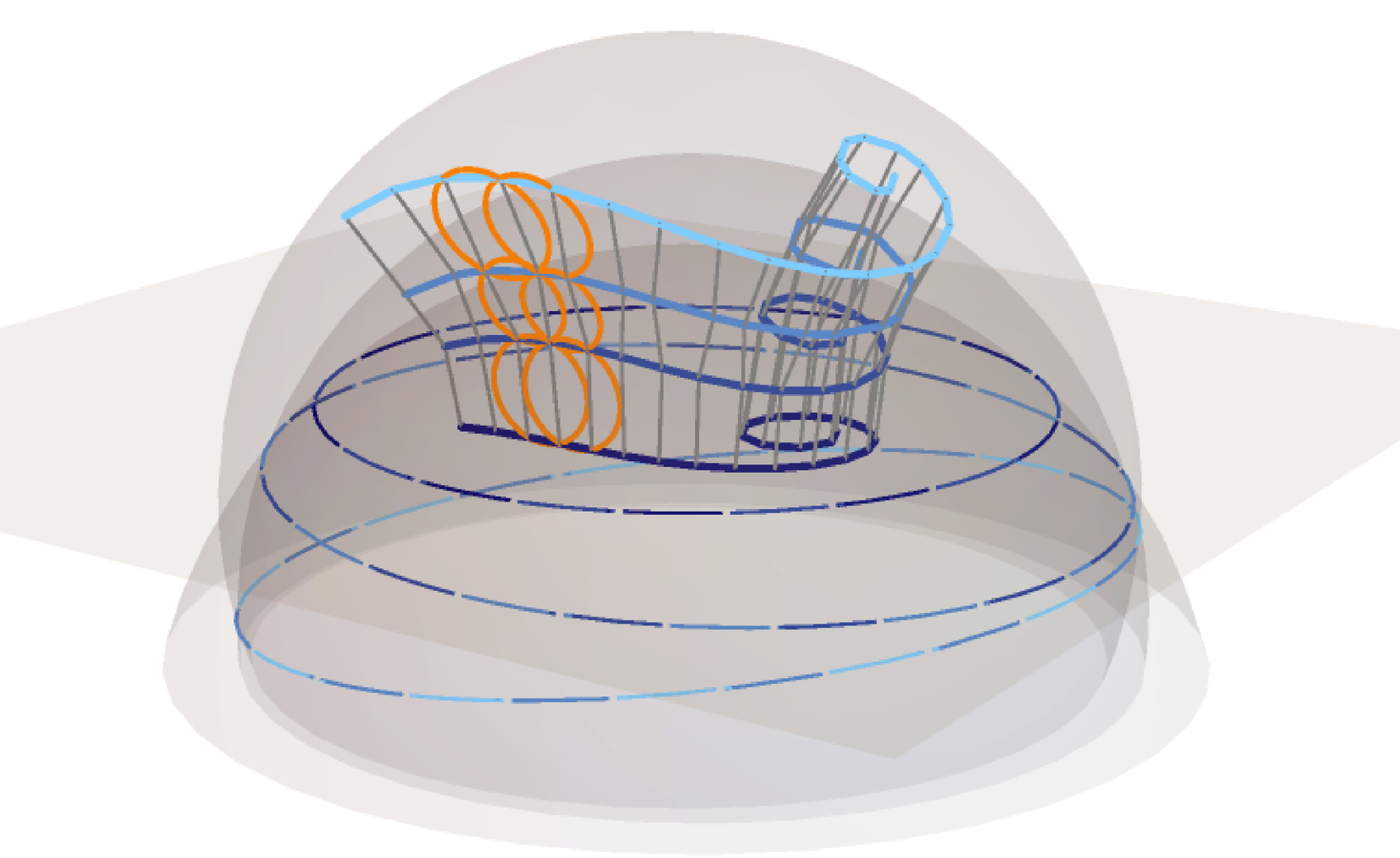}
\end{minipage}
\hspace*{3.1cm}
\begin{minipage}{5cm}
\vspace*{-1.7cm}\includegraphics[scale=0.24]{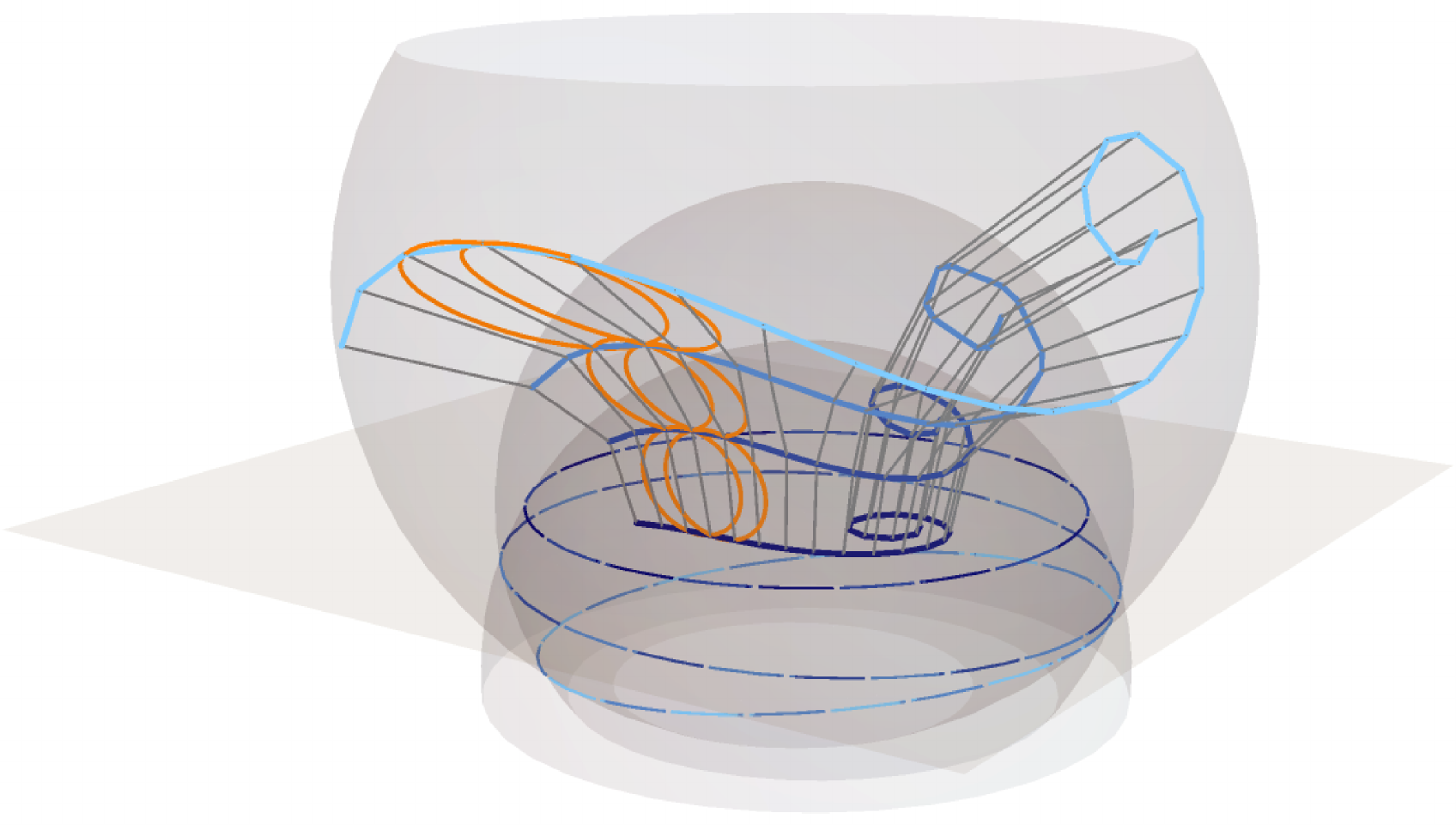}
\end{minipage}
\vspace*{-1.7cm}\caption{Circular nets with a family of spherical lines of curvature obtained from a planar circular net of $(M)$-type via lifted-foldings.}\label{fig_spherical}
\end{figure}
%
%
\subsection{Spherical curvature lines and 2-dimensional circular nets}\label{subsect_planar}
Lifted-foldings, as described in Theorem~\ref{thm_mflexible}, provide a way to trace back the geometric information about spherical parameter lines of a net in 3-space to a unique sphere: if the folding parameters are chosen as $\lambda_{ij}:=-1$, all spherical curvature lines are mapped to a single sphere. 

Conversely, Proposition~\ref{prop_flexible_pair} and Remark~\ref{rem_alt_folding} allow us to reconstruct circular nets with a family of spherical curvature lines in 3-space from circular nets on a 2-sphere $s$ that are composed of $(\s, \m^{(ij)})$-type Ribaucour transforms. 

\bigskip

In summary:   
%
\begin{cor}\label{cor_unfolding}
A non-degenerate circular net with a family of spherical curvature lines admits a lifted-folding to a circular net on a fixed sphere~$s$. The resulting curves on~$s$ are then M\"obius transformations of the original spherical parameter lines.

Conversely, any sequence of $(\s, \m^{(ij)})$-type Ribaucour transformed curves on a fixed 2-sphere~$s \in \Light \subset \mathbb{P}(\mathbb{R}^{4,2})$ gives rise to nets in 3-space with a family of spherical lines of curvature via lifted-folding. 
\end{cor}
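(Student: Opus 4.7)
The plan is to deduce both directions as direct consequences of Theorem~\ref{thm_mflexible} and its reformulation in Remark~\ref{rem_alt_folding}, with the only substantive work being a short null-vector computation in the Lie quadric that anchors the forward direction.

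For the forward direction, I apply Theorem~\ref{thm_mflexible} to the given net~$F$ with the specific choice of folding parameters $\lambda_{ij}:=-1$. Since spheres $\mathfrak{s}\in\Light$ are null vectors normalized by $\inner{\mathfrak{s},\mathfrak{p}}=-1$, a direct calculation shows that for $\lambda=-1$ the folding inversion in $\mathfrak{n}_{jk}=\bar{\mathfrak{s}}_j-\hat{\mathfrak{s}}_k$ satisfies $\sigma_{jk}(\hat{\mathfrak{s}}_k)=\bar{\mathfrak{s}}_j$, so the two spheres are swapped. Because M-inversions are orthogonal transformations fixing~$\mathfrak{p}$, both the null condition and the normalization propagate through the iteration, so the swap continues to hold at every step. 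Induction on~$k$ then yields $\bar{\mathfrak{s}}_k=\mathfrak{s}_1=:\mathfrak{s}$ for all~$k$, and hence the lifted-folded net~$\tilde F$ lies entirely on the single sphere~$\mathfrak{s}$. Since the $k$-th curve of~$\tilde F$ is the image of $f^{(k)}$ under the composition $\sigma_{k-1,k}\circ\cdots\circ\sigma_{12}$ of M-inversions, i.\,e.\,under a M\"obius transformation, the resulting curves on~$\mathfrak{s}$ are M\"obius transforms of the original spherical parameter lines.

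For the converse, I invoke Remark~\ref{rem_alt_folding}, which extends lifted-folding to sequences of $(\mathfrak{m}^1_{ij},\mathfrak{m}^2_{ij})$-type Ribaucour transforms on a fixed 2-sphere. Specializing to $\mathfrak{m}^1_{ij}=\mathfrak{s}$ and $\mathfrak{m}^2_{ij}=\mathfrak{m}^{(ij)}$, the folding inversions are the M-inversions in $\mathfrak{n}_{ij}=\mathfrak{s}+\tilde{\lambda}_{ij}\mathfrak{m}^{(ij)}+\inner{\mathfrak{s}+\tilde{\lambda}_{ij}\mathfrak{m}^{(ij)},\mathfrak{p}}\mathfrak{p}$. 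Applying these iteratively, Proposition~\ref{prop_flexible_pair} guarantees that each pair of adjacent stripes remains a Ribaucour pair, so the resulting net is circular; moreover, the transformed $k$-th curve lies on a sphere in the pencil $\spann{\mathfrak{s},\mathfrak{m}^{(k-1,k)},\mathfrak{p}}$, giving a circular net in 3-space with spherical parameter lines as desired.

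The main delicate point is the identity $\sigma_{jk}(\hat{\mathfrak{s}}_k)=\bar{\mathfrak{s}}_j$ used in the forward direction: it relies crucially on both spheres being null vectors with identical $\mathfrak{p}$-normalization, and it is what singles out $\lambda=-1$ among all folding parameters. A secondary issue is the tacit use of non-degeneracy: at each step one needs $\bar{\mathfrak{s}}_j\neq\hat{\mathfrak{s}}_k$ so that $\mathfrak{n}_{jk}\neq0$ and the M-inversion is well defined, which is guaranteed by the assumption that consecutive curvature spheres of~$F$ are distinct together with the fact that M\"obius transformations preserve distinctness.
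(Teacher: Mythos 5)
Your proposal is correct and follows essentially the same route as the paper, which presents this corollary as a summary of the preceding discussion: the forward direction is Theorem~\ref{thm_mflexible} with folding parameters $\lambda_{ij}=-1$ (so that $\mathfrak{n}_{jk}=\bar{\mathfrak{s}}_j-\hat{\mathfrak{s}}_k$ swaps the two null vectors and all spheres collapse to $\mathfrak{s}_1$), and the converse is Proposition~\ref{prop_flexible_pair} together with Remark~\ref{rem_alt_folding} applied with $\mathfrak{m}^1_{ij}=\mathfrak{s}$. Your explicit verification of the sphere-swapping identity and the induction is more detailed than the paper's own (essentially omitted) argument, and is sound.
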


In particular, since our constructions are all invariant under a global M\"obius transformation, for our further analysis it is sufficient to consider planar circular nets composed of $(\s, \m^{(ij)})$-type Ribaucour transforms.   

Thus, to describe them we consider now discrete curves in a plane which are modelled in $\mathbb{R}^{3,2}$. The notion of $(\m^1,\m^2)$-type Ribaucour transformations simplifies accordingly:
\begin{defi}
A discrete Ribaucour pair in $\mathbb{R}^{3,2}$ is of \emph{$(\m)$-type} if all elements in the 2-dimensional subspace $\mathcal{M}:=\spann{\m, \p} \subset \mathbb{R}^{3,2}$ are fixed points of all inversions in the corresponding R-evolution map.

We say that a planar circular net $G$ is of \emph{$(M)$-type} if it is given by a sequence $G=\{ g^{(1)}, g^{(2)}, \cdots \}$ of $(\m^{(ij)})$-type Ribaucour transforms.
\end{defi} 
\noindent Note that, by using a canonical embedding as discussed in the Appendix, a circular net of $(M)$-type in $\mathbb{R}^{3,2}$ can be embedded in $\mathbb{R}^{4,2}$. It then provides a planar circular net that admits lifted-foldings in 3-space.


%
%
\bigskipp If the curve points of an $(\m)$-type  Ribaucour pair~$(f,g)$ in $\mathbb{R}^{3,2}$ satisfy $\f_i, \g_i \notin \spann{\m}^\perp$, then in this case the R-evolution complexes simplify to  
\begin{equation}\label{equ_evolution_m}
r_{ij}:=\spann{\f_i \inner{\f_j, \m} - \f_j \inner{\f_i, \m}}=\spann{\g_i \inner{\g_j, \m} - \g_j \inner{\g_i, \m}}.
\end{equation}
This relies on the fact that the M-inversions in the evolution map interchange adjacent curve points and fix the vector $\m \in \mathbb{R}^{3,2}$ (see Fact~\ref{fact_inversion_complex}).
%
%

\bigskipp Planar circular nets of $(M)$-type in $\mathbb{R}^{3,2}$, given by a sequence $\{g^{(i)}\}_{i \in \hat{\mathcal{V}}}$ of discrete curves $\V \ni t \mapsto g_t$, are determined by the following initial data:
\begin{itemize}
\item a planar discrete curve $\V \ni t \mapsto g_t$, which then becomes a spherical curvature line under lifted-folding;
\\[-4pt]\item a planar discrete curve $\hat{\V} \ni i \mapsto h^{(i)}$ that coincides with $g$ at their intersection point: $g_0=h^{(0)}$; this curve provides initial points for the Ribaucour transforms;
\\[-4pt]\item a family $\hat{\E} \mapsto \m^{(ij)}$ of vectors in $\mathbb{R}^{3,2} \setminus \spann{\mathfrak{p}}$
\end{itemize}
The circular net $G=\{ g^{(i)} \}_{i \in \hat{V}}$ of $(M)$-type is then obtained iteratively via
\begin{equation*}
\begin{aligned}
\mathfrak{g}_t^{(0)}&:=\mathfrak{g}_t,
\\[6pt]\mathfrak{g}_{0}^{(j)}&:=\mathfrak{h}^{(j)} \ \text{and } \mathfrak{g}_{t+1}^{(j)}:=\sigma_a(\mathfrak{g}_{t}^{(j)}), \ \text{where } \mathfrak{a}:=\mathfrak{g}_t^{(i)}\inner{\mathfrak{g}_{t+1}^{(i)},\mathfrak{m}^{(ij)}} - \mathfrak{g}_{t+1}^{(i)}\inner{\mathfrak{g}_t^{(i)},\mathfrak{m}^{(ij)}}.
\end{aligned}
\end{equation*}

\bigskipp To conclude this section we show how the two subclasses of discrete surfaces with a family of planar curvature lines and discrete Joachimsthal surfaces are obtained via lifted-folding:
%
%
\\\\$\bullet$ \textbf{Nets with a family of planar parameter lines.} Smooth surfaces with one or two families of planar lines of curvature is a classical topic in differential geometry that has been studied from various perspectives (see for example~\cite{blaschke, bobenko2023isothermic, Darboux_planar, musso_nic}). 

Recent interest in discrete counterparts comes originally from architectural applications:  the authors in \cite{TELLIER2019102880} generate circular nets with planar parameter lines via Combescure transformations of suitable Gauss maps, while in \cite{planar_pottmann} a Laguerre geometric approach has been used to design those nets. From a purely geometric point of view, in \cite{bobenko2023circular} circular nets with planar parameter lines are studied using projective geometry. This work provides in particular insights into possible plane configurations for the special case that both families are planar.

\bigskipp The concept of lifted-folding becomes especially simple in the case of planar curvature lines. An example illustrating the geometric situation can be found in Figure~\ref{fig_planar}. First assume that two consecutive planes containing the parameter lines intersect in a line. Then a lifted-folding amounts to a rotation of one of the curves around this intersect line.  If the two planes are parallel, lifted-folding changes the distance between those parallel planes. 

From these observations it is clear that lifted-foldings deform nets with planar curvature lines within this class. Therefore, there are distinguished planar circular nets of $(M)$-type that give rise to all such nets in 3-space (see Figure~\ref{fig_planar}, \emph{left}):
\begin{cor}
Let $\q:=\q_0 \perp \p$ be a space form vector that determines a 2-dimensional Euclidean space form. Further, suppose that $G$ is a planar circular net of $(M)$-type given by a sequence of $(\m^{(ij)})$-type Ribaucour transforms. Lifted-foldings of the planar circular net $G$ create circular nets with a family of planar parameter lines if and only if all vectors $\m^{(ij)}$ satisfy $\inner{\m^{(ij)}, \q}=0$.
\end{cor}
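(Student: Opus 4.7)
The plan is to determine precisely when the sequence of spheres $\bar{\s}_1, \bar{\s}_2, \ldots$ carrying the parameter lines of the lifted-folded net are all planes of a common Euclidean 3-space. A sphere $\bar{\s}$ is a plane in the space form $\Q$ determined by $\q$ exactly when $\inner{\bar{\s},\q}=0$, so the task reduces to tracking $\inner{\bar{\s}_j,\q}$ along the iterated folding inversions.

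The first observation is that $\bar{\s}_1$ is the embedding sphere $s$ on which $G$ sits by Corollary~\ref{cor_unfolding}; since $G$ lives in the 2-dimensional Euclidean space form determined by $\q$, this embedding sphere satisfies $\inner{s,\q}=0$. By Remark~\ref{rem_alt_folding}, the folding inversions generating the sequence $\bar{\s}_j$ are inversions in the linear complexes
\[
\n_{ij}=s+\tilde{\lambda}_{ij}\,\m^{(ij)}+\inner{s+\tilde{\lambda}_{ij}\,\m^{(ij)},\p}\,\p.
\]
Using $\q\perp\p$ together with $\inner{s,\q}=0$, a one-line expansion gives
\[
\inner{\n_{ij},\q}=\tilde{\lambda}_{ij}\inner{\m^{(ij)},\q}.
\]

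For the $(\Leftarrow)$ direction, if $\inner{\m^{(ij)},\q}=0$ for every edge, then every $\n_{ij}$ is orthogonal to $\q$, so each folding inversion fixes $\q$ and preserves the hyperplane $\q^\perp$. Starting from $s\in\q^\perp$, the iteratively generated spheres $\bar{\s}_j$ therefore all satisfy $\inner{\bar{\s}_j,\q}=0$ and are planes in $\Q$, forcing each transformed parameter line to be planar.

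For the $(\Rightarrow)$ direction, I would exploit the freedom in the folding parameters. If lifted-foldings produce circular nets with planar parameter lines in a fixed Euclidean 3-space for all choices of $\tilde{\lambda}_{ij}$, then $\inner{\sigma_{\n_{ij}}(\bar{\s}_j),\q}=0$ must hold identically in $\tilde{\lambda}_{ij}$; combining this with $\inner{\bar{\s}_j,\q}=0$ (inductively) and the identity $\inner{\n_{ij},\q}=\tilde{\lambda}_{ij}\inner{\m^{(ij)},\q}$, comparing coefficients in $\tilde{\lambda}_{ij}$ forces $\inner{\m^{(ij)},\q}=0$.

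The main subtlety will be the bookkeeping involved in passing from the planar net in $\mathbb{R}^{3,2}$ to its lifted counterpart in $\mathbb{R}^{4,2}$ via the canonical embedding of the Appendix, and checking that the 2-dimensional Euclidean space form vector $\q$ agrees, up to the natural identification, with the 3-dimensional Euclidean space form vector used to detect planarity after lifted-folding. Once that identification is in place, the two displayed computations deliver both implications directly.
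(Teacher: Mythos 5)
Your proposal is correct and follows essentially the same route as the paper: the paper's (very terse) proof also rests on the observation that the folding inversions are Euclidean motions — i.e.\ fix the space form vector $\q$ and hence preserve planes — exactly when $\m^{(ij)}\perp\q$, which is precisely your computation $\inner{\mathfrak{n}_{ij},\q}=\tilde{\lambda}_{ij}\inner{\m^{(ij)},\q}$. You simply make explicit what the paper leaves implicit (the induction over the transformed spheres and the coefficient comparison in $\tilde{\lambda}_{ij}$ for the converse), so there is no substantive difference in approach.
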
 
\begin{proof}
Note that during lifted-foldings the M-inversions applied to the discrete curves of the planar net~$G$ are Euclidean motions in 3-space if and only if $\m^{(ij)} \perp \q$. Together with Theorem~\ref{thm_mflexible} this leads to the assertion.
\end{proof}
%
$\bullet$ \textbf{Discrete Joachimsthal surfaces.} A smooth surface with a family of curvature lines lying in planes that intersect in a common line are classically called Joachimsthal surfaces. As a consequence, the other family of curvature lines lie on spheres that have their centers on this line. Moreover, the planar curvature lines are then orthogonal trajectories to those spheres (see for example~\cite{eisenhart}). 
Suppose that all planar curvature lines are rotated around this line to lie in one plane (which corresponds to a degenerate lifted-folding). Then the curves are orthogonal trajectories to a 1-parameter family of circles with centers on this rotational axis, that is, the 2-dimensional orthogonal coordinate system given by the planar rotated curvature lines together with the family of circles provides a 2-dimensional cyclic system \cite{salkowski}.   

Discrete analogs of the latter have been introduced in~\cite{discrete_cyclic}: given a family of circles $(c_i)_i \in \Light \subset \mathbb{R}^{3,2}$ with centers on a line $l \in \Light$, $\mathfrak{l} \perp \q_0$. Any discrete sampling on an initial circle propagates via the M-inversions with respect to the linear complexes
\begin{equation*}
\mathfrak{a}_{ij}:=\mathfrak{c}_i \inner{\mathfrak{c}_j, \p} - \mathfrak{c}_j\inner{\mathfrak{c}_i, \p} 
\end{equation*} 
to a planar circular net of $(\mathfrak{l})$-type. Lifted-folding then provides discrete Joachimsthal surfaces in the sense that the family of planar curvature lines is composed of discrete orthogonal trajectories to a family of spheres with center on the line $l$ (see~Figure~\ref{fig_joachimsthal}). 
\begin{figure}
\includegraphics[scale=0.25,angle=90,origin=c]{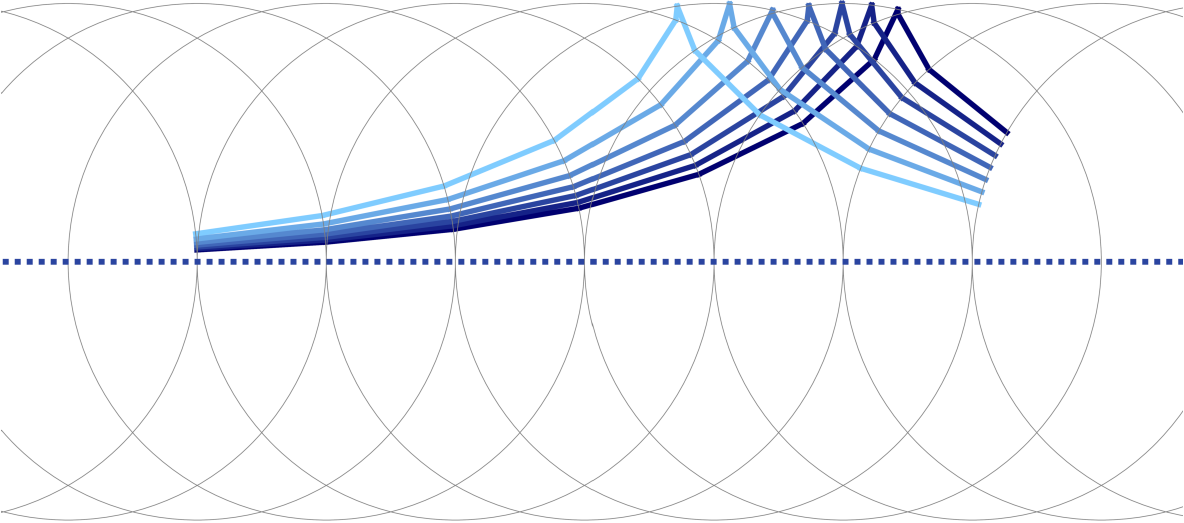}
\hspace*{0.8cm}\includegraphics[scale=0.3]{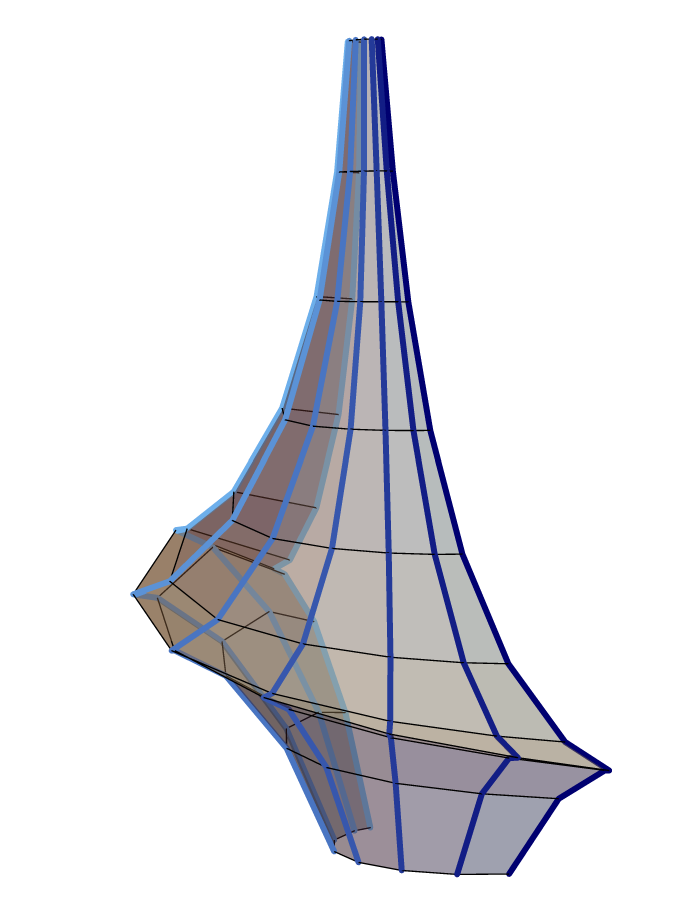}
\hspace*{0.8cm}\includegraphics[scale=0.4]{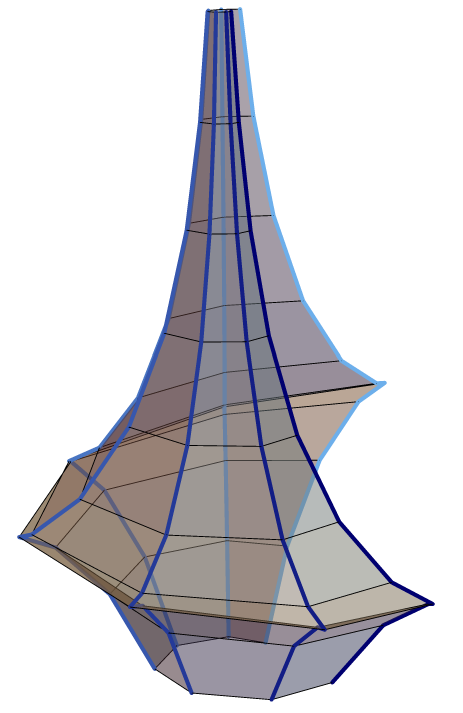}
\caption{Discrete Joachimsthal surfaces generated via lifted-folding from discrete orthogonal trajectories of a 2-dimensional discrete cyclic system~\cite{discrete_cyclic}.}\label{fig_joachimsthal}
\end{figure}
%
%
\section{Discrete isothermic surfaces with a family of spherical curvature lines}\label{sect_iso}
Smooth isothermic surfaces are characterized by the existence of conformal curvature line coordinates.
The following integrable discrete analogs to isothermic surfaces are well-studied: while discrete curvature line coordinates are modelled via circular nets, discrete conformality is mimicked by a constant cross-ratio for all quadrilaterals \cite{BobenkoPinkalliso}. A slight generalization, including admissible curvature line preserving reparametrizations, requires the existence of a consistent edge-labelling $\varepsilon:  \mathcal{E}^2 \to \mathbb{R}$ such that the cross-ratio of any quadrilateral factorizes: 
\begin{equation*}
\text{cr}(f_i, f_j, f_k, f_l)=\frac{\varepsilon_{ij}}{\varepsilon_{kl}}.
\end{equation*}

In this section we characterize discrete isothermic surfaces with a family of planar or spherical curvature lines. We restrict our study to  edge-labellings that are constant in the direction of the spherical coordinate lines. Geometrically, this means that the cross-ratios of the faces bounded by two adjacent spherical curvature lines are constant. 

As we will see below, this additional assumption on the edge-labels interplays well with the concept of lifted-folding. It will simplify our arguments and will place special emphasis on the geometric ideas. 

Thus, we use the following adapted definitions: 

\begin{defi}
Two discrete curves related by a Ribaucour transformation are called a \emph{discrete Darboux pair} if the cross-ratios of all circular faces are constant. A sequence of Darboux transformed curves in a fixed plane is said to be a \emph{discrete holomorphic map}.

A circular net with a family of spherical curvature lines is \emph{isothermic} if the spherical parameter lines are given by a sequence of discrete Darboux transforms. 
\end{defi}
%
%
\subsection{Lifted-foldings of isothermic nets}\label{subsect_iso} In this subsection we investigate the behaviour of discrete isothermic surfaces with a family of spherical curvature lines under the deformations induced by lifted-foldings as introduced in Theorem~\ref{thm_mflexible}.

\bigskipp The cross-ratio of a circular quadrilateral with vertices $f_1,f_2,f_3, f_4$ can be explicitly expressed in terms of two surface points and the corresponding evolution inversion $\sigma_r$ \cite{blaschke, r_congr}:   
\begin{equation}\label{form_cross}
\text{cr}(f_1, f_2,f_3, f_4 )=\text{cr}(f_1, f_2,\sigma_r(f_2), \sigma_r(f_1) )= \frac{\inner{\mathfrak{f}_1,\mathfrak{f}_2}\inner{\mathfrak{r},\mathfrak{r}}}{2 \inner{\mathfrak{f}_1,\mathfrak{r}}\inner{\mathfrak{f}_2,\mathfrak{r}}}.
\end{equation}
%
%

\bigskipp As we have seen in the previous section, lifted-folding of a circular net with spherical curvature lines yields new circular nets that have again curvature lines on spheres. Contrary to circularity, the cross-ratios of the circular quadrilaterals are changed during lifted-foldings. However, rather surprisingly, Darboux pairs of spherical curvature lines are mapped to new Darboux pairs under lifted-folding:  
\begin{prop}
If the cross-ratios of the quadrilaterals along an $(\m_1,\m_2)$-type Ribaucour pair are constant, then the cross-ratios after a lifted-folding are again constant. 
\end{prop}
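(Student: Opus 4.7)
The plan is to reduce the claim to a direct computation using the cross-ratio formula~\eqref{form_cross}. By Proposition~\ref{prop_flexible_pair}, the pair $(f,\sigma_n(g))$ is again a Ribaucour pair with the same R-evolution map $\{\sigma_{r_{ij}}\}$ as $(f,g)$. Choosing $\sigma_n(\mathfrak{g}_i)$ as homogeneous coordinates for $\sigma_n(g_i)$ and using that $\mathfrak{r}_{ij}\perp \mathfrak{n}$ forces $\sigma_n(\mathfrak{r}_{ij})=\mathfrak{r}_{ij}$, the normalization $\mathfrak{f}_i-\mathfrak{f}_j+\sigma_n(\mathfrak{g}_j)-\sigma_n(\mathfrak{g}_i)=0$ persists, so the very same R-evolution complex $\mathfrak{r}_{ij}$ appears in~\eqref{form_cross} for the transformed pair. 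It therefore suffices to control how the three scalar building blocks $\inner{\mathfrak{f}_i,\mathfrak{g}_i}$, $\inner{\mathfrak{f}_i,\mathfrak{r}_{ij}}$ and $\inner{\mathfrak{g}_i,\mathfrak{r}_{ij}}$ change when $\mathfrak{g}_i$ is replaced by $\sigma_n(\mathfrak{g}_i)$.

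The core of the argument is to identify three stripe-invariants. First, because $\sigma_{r_{ij}}$ is a linear isometry sending $(\mathfrak{f}_i,\mathfrak{g}_i)$ to $(\mathfrak{f}_j,\mathfrak{g}_j)$, the inner product $\inner{\mathfrak{f}_i,\mathfrak{g}_i}$ does not depend on $i$ along the stripe. Second --- and this is the geometrically essential point --- since $\mathfrak{n}\in \mathcal{M}=\spann{\mathfrak{m}^1,\mathfrak{m}^2,\mathfrak{p}}$ by construction, Proposition~\ref{prop_spherical_evolution} asserts $\sigma_{r_{ij}}(\mathfrak{n})=\mathfrak{n}$, so both $\inner{\mathfrak{f}_i,\mathfrak{n}}$ and $\inner{\mathfrak{g}_i,\mathfrak{n}}$ are constants along the two curves. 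Third, $\inner{\sigma_n(\mathfrak{g}_i),\mathfrak{r}_{ij}}=\inner{\mathfrak{g}_i,\mathfrak{r}_{ij}}$, again by $\mathfrak{r}_{ij}\perp \mathfrak{n}$.

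Expanding $\sigma_n(\mathfrak{g}_i)=\mathfrak{g}_i-\tfrac{2\inner{\mathfrak{g}_i,\mathfrak{n}}}{\inner{\mathfrak{n},\mathfrak{n}}}\mathfrak{n}$ then gives
\[
\inner{\mathfrak{f}_i,\sigma_n(\mathfrak{g}_i)}=\inner{\mathfrak{f}_i,\mathfrak{g}_i}-\frac{2\inner{\mathfrak{f}_i,\mathfrak{n}}\inner{\mathfrak{g}_i,\mathfrak{n}}}{\inner{\mathfrak{n},\mathfrak{n}}},
\]
which by the previous observations is also independent of $i$. Substituting into~\eqref{form_cross} for the transformed pair I obtain
\[
\text{cr}(f_i,\sigma_n(g_i),\sigma_n(g_j),f_j)=\frac{\inner{\mathfrak{f}_i,\sigma_n(\mathfrak{g}_i)}}{\inner{\mathfrak{f}_i,\mathfrak{g}_i}}\,\text{cr}(f_i,g_i,g_j,f_j),
\]
that is, the new cross-ratio is the old (constant) cross-ratio rescaled by a constant prefactor, and constancy is preserved. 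There is no serious obstacle: once one recognizes that the fixed-point subspace $\mathcal{M}$ from Proposition~\ref{prop_spherical_evolution} forces the auxiliary inner products $\inner{\mathfrak{f}_i,\mathfrak{n}}$ and $\inner{\mathfrak{g}_i,\mathfrak{n}}$ to be stripe-invariants, the remainder is a short algebraic manipulation of~\eqref{form_cross}.
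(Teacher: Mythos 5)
Your proof is correct, and it rests on the same two pillars as the paper's: the cross-ratio formula \eqref{form_cross} and the fact that $\sigma_n$ commutes with the R-evolution map because $\mathfrak{n}\perp\mathfrak{r}_{ij}$, so that the same complexes $\mathfrak{r}_{ij}$ and the same factors $\inner{\mathfrak{f}_j,\mathfrak{r}_{ij}}$, $\inner{\mathfrak{g}_j,\mathfrak{r}_{ij}}$ reappear after folding. Where you diverge is in the final step. The paper never needs $\inner{\mathfrak{f}_i,\sigma_n(\mathfrak{g}_i)}$ to be constant: it observes that equality of the cross-ratios of two \emph{adjacent} quadrilaterals is equivalent to a condition in which the shared factor $\inner{\mathfrak{f}_j,\mathfrak{g}_j}$ cancels, and that this condition involves only folding-invariant quantities. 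You instead prove the stronger statement that $\inner{\mathfrak{f}_i,\mathfrak{n}}$, $\inner{\mathfrak{g}_i,\mathfrak{n}}$ and $\inner{\mathfrak{f}_i,\mathfrak{g}_i}$ are stripe-invariants (in the globally consistent homogeneous coordinates produced by iterating the R-evolution map, which the paper also uses implicitly), so that the new cross-ratio is the old one rescaled by the explicit constant $\inner{\mathfrak{f}_i,\sigma_n(\mathfrak{g}_i)}/\inner{\mathfrak{f}_i,\mathfrak{g}_i} = 1 - 2\inner{\mathfrak{f}_i,\mathfrak{n}}\inner{\mathfrak{g}_i,\mathfrak{n}}/\bigl(\inner{\mathfrak{n},\mathfrak{n}}\inner{\mathfrak{f}_i,\mathfrak{g}_i}\bigr)$. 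That buys you a quantitative description of how the constant cross-ratio changes under the folding, at the cost of a few extra invariance checks. One small correction: the fixed-point property $\sigma_{r_{ij}}(\mathfrak{n})=\mathfrak{n}$ follows from the definition of an $(\m^1,\m^2)$-type pair (or Fact~\ref{fact_fixedpoint}), not from Proposition~\ref{prop_spherical_evolution}, which goes in the opposite direction (spherical curves give $(\m^1,\m^2)$-type pairs).
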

\begin{proof}
As we have seen in the proof of Proposition~\ref{prop_flexible_pair}, the R-evolution map~$\sigma_r$ is preserved under lifted-folding and admissible folding inversions $\sigma_n$ commute with it: $\mathfrak{n} \perp \mathfrak{r}_{ij}$ for any $(ij) \in \mathcal{E}^2$. Therefore, the cross-ratios of an $(\m_1,\m_2)$-type Ribaucour pair $(f,g)$ change under an admissible lifted-folding via $\sigma_n$ in the following way:
\begin{equation*}
\begin{aligned}
\text{cr}(\mathfrak{f}_j, \mathfrak{g}_j,\sigma_{r_{ij}}(\mathfrak{g}_j), \sigma_{r_{ij}}(\mathfrak{f}_j)) &= \frac{\inner{\mathfrak{f}_j, \mathfrak{g}_j}\inner{\mathfrak{r}_{ij},\mathfrak{r}_{ij}}}{2 \inner{\mathfrak{f}_j,\mathfrak{r}_{ij} }\inner{\mathfrak{g}_j,\mathfrak{r}_{ij}}} \\&\downarrow \
%
\\\text{cr}(\mathfrak{f}_j, \sigma_n(\mathfrak{g}_j),\sigma_{r_{ij}}(\sigma_n(\mathfrak{g}_j)), \sigma_{r_{ij}}(\mathfrak{f}_j)) &= \frac{\inner{\mathfrak{f}_j,\sigma_n(\mathfrak{g}_j)}\inner{\mathfrak{r}_{ij},\mathfrak{r}_{ij}}}{2 \inner{\mathfrak{f}_j,\mathfrak{r}_{ij} }\inner{\mathfrak{g}_j,\mathfrak{r}_{ij}}}.
\end{aligned}
\end{equation*}

Moreover, from formula (\ref{form_cross}), we learn that two adjacent quadrilaterals of the Ribaucour pair~$(f,g)$ have the same cross-ratio,
\begin{equation*}
\text{cr}(\mathfrak{f}_j, \mathfrak{g}_j,\sigma_{r_{ij}}(\mathfrak{g}_j), \sigma_{r_{ij}}(\mathfrak{g}_i))= \text{cr}(\mathfrak{f}_j, \mathfrak{g}_j,\sigma_{r_{jk}}(\mathfrak{g}_j), \sigma_{r_{jk}}(\mathfrak{g}_i)),
\end{equation*}
if and only if
\begin{equation*}
\frac{\inner{\mathfrak{r}_{ij},\mathfrak{r}_{ij}}}{\inner{\mathfrak{f}_j,\mathfrak{r}_{ij}}\inner{\mathfrak{g}_j,\mathfrak{r}_{ij}}} = \frac{\inner{\mathfrak{r}_{jk},\mathfrak{r}_{jk}}}{\inner{\mathfrak{f}_j,\mathfrak{r}_{jk}}\inner{\mathfrak{g}_j,\mathfrak{r}_{jk}}}.
\end{equation*}
Since the last equation is invariant under lifted-foldings, the claim follows.
\end{proof}
\noindent As a consequence, together with Corollary~\ref{cor_unfolding}, the concept of lifted-folding reveals that describing isothermic nets with a family of spherical parameter lines is equivalent to finding all planar isothermic nets of $(M)$-type, i.\,e.\,discrete holomorphic nets of $(M)$-type.

\bigskipp In summary:
\begin{thm}\label{thm_iso_folding}
Lifted-foldings preserve the class of isothermic circular nets with a family of spherical curvature lines. In particular, all such nets can be obtained from holomorphic maps of $(M)$-type via lifted-foldings.
\end{thm}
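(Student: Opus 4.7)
My plan is to derive both assertions by combining three earlier results: Theorem~\ref{thm_mflexible} (lifted-folding preserves circularity and the sphericity of curvature lines), Proposition~\ref{prop_constant_cr} (cross-ratio constancy along an $(\m^1,\m^2)$-type Ribaucour pair is preserved under lifted-folding), and Corollary~\ref{cor_unfolding} (any non-degenerate such net can be flattened onto a fixed sphere via a lifted-folding). The two claims of the theorem correspond to preservation under lifted-folding and to the existence of a preimage planar holomorphic map.

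For the preservation statement, I would first invoke Proposition~\ref{prop_spherical_evolution} to decompose a given isothermic net with spherical curvature lines as a sequence $f^{(1)}, f^{(2)}, \ldots$ of consecutive $(\m^1,\m^2)$-type Ribaucour transforms along which cross-ratios are constant --- this being the meaning of isothermic in our setting. Under the iterated folding prescribed in Theorem~\ref{thm_mflexible}, each consecutive pair $(f^{(j)}, f^{(j+1)})$ is first transported by the global M\"obius transformation $\sigma_{ij}\circ\cdots\circ\sigma_{12}$, which preserves both the $(\m^1,\m^2)$-structure and all cross-ratios, and then the second curve is acted on by the further folding inversion $\sigma_{jk}$ --- precisely the situation of Proposition~\ref{prop_constant_cr}. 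Hence constancy of cross-ratios is preserved stripe by stripe, and together with Theorem~\ref{thm_mflexible} this shows that the lifted-folded net is again an isothermic net with a family of spherical curvature lines.

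For the second assertion, I would specialise Corollary~\ref{cor_unfolding} by taking folding parameters $\lambda_{ij}=-1$, which collapses all spherical parameter lines onto a single sphere~$s$. By the first part just proved, the flattened net on $s$ is still isothermic, hence a sequence of Darboux-transformed curves on $s$; a global M\"obius transformation sending $s$ to a plane then produces a planar discrete holomorphic map, and Remark~\ref{rem_alt_folding} certifies that each consecutive Ribaucour pair is of $(\m^{(ij)})$-type. Thus the original net is indeed obtained via lifted-folding from a holomorphic map of $(M)$-type.

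The point requiring the most care --- and which I expect to be the main obstacle --- is verifying that the global folding inversions of Theorem~\ref{thm_mflexible} really do decompose, stripe by stripe, into a M\"obius transformation of the pair followed by a single additional inversion acting only on one curve, so that Proposition~\ref{prop_constant_cr} may be reapplied inductively along the whole sequence. This decomposition is built into the commutativity argument of Proposition~\ref{prop_flexible_pair} (folding inversions commute with the R-evolution map, and sphere pencils are preserved), so once it is spelled out no genuinely new technical work is needed and the theorem follows as a diagrammatic consequence of the earlier lemmas.
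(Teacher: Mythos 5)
Your proposal is correct and takes essentially the same route as the paper, which presents this theorem as a summary of the (unlabelled) proposition in Subsection~\ref{subsect_iso} stating that lifted-folding preserves constancy of cross-ratios along an $(\m^1,\m^2)$-type Ribaucour pair, combined with Corollary~\ref{cor_unfolding}; your stripe-by-stripe decomposition of the global folding into a M\"obius transformation of the pair followed by a single inversion on one curve is exactly the intended argument. The only slip is a labelling one: the cross-ratio preservation result you invoke is that unlabelled proposition, not Proposition~\ref{prop_constant_cr} (which is the perpendicularity characterization of planar $(\m)$-type Darboux pairs), although your description of its content is the correct one.
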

%
%
\subsection{Darboux transforms of $(\m)$-type}\label{subsect_mDarboux} Since isothermic nets with a family of spherical lines of curvature can be generated from a discrete holomorphic map of $(M)$-type, in this subsection we aim to determine all those special holomorphic nets. 

Therefore, we again restrict to planar discrete curves, where $(\m)$-type Darboux pairs admit the following simple characterization:
\begin{prop}\label{prop_constant_cr}
Let $( f,g)$ be a planar discrete Ribaucour pair of $(\m)$-type. The cross-ratios of all circular faces are constant if and only if along the curve we have
\begin{equation}\label{equ_iso_cond}
\mathfrak{g}_j \perp \f_i \inner{\f_k, \m} - \f_k \inner{f_i, \m}   \ \ \ \Big( \Leftrightarrow \ \mathfrak{f}_j \perp \g_i \inner{\g_k, \m} - \g_k \inner{\g_i, \m} \Big). 
\end{equation}
\end{prop}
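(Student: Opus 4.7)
\medskip
\noindent\textit{Proof plan.} The strategy is a direct computation once one combines the explicit formula (\ref{form_cross}) with the $(\m)$-type expression (\ref{equ_evolution_m}) for the R-evolution complexes. The key observation is that the ratio appearing in the preceding Proposition's proof, namely
\begin{equation*}
\Phi_{ab}(v):=\frac{\inner{r_{ab},r_{ab}}}{\inner{\f_b,r_{ab}}\inner{v,r_{ab}}},
\end{equation*}
simplifies dramatically under the $(\m)$-type hypothesis. First I would rewrite the face cross-ratio at the middle vertex~$j$: using (\ref{form_cross}) together with the fact that the M-inversion~$\sigma_{r_{ij}}$ is an isometry so that $\inner{\f_i,\g_i}=\inner{\f_j,\g_j}$ along the stripe, constancy of cross-ratios on the two consecutive faces incident to $j$ is equivalent to $\Phi_{ij}(\g_j)=\Phi_{jk}(\g_j)$.

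Next, I would substitute $r_{ab}=\f_a\inner{\f_b,\m}-\f_b\inner{\f_a,\m}$ from~(\ref{equ_evolution_m}) and use the isotropy $\inner{\f_a,\f_a}=0$ to reduce
\begin{equation*}
\inner{r_{ab},r_{ab}}=-2\inner{\f_a,\f_b}\inner{\f_a,\m}\inner{\f_b,\m},\qquad \inner{\f_b,r_{ab}}=\inner{\f_a,\f_b}\inner{\f_b,\m}.
\end{equation*}
Thus the factor $\inner{\f_a,\f_b}\inner{\f_b,\m}$ cancels and one is left with $\inner{r_{ab},r_{ab}}/\inner{\f_b,r_{ab}}=-2\inner{\f_a,\m}$, which makes the constancy condition collapse to the scalar identity
\begin{equation*}
\frac{\inner{\f_i,\m}}{\inner{\g_j,\f_i}\inner{\f_j,\m}-\inner{\g_j,\f_j}\inner{\f_i,\m}}=\frac{\inner{\f_k,\m}}{\inner{\g_j,\f_k}\inner{\f_j,\m}-\inner{\g_j,\f_j}\inner{\f_k,\m}}.
\end{equation*}

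Cross-multiplying, the terms $\pm\inner{\f_i,\m}\inner{\f_k,\m}\inner{\g_j,\f_j}$ cancel and, dividing by $\inner{\f_j,\m}$ (nonzero by the standing regularity assumption $\f_j\notin\spann{\m}^\perp$ introduced in the paragraph containing equation~(\ref{equ_evolution_m})), one is left precisely with
\begin{equation*}
\inner{\f_i,\m}\inner{\g_j,\f_k}=\inner{\f_k,\m}\inner{\g_j,\f_i},
\end{equation*}
which is exactly the stated orthogonality $\g_j\perp\f_i\inner{\f_k,\m}-\f_k\inner{\f_i,\m}$. The equivalent form in parentheses is obtained by repeating the argument verbatim with the other representative of $r_{ab}$ in~(\ref{equ_evolution_m}) (the one expressed through~$g$), interchanging the roles of $f$ and $g$.

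The computation itself is essentially algebraic bookkeeping; the main thing to keep straight is that $r_{ab}$ is only well-defined up to scale, so one must verify (trivially, by homogeneity) that the quantity $\Phi_{ab}(v)$ is scale-invariant before picking the concrete representative above. The only genuine subtlety is the exclusion of the degenerate vertices where $\inner{\f_j,\m}=0$, which however is built into the running hypothesis $\f_i,\g_i\notin\spann{\m}^\perp$ that motivated (\ref{equ_evolution_m}) in the first place.
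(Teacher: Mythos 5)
Your proposal is correct and takes essentially the same route as the paper: both reduce equality of the cross-ratios of two adjacent faces via (\ref{form_cross}) to the identity $\frac{\inner{\rr_{ij},\rr_{ij}}}{\inner{\f_j,\rr_{ij}}\inner{\g_j,\rr_{ij}}}=\frac{\inner{\rr_{jk},\rr_{jk}}}{\inner{\f_j,\rr_{jk}}\inner{\g_j,\rr_{jk}}}$ at the shared vertex and then substitute the $(\m)$-type representatives from (\ref{equ_evolution_m}). The paper merely shortcuts your algebra by normalizing $\inner{\f_i,\m}=\inner{\g_i,\m}=1$, so that $\rr_{ij}=\f_i-\f_j$ and the condition reads $\inner{\g_j,\f_j-\f_i}=\inner{\g_j,\f_j-\f_k}$ at once; your fully homogeneous computation (and the displayed scalar identity, which correctly keeps $\f_j$ in both denominators) arrives at the same conclusion.
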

\begin{proof}
Without loss of generality, we choose homogeneous coordinates for $f$ and $g$ such that 
\begin{equation*}
\inner{\mathfrak{f}_i, \mathfrak{m}}=\inner{\mathfrak{g}_i, \mathfrak{m}}=1.
\end{equation*}
Then, by formula (\ref{form_cross}), the cross-ratios of two adjacent quadrilaterals are equal if and only if 
\begin{equation*}
\frac{\inner{\mathfrak{r}_{ij},\mathfrak{r}_{ij}}}{\inner{\mathfrak{f}_j,\mathfrak{r}_{ij}}\inner{\mathfrak{g}_j,\mathfrak{r}_{ij}}} = \frac{\inner{\mathfrak{r}_{jk},\mathfrak{r}_{jk}}}{\inner{\mathfrak{f}_j,\mathfrak{r}_{jk}}\inner{\mathfrak{g}_j,\mathfrak{r}_{jk}}}.
\end{equation*}
Due to (\ref{equ_evolution_m}), this equation amounts to $\inner{\mathfrak{g}_j, \mathfrak{f}_j - \mathfrak{f}_i}=\inner{\mathfrak{g}_j, \mathfrak{f}_j-\mathfrak{f}_k}$ and the claim follows.
\end{proof}
As we now show, this simple condition (\ref{equ_iso_cond}) that characterizes planar Darboux pairs of $(\m)$-type gives rise to special circle congruences associated to the discrete curves. This observation will later give insights into the geometry of these curves and will provide  the crucial link to the notion of discrete constrained elastic curves in space forms. 
\\\\For any discrete curve $f: \V \to \Light$, we consider the map
\begin{equation*}
\PP^\pm: \V \to \{ \text{circle pencils} \}, \ \ i \mapsto \spann{\f_{i-1}, \f_{i+1}}^\perp \cap \mathcal{L},
\end{equation*}
which describes associated circle pencils along the curve~$f$. Geometrically, the circle pencil~$\PP_i^\pm$ consists of all (oriented) circles that pass through the two curve points $f_{i-1}$ and $f_{i+1}$ (see Figure~\ref{fig_pencils_congr}, left).
%
\begin{prop}\label{prop_circle_congr_iso}
Let $f$ be a discrete curve in a planar Darboux pair of $(\m)$-type. Then there exists a circle congruence~$c \in \mathcal{P}^\pm$ that is evolved by the R-evolution map~$\sigma_r$, that is, for fixed homogeneous coordinates $\mathfrak{c} \in c$ with $\inner{\mathfrak{c},\mathfrak{p}}=-1$, we have that 
\begin{equation*}
\sigma_{r_{ij}}(\mathfrak{c_{i}})=\mathfrak{c}_j.
\end{equation*}
This circle congruence~$c \in \mathcal{P}^\pm$ then  lies in a fixed linear complex determined by~$\overline{\m} \in \spann{\m, \p}$.
\end{prop}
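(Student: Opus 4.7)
The plan is to build the circle congruence $\mathfrak{c}$ directly from the Darboux partner $g$ and then read off the linear complex from the fact that all of $\spann{\m,\p}$ is fixed pointwise by the R-evolution map.

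First, I would fix homogeneous coordinates with $\inner{\mathfrak{f}_i,\m}=\inner{\mathfrak{g}_i,\m}=1$ for all $i$. In this normalization formula~(\ref{equ_evolution_m}) becomes $\mathfrak{r}_{i,i+1}=\mathfrak{f}_i-\mathfrak{f}_{i+1}=\mathfrak{g}_i-\mathfrak{g}_{i+1}$, and combining~(\ref{form_cross}) with the Darboux condition~(\ref{equ_iso_cond}) produces three $i$-independent invariants
\[
L:=\inner{\mathfrak{f}_{i-1},\mathfrak{f}_i}=\inner{\mathfrak{g}_{i-1},\mathfrak{g}_i},\qquad M:=\inner{\mathfrak{f}_i,\mathfrak{g}_i},\qquad \inner{\mathfrak{f}_{i\pm 1},\mathfrak{g}_i}=M+L;
\]
indeed $\inner{\mathfrak{g}_j,\mathfrak{f}_{j-1}}=\inner{\mathfrak{g}_j,\mathfrak{f}_{j+1}}$ forces $L_j=L_{j+1}$, and the constancy of cross-ratios then forces $M_j=M_{j+1}$. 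Using this constancy, the two vectors
\[
\mathfrak{c}_i^{(0)}:=\mathfrak{g}_i-(M+L)\m,\qquad \mathfrak{e}_i:=\mathfrak{f}_i-L\m
\]
both lie in $U_i:=\spann{\mathfrak{f}_{i-1},\mathfrak{f}_{i+1}}^\perp$. Since each $\sigma_{r_{ij}}$ is a linear isometry that fixes $\m$ (because $\mathfrak{r}_{ij}\perp\m$ by $(\m)$-type) and satisfies $\sigma_{r_{ij}}(\mathfrak{f}_i)=\mathfrak{f}_j$ and $\sigma_{r_{ij}}(\mathfrak{g}_i)=\mathfrak{g}_j$, both families $\mathfrak{c}^{(0)}$ and $\mathfrak{e}$ are evolved by the R-evolution map.

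Next, I would seek $\mathfrak{c}_i$ inside the one-parameter family $\mathfrak{c}_i(\mu):=\mathfrak{c}_i^{(0)}+\mu\,\mathfrak{e}_i\subset U_i$. The three scalar products $\inner{\mathfrak{c}_i^{(0)},\mathfrak{c}_i^{(0)}}$, $\inner{\mathfrak{c}_i^{(0)},\mathfrak{e}_i}$ and $\inner{\mathfrak{e}_i,\mathfrak{e}_i}$ depend only on $L$, $M$ and $m_0:=\inner{\m,\m}$, so the nullness condition $\inner{\mathfrak{c}_i(\mu),\mathfrak{c}_i(\mu)}=0$ becomes a quadratic equation in $\mu$ with globally constant coefficients. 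Picking a root $\mu_\ast$ yields $\mathfrak{c}_i:=\mathfrak{c}_i(\mu_\ast)\in U_i\cap\mathcal{L}=\PP_i^\pm$; since both $\mathfrak{c}^{(0)}$ and $\mathfrak{e}$ are evolved by $\sigma_r$, their fixed linear combination $\mathfrak{c}$ is too, i.e., $\sigma_{r_{ij}}(\mathfrak{c}_i)=\mathfrak{c}_j$. Finally, for any $\bar{\m}\in\spann{\m,\p}$ the $(\m)$-type assumption $\mathfrak{r}_{ij}\perp\m,\p$ gives $\sigma_{r_{ij}}(\bar{\m})=\bar{\m}$, whence $i\mapsto\inner{\mathfrak{c}_i,\bar{\m}}$ is constant along the curve. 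The single linear equation $\inner{\mathfrak{c}_0,\bar{\m}}=0$ on the two-dimensional space $\spann{\m,\p}$ then determines, up to scale, a unique $\bar{\m}$ perpendicular to every $\mathfrak{c}_i$, which identifies the asserted linear complex.

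The main obstacle is the bookkeeping in the second paragraph: verifying that the coefficients of the quadratic in $\mu$ are truly $i$-independent. This is the integrable content of the $(\m)$-type Darboux condition and is what makes the construction globally consistent; once it is secured, the existence of $\mathfrak{c}\in\PP^\pm$ and the linear-complex property both follow from the equivariance of the R-evolution under $\spann{\m,\p}$. A minor secondary issue is the possible degeneration of the quadratic (e.g.\ if $\mathfrak{c}_i^{(0)}$ and $\mathfrak{e}_i$ are linearly dependent, or if the discriminant has an unfavourable sign); in the generic, non-degenerate setting this does not occur, and the two roots $\mu_\pm$ correspond precisely to the two choices indicated by the notation $\PP^\pm$.
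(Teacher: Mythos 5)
Your first evolved family is essentially sound, but the second one rests on a false invariant, and the construction collapses there. With the normalization $\inner{\f_i,\m}=\inner{\g_i,\m}=1$, the quantities $M=\inner{\f_i,\g_i}$ and $N:=\inner{\f_{i\pm1},\g_i}$ are indeed $i$-independent ($M$ because each $\sigma_{r_{ij}}$ is a linear isometry swapping $\f_i\leftrightarrow\f_j$ and $\g_i\leftrightarrow\g_j$; $N$ by combining this with (\ref{equ_iso_cond})), so $\mathfrak{c}_i^{(0)}:=\g_i-N\m$ lies in $U_i$ and is evolved. But $L_i:=\inner{\f_{i-1},\f_i}$ is \emph{not} constant, and $N\neq M+L$ in general. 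In the normalized coordinates, equation (\ref{equ_evolution_m}) only gives \emph{proportionality} $\g_i-\g_j=c_{ij}\,(\f_i-\f_j)$ with an edge-dependent scalar $c_{ij}$, and one computes $\inner{\g_j,\f_{j-1}}=M+c_{j-1,j}L_j$ and $\inner{\g_j,\f_{j+1}}=M+c_{j,j+1}L_{j+1}$; condition (\ref{equ_iso_cond}) therefore forces only the product $c_{ij}L_{ij}$ to be constant, not $L$ itself. Your deduction ``$L_j=L_{j+1}$'' tacitly assumes $c_{ij}=1$ for every edge. To see that this really fails, take $\m=\q_0$: then the normalized lift is the standard Euclidean one, $L_j=-\tfrac12|f_{j-1}-f_j|^2$, and your claim would say that every curve admitting a $(\q_0)$-type Darboux partner has constant Euclidean edge length. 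This contradicts the one-parameter freedom per vertex in Construction~1 (and the discussion around Figure~\ref{fig_nonarclength}): the Darboux condition at $j$ only constrains $f_{j+1}$ to lie on the circle about $g_j$ through $f_{j-1}$, leaving $|f_jf_{j+1}|$ free. Consequently $\mathfrak{e}_i=\f_i-L\m$ is neither well defined with a single constant $L$ nor orthogonal to $\f_{i\pm1}$, and the coefficients of your quadratic in $\mu$ are not $i$-independent.

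The repair is cheap and lands you exactly on the paper's proof. Since all $\f_i$ are points, $\inner{\p,\f_{i\pm1}}=0$, so $\p\in U_i$ automatically, and $\p$ is fixed by every $\sigma_{r_{ij}}$ because $\rr_{ij}\perp\p$. Replacing $\mathfrak{e}_i$ by $\p$, the pencil $\cc_i(\mu):=\g_i-N\m+\mu\p$ is precisely $\spann{\g_i,\m,\p}\cap U_i$, the nullness condition $\inner{\cc_i(\mu),\cc_i(\mu)}=\bigl(-2N+N^2\inner{\m,\m}\bigr)-2\mu N\inner{\m,\p}-\mu^2=0$ has manifestly $i$-independent coefficients (using $\inner{\g_i,\p}=0$, $\inner{\g_i,\m}=1$), and its roots give the circle congruence of the proposition, evolved by $\sigma_r$ and lying in $\PP_i^\pm$; this is the circle the paper extracts from $\spann{\g_j,\m,\p}$. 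Your final step, identifying $\overline{\m}\in\spann{\m,\p}$ from the constancy of $\inner{\cc_i,\m}$ and $\inner{\cc_i,\p}$, is correct as written and agrees with the paper.
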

\begin{proof}
Suppose that $f$ and $g$ are related by a Darboux transformation of $(\m)$-type. By Proposition~\ref{prop_constant_cr}, we then know that
\begin{equation*}
\mathfrak{a}_{ik}:=\f_i \inner{\f_k, \m} - \f_k \inner{f_i, \m} \perp \spann{\mathfrak{g}_j, \mathfrak{m}, \mathfrak{p}}.
\end{equation*}
Hence, the inversion with respect to $a_{ik}$ fixes any circle in $\spann{\mathfrak{g}_j, \mathfrak{m}, \mathfrak{p}}$ and interchanges the curve point~$f_i$ with~$f_k$. Thus, a circle $c_j \in \spann{\mathfrak{g}_j, \mathfrak{m}, \mathfrak{p}}$ that passes through $f_i$, also has to contain the point $f_k$ and therefore lies in~$\mathcal{P}^\pm$. Note that this circle $c_j$ exists and is unique up to orientation. 

Since the evolution map $\sigma_r$ evolves the subspaces $\spann{\mathfrak{g}_j, \mathfrak{m}, \mathfrak{p}}$ and the curve points of $f$ simultaneously, the circles $c$ with appropriately chosen orientation provide the sought-after circle congruence.

Furthermore, since $\m$ and $\p$ are fixed points for all inversions in the evolution map, the quantity $\frac{\inner{\cc_i, \m}}{\inner{\cc_i, \p}}\equiv\xi \in \mathbb{R}$ is constant for all $i \in \V$. Therefore, we conclude that the circle congruence $c$ satisfies
\begin{equation*}
\inner{\cc_i, \m- \xi\p}=0 \ \text{for all } i \in \V
\end{equation*}
and hence lies in the linear complex determined by $(\m - \xi\p) \in \spann{\m, \p}$.
\end{proof} 
%
\noindent As we will see below, $(\m)$-type evolution maps that evolve a circle congruence which lies in a hyperbolic complex give rise to Darboux transformations. This motivates the following definition:
\begin{defi}
A map $\sigma: \E \to \{ \text{M-inversions} \}$ is called an \emph{$(\m)$-type Darboux evolution map} for the discrete curve~$f$ if it is an $(\m)$-type evolution map for $f$ which evolves a circle congruence $c \in \mathcal{P}^\pm$, that is, $\sigma_{ij}(\cc_i)=\cc_j$ for homogeneous coordinates satisfying $\inner{\cc_i, \p}=\inner{\cc_j, \p}=-1$. 
\end{defi}
%
%
\noindent The following lemma will give a hint on how to construct discrete curves that admit $(\m)$-type Darboux evolution maps.
%
%
\begin{lemdef}\label{lem_evo_tangential}
Suppose that $f$ is a planar discrete curve and $c \in \mathcal{P}^\pm$ a circle congruence in a linear complex determined by $\overline{\m}$. The circle congruence~$c$ induces an $(\overline{\m})$-type Darboux evolution map $\sigma_r$ via
\begin{equation*}
\rr_{ij}:=\cc_{i} \inner{\cc_j, \p} - \cc_j \inner{\cc_i, \p}
\end{equation*}
if and only if there exists a \emph{tangential circle congruence~$\ttt: \mathcal{E} \to \Light$} such that $\ttt_{ij} \perp \f_i, \f_j, \cc_i, \cc_j$.
\end{lemdef}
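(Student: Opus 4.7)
The plan is to factor the iff through a single algebraic condition on $V := \spann{\f_i, \f_j, \cc_i, \cc_j}$, namely $\dim V \leq 3$.

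First, I verify that most features of the claim are automatic from the construction. Since $\cc_i, \cc_j$ lie in the linear complex determined by $\overline{\m}$ and the formula for $\rr_{ij}$ is antisymmetric in the weights $\inner{\cdot, \p}$, one has $\rr_{ij} \perp \overline{\m}, \p$, so $\sigma_{\rr_{ij}}$ is an M-inversion fixing $\overline{\m}$ and $\p$. Because $\rr_{ij} \in \spann{\cc_i, \cc_j}$ and $\cc_i, \cc_j$ are null, the reflection formula gives $\sigma_{\rr_{ij}}(\cc_i) = (\inner{\cc_i, \p}/\inner{\cc_j, \p})\,\cc_j \propto \cc_j$, so the circle congruence is evolved. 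The only nontrivial condition left is $\sigma_{\rr_{ij}}(\f_i) \propto \f_j$; by the reflection formula together with $\cc_j \perp \f_i$, this is equivalent to $\rr_{ij} \in \spann{\f_i, \f_j}$, which, since $\rr_{ij}$ is a nonzero element of $\spann{\cc_i, \cc_j}$, is in turn equivalent to $\dim V \leq 3$.

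I would next study $V$ via the Gram matrix $M$ of $\{\f_i, \f_j, \cc_i, \cc_j\}$, exploiting nullity together with $\cc_i \perp \f_j$ and $\cc_j \perp \f_i$ (inherited from $c \in \mathcal{P}^\pm$). Setting $\alpha = \inner{\f_i, \f_j}$, $\beta = \inner{\f_i, \cc_i}$, $\gamma = \inner{\f_j, \cc_j}$, $\delta = \inner{\cc_i, \cc_j}$, a cofactor computation yields $\det M = (\alpha\delta - \beta\gamma)^2$, and an inspection shows that every $3 \times 3$ minor is itself proportional to $\alpha\delta - \beta\gamma$. Hence $\mathrm{rank}\,M \in \{2, 4\}$: either $V$ is non-degenerate of dimension $4$ (rank $4$), or $\dim V = 3$ with a one-dimensional radical $V \cap V^\perp$ (rank $2$), the latter occurring precisely when $\alpha\delta = \beta\gamma$.

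For the forward direction, I assume $\rr_{ij}$ induces a Darboux evolution map, so $\dim V \leq 3$ and we are in the rank-$2$ case. Any generator $\ttt_{ij}$ of $V \cap V^\perp$ is automatically null (being perpendicular to itself) and by definition satisfies the required orthogonality to $\f_i, \f_j, \cc_i, \cc_j$. For the reverse direction, the existence of a null $\ttt_{ij} \in V^\perp$ excludes the rank-$4$ case (whose $V^\perp$ would be a single non-null line), so $\dim V = 3$; it only remains to match the specific formula for $\rr_{ij}$. Pairing the resulting linear dependence $\lambda_1\f_i + \lambda_2\f_j + \lambda_3\cc_i + \lambda_4\cc_j = 0$ with $\p$ and using $\f_i, \f_j \perp \p$ yields $\lambda_3\inner{\cc_i, \p} + \lambda_4\inner{\cc_j, \p} = 0$, pinning the ratio $\lambda_3 : \lambda_4$ so that $\lambda_3\cc_i + \lambda_4\cc_j$ is proportional to $\rr_{ij}$, placing $\rr_{ij}$ in $\spann{\f_i, \f_j}$ as required. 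The main obstacle I anticipate is the Gram-minor bookkeeping that yields the rank dichotomy $\{2, 4\}$; the very degenerate branch $\dim V = 2$ (e.g.\ $\cc_i$ passing through $\f_i$, or $\cc_i, \cc_j$ in oriented contact) is ruled out by standard regularity assumptions on $f$ and on the congruence $c$.
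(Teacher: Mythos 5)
Your proposal is correct and follows essentially the same route as the paper: the paper reduces the evolution condition to the statement that the two contact elements $\spann{\f_i,\cc_j}$ and $\spann{\f_j,\cc_i}$ lie in a common $3$-dimensional subspace and then identifies their common circle with the tangential circle, which is precisely your condition $\dim\spann{\f_i,\f_j,\cc_i,\cc_j}\le 3$ together with a null generator of the radical. Your Gram-matrix rank dichotomy (in block form $\bigl(\begin{smallmatrix}0&B\\B^T&0\end{smallmatrix}\bigr)$, so the rank is even and the rank-$3$ case is excluded) is just an explicit computational verification of what the paper asserts geometrically, so the two arguments coincide in substance.
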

\begin{proof}
The inversions~$\sigma_r$ provide an evolution map for~$f$ if and only if, for any edge $(ij)$, the M-inversion~$\sigma_{r_{ij}}$ interchanges the curve points $f_i$ and $f_j$. This is the case if and only if for each edge~$(ij)$ the contact elements $\spann{\f_i, \cc_j}$ and $\spann{\f_j, \cc_i}$ lie in a 3-dimensional subspace. That is, there exists a common circle in both contact elements (which is obviously equivalent to the existence of a tangential circle). 
\end{proof}
We remark that for any discrete curve there exists a 1-parameter family of circle congruences that admit a tangential circle congruence. This is because the choice of one circle $c_0 \in \mathcal{P}_0^\pm$ at an initial vertex already uniquely determines both, $c \in \mathcal{P}^\pm$ and the tangential circle  congruence. However, as we will see below, the  additional requirement that those circle congruences $c \in \mathcal{P}^\pm$ lie in fixed linear complexes restricts the geometry of the curve.

Moreover, those observations also tell us that a discrete curve can admit at most a 1-parameter family of $(\m^\lambda)$-type Darboux evolution maps.
%
%
\\\\\textbf{Construction 1.} Planar discrete curves $f$ that admit at least \emph{one} $(\m)$-type Darboux evolution map can be iteratively constructed from three initial curve points $(f_0, f_1, f_2) \in \Light$ and one initial circle $\cc_1 \perp \m$ that passes through $f_0$ and $f_2$: 
\begin{itemize}
\item let $\ttt_{01}:=\f_0 \inner{\cc_1, \f_1} - \cc_1\inner{\f_0, \f_1}$ and $\ttt_{12}:=\f_2 \inner{\cc_1, \f_1} - \cc_1\inner{\f_2, \f_1}$ be the two tangential circles to $c_1$ that go through the curve point $\f_1$;
\item define the next circle $\cc_2:= \f_1 \inner{\ttt_{12}, \m} - \ttt_{12} \inner{\f_1, \m}$;
\item choose an arbitrary point $\f_4 \perp \cc_2$ that lies on the circle $c_2$ and is distinct from $f_1$
\end{itemize} 

Iteration then generates a discrete curve with an $(\m)$-type Darboux evolution map (c.\,f.\,Lemma~\ref{lem_evo_tangential}). As this construction reveals, this condition is a rather weak property for discrete curves and there is a 1-parameter choice for each additional curve point.
%
\\\\\textbf{Construction 2.} However, by requiring the existence of \emph{two} Darboux evolution maps of $(\m^1)$- and $(\m^2)$-type with $\spann{\m^1}\neq \spann{\m^2}$, the construction becomes unique: assume we prescribe two linearly independent vectors $\m^1, \m^2 \in \mathbb{R}^{3,2}$, three initial curve points $(f_0, f_1, f_2) \in \Light$ and two circles $\cc_1^1 \perp \m^1$ and $\cc_1^2 \perp \m^2$ that both go through the curve points $f_0$ and $f_2$. Then the following procedure uniquely determines the sought-after discrete curve (see Figure~\ref{fig_circle_dyn}):
\begin{itemize}
\item let $\ttt_{01}^1:=\f_0 \inner{\cc_1^1, \f_1} - \cc_1^1\inner{\f_0, \f_1}$ and $\ttt_{12}^1:=\f_2 \inner{\cc_1^1, \f_1} - \cc_1^1\inner{\f_2, \f_1}$ be the two tangential circles to $c_1^1$ that go through the curve point $\f_1$;
\item let $\ttt_{01}^2:=\f_0 \inner{\cc_1^2, \f_1} - \cc_1^2\inner{\f_0, \f_1}$ and $\ttt_{12}^2:=\f_2 \inner{\cc_1^2, \f_1} - \cc_1^2\inner{\f_2, \f_1}$ be the two tangential circles to $c_1^2$ that go through the curve point $\f_1$;
\item define the next circles $\cc_2^1:= \f_1 \inner{\ttt_{12}^1, \m^1} - \ttt_{12}^1 \inner{\f_1, \m^1}$ and $\cc_2^2:= \f_1 \inner{\ttt_{12}^2, \m^2} - \ttt_{12}^2 \inner{\f_1, \m^2}$;
\item the circles $c_2^1$ and $c_2^2$ intersect in the curve point $f_1$ and, generically, in a second point $f_3$, which provides the next curve point; 
\end{itemize}
%
%
%
\begin{figure}
\hspace*{-2.7cm} \begin{overpic}[width=1.35\linewidth]{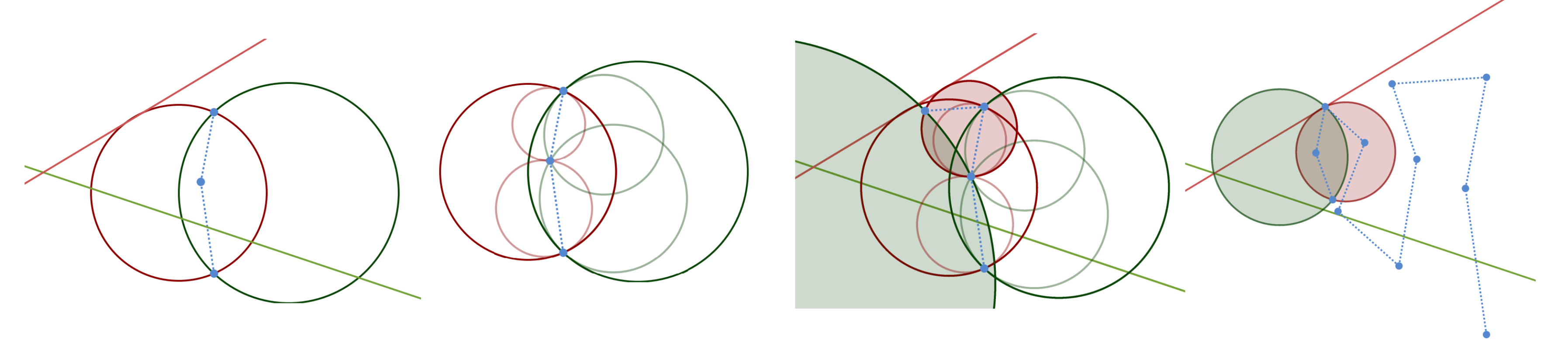}
    \put(12.9,16){$f_2$}
     \put(13,10.2){$f_1$}
     \put(23,12){$c^1_1$}
     \put(8,6){$c^2_1$}
      \put(13,3){$f_0$}
      \put(42,15.2){$t^1_{12}$}
     \put(41,7){$t^1_{01}$}
     \put(30,9.5){$t^2_{01}$}
     \put(31,13.5){$t^2_{12}$}
     \put(53,17){$c^1_2$}
     \put(65,14.5){$c^2_2$}
     \put(58,16.7){$f_3$}
     \end{overpic}
     \caption{Iterative construction of a discrete curve (blue) with two Darboux evolution maps of $(\m_1)$-type and $(\m_2)$-type (see Construction~2).}\label{fig_circle_dyn}
     \end{figure}
%
%
We remark that this construction may have singularities. This is the case when the constructed circles $\cc_2^1$ and $\cc_2^2$ are in oriented contact and do not determine the next curve point. Examples without singularities are provided by discrete constrained elastic curves in space forms as discussed in Subsection~\ref{subsect_holo_elastic}.
%
%
\\\\The latter construction already describes the entire class of discrete curves with a 1-parameter family of $(\m^\lambda)$-type Darboux evolution maps:
%
%
\begin{prop}\label{prop_combination_s}
Let $f$ be a discrete planar curve admitting two circle congruences $c^1 \in \mathcal{P}^\pm$ and $c^2 \in \mathcal{P}^\pm$ that induce Darboux evolution maps of $(\m^1)$-type and $(\m^2)$-type such that $\spann{\m^1} \neq \spann{\m^2}$. Then $c^1$ and $c^2$ intersect at a constant angle along~$f$, i.\,e.\,
\begin{equation*}
\frac{\inner{\cc_i, \cc_j}}{\inner{\cc_i, \p}\inner{\cc_j,\p}} \equiv \xi \in \mathbb{R} \ \text{for all } i \in \V,
\end{equation*}
and each circle congruence $c^\lambda \in \mathcal{P}^\pm$,
\begin{equation}\label{equ_further_circle_congr}
\mathfrak{c}^\lambda:=\lambda \mathfrak{c}^{1} + \tilde{\lambda}\mathfrak{c}^{2} + \mathfrak{p}, \ \text{where } \tilde{\lambda}:=\frac{1+2\lambda}{2 \xi \lambda -2} \ \text{and } \lambda \in \mathbb{R}\setminus \{\frac{1}{\xi}\},
\end{equation}
also induces an $(\m^\lambda)$-type Darboux evolution map with $\m^\lambda \in \spann{\m^1, \m^2}$.
\end{prop}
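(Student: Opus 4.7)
The plan is to first show that the two Ribaucour evolution maps associated to $\cc^1$ and $\cc^2$ coincide as maps on $\mathbb{R}^{3,2}$. By Lemma-Definition~\ref{lem_evo_tangential} the R-evolution complexes take the form $\rr^\alpha_{ij}=\cc^\alpha_i-\cc^\alpha_j$ (with the normalization $\inner{\cc^\alpha,\p}=-1$), and each of these must be parallel to $f_i-f_j$ because both M-inversions send $f_i$ to $f_j$. Hence $\rr^1_{ij}$ and $\rr^2_{ij}$ are collinear and induce a single common inversion, which I denote $\sigma_{ij}$. This common inversion fixes $\m^1$, $\m^2$ and $\p$ by the two $(\m^\alpha)$-type conditions, so under the hypothesis $\spann{\m^1}\neq\spann{\m^2}$ it fixes the full $3$-dimensional subspace $\spann{\m^1,\m^2,\p}$. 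Since $\sigma_{ij}$ is moreover an isometry of $\mathbb{R}^{3,2}$ sending $\cc^\alpha_i$ to $\cc^\alpha_j$ for both $\alpha=1,2$, the inner product $\inner{\cc^1,\cc^2}$ is preserved from vertex to vertex, which together with the normalizations gives the constancy of $\xi$ and proves the first assertion.

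For the second assertion I would set $\cc^\lambda_i:=\lambda\cc^1_i+\tilde\lambda\cc^2_i+\p$ and verify the defining properties of an $(\m^\lambda)$-type Darboux evolution in turn. Membership in $\mathcal{P}^\pm_i$ follows at once because $\cc^1_i,\cc^2_i\in\mathcal{P}^\pm_i$ and $\inner{\p,f_{i\pm1}}=0$. Expanding $\inner{\cc^\lambda_i,\cc^\lambda_i}$ with $\inner{\cc^\alpha,\cc^\alpha}=0$, $\inner{\cc^\alpha,\p}=-1$, $\inner{\cc^1,\cc^2}=\xi$ and the convention $\inner{\p,\p}=-1$, the light-cone condition becomes $2\lambda\tilde\lambda\xi-2\lambda-2\tilde\lambda-1=0$, which rearranges precisely to the stated formula for $\tilde\lambda$. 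The new evolution complex $\rr^\lambda_{ij}=\lambda\rr^1_{ij}+\tilde\lambda\rr^2_{ij}$ remains parallel to $f_i-f_j$, so $\sigma_{\rr^\lambda_{ij}}$ coincides with $\sigma_{ij}$, continues to evolve $f$, and by linearity also evolves $\cc^\lambda$.

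It remains to exhibit a linear complex containing $\cc^\lambda$ that witnesses the $(\m^\lambda)$-type condition: pick any nonzero $\m^\lambda\in\spann{\m^1,\m^2}$; since $\sigma_{ij}$ fixes every element of $\spann{\m^1,\m^2,\p}$, the ratio $\inner{\cc^\lambda_i,\m^\lambda}/\inner{\cc^\lambda_i,\p}$ is independent of $i$, and as in the proof of Proposition~\ref{prop_circle_congr_iso} the vector $\bar{\m}^\lambda:=\m^\lambda-\xi^\lambda\p$ satisfies $\cc^\lambda\perp\bar{\m}^\lambda$. The main delicate point I anticipate is the bookkeeping of signs and scalings in the light-cone calculation; once the normalizations are pinned down, all remaining claims collapse to linear algebra in the $3$-dimensional subspace fixed by $\sigma_{ij}$.
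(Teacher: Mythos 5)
Your argument breaks down at its very first step. From the fact that both M-inversions interchange the projective points $f_i$ and $f_j$ you conclude that $\rr^1_{ij}$ and $\rr^2_{ij}$ are both parallel to $\f_i-\f_j$ and hence collinear. This is false: for any nonzero $\alpha,\beta$ the inversion with respect to $\alpha\f_i-\beta\f_j$ maps $\spann{\f_i}$ onto $\spann{\f_j}$, so ``interchanging $f_i$ and $f_j$'' only pins the evolution complex down to the $2$-dimensional space $\spann{\f_i,\f_j}$, not to a line (the vector $\f_i-\f_j$ depends on the relative scaling of the homogeneous coordinates, and different evolution maps correspond to different scalings). Indeed, by (\ref{equ_evolution_m}) the two complexes are $\rr^1_{ij}\in\spann{\f_i\inner{\f_j,\m^1}-\f_j\inner{\f_i,\m^1}}$ and $\rr^2_{ij}\in\spann{\f_i\inner{\f_j,\m^2}-\f_j\inner{\f_i,\m^2}}$; these are collinear only if $\inner{\f_i,\m^1}/\inner{\f_i,\m^2}$ is constant along the curve, i.e.\ only in the degenerate case where $f$ itself lies in a fixed linear complex from $\spann{\m^1,\m^2}$. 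Generically the two Darboux evolution maps are genuinely distinct --- this is the whole point of Construction~2 and of the proposition.

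Everything downstream of that claim is therefore unsupported: there is no single inversion $\sigma_{ij}$ fixing all of $\spann{\m^1,\m^2,\p}$ and simultaneously evolving $\cc^1$ and $\cc^2$, so your derivations of the constancy of $\xi$, of the evolution of $\cc^\lambda$, and of the $(\m^\lambda)$-type property all collapse. What does survive is the computation of $\tilde\lambda$ from the light-cone condition $\inner{\cc^\lambda,\cc^\lambda}=0$ (correct, and in the spirit of the paper) and the observation that $\cc^\lambda_i\in\mathcal{P}^\pm_i$. What is actually needed --- and what the paper does --- is to write $\cc^\alpha_j$ explicitly in terms of $\cc^\alpha_i$ and the respective evolution complexes (formulas (\ref{equ_inpoints})), compute $\inner{\cc^1_i,\cc^2_i}$ directly to obtain the constancy of $\xi$, exhibit the tangential congruence $\ttt^\lambda=\lambda\ttt^1+\tilde\lambda\ttt^2+\p$ so that Lemma~\ref{lem_evo_tangential} guarantees that $c^\lambda$ induces an evolution map at all, and finally show that the edge function $\eta_{ij}$ defined by $\rr^\lambda_{ij}\perp\m^1+\eta_{ij}\m^2$ is constant along the curve. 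That last step is the real content of the $(\m^\lambda)$-type claim, and it is entirely missing from your proposal.
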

\begin{proof}
We fix homogeneous coordinates $\cc^1 \in c^1$ and $\cc^2 \in c^2$ such that $\inner{\cc^1, \p}=\inner{\cc^2, \p}=-1$. Since, by using the evolution maps, we obtain 
\begin{equation}\label{equ_inpoints}
\begin{aligned}
\cc_j^{1}= \cc_i^1 +\frac{\inner{\cc_i^1,\f_i}}{\inner{\f_i,\f_j}\inner{\f_i,\m^1}} \Big( \f_i\inner{\f_j, \m^1} - \f_j\inner{\f_i, \m^1} \Big),
\\[5pt]\cc_j^{2}= \cc_i^{2} +\frac{\inner{\cc_i^{2},\f_i}}{\inner{\f_i,\f_j}\inner{\f_i,\m^2}} \Big( \f_i\inner{\f_j, \m^2} - \f_j\inner{\f_i, \m^2} \Big),
\end{aligned}
\end{equation}
and a straightforward calculation shows that $\inner{\cc_i^{1}, \cc_i^{2}}\equiv \xi \in \mathbb{R}$ for all $i \in \V$. Therefore the circle congruences $ c^{1}$ and $c^{2}$ intersect at a constant angle along the discrete curve~$f$ (see (\ref{equ_intersection_angle})). 
 
\bigskipp Appropriate linear combinations defined by (\ref{equ_further_circle_congr}), then provide further circle congruences $c^\lambda \in \mathcal{P}^\pm$. Any circle congruence $c^\lambda$ then intersects $c^{1}$ and $c^{2}$ at a constant angle along the curve~$g$. 

Suppose that $t^1$ and $t^2$ denote the tangential circle congruences of $c^1$ and $c^2$, respectively. For homogeneous coordinates $\inner{\ttt_i^1,\p}=\inner{\ttt_i^2,\p}=-1$, the congruence
\begin{equation}\label{equ_tang}
\mathfrak{t}^\lambda:=\lambda \mathfrak{t}^{1} + \tilde{\lambda}\mathfrak{t}^{2} + \mathfrak{p}
\end{equation}
then yields the tangential circle congruence of $c^\lambda$. Therefore, the circle congruence~$c^\lambda$ indeed induces an evolution map for~$f$ via
\begin{equation*}
\rr_{ij}^\lambda:= \cc_i^\lambda - \cc_j^\lambda = \lambda (\cc_i^1 - \cc_j^1) + \tilde{\lambda} (\cc_i^2 - \cc_j^2) \in \spann{\f_i, \f_j}.
\end{equation*}

It remains to prove that for fixed $\lambda \in \mathbb{R}\setminus \{\frac{1}{\xi}\}$ this evolution map is of $(\m^\lambda)$-type. Thus, we define the edge-function $\eta:\mathcal{E}\to \mathbb{R}$ via
\begin{equation}\label{equ_comp_pencil}
0=\inner{\rr_{ij}, \m^1+\eta_{ij}\m^2}.
\end{equation}
By using equations (\ref{equ_inpoints}), we obtain
\begin{equation*}
\eta_{0+}=-\frac{\tilde{\lambda} \inner{\cc_0^2 - \cc_+^2, \m^1}}{\lambda \inner{\cc_0^1 - \cc_+^1, \m^2}} = \frac{\tilde{\lambda} \inner{\cc_0^2, \f_0} \inner{\f_0, \m^1}}{\lambda \inner{\f_0, \m^2} \inner{\cc_0^1, \f_0} }.
\end{equation*}
Since the quantities $\frac{\inner{\cc_0^1, \f_0}}{\inner{\cc_0^1, \p}\inner{\f_0, \m^1}}$ and $\frac{\inner{\cc_0^2, \f_0}}{\inner{\cc_0^2, \p}\inner{\f_0, \m^2}}$ are both constant along the curve~$g$, we conclude that $\eta$ is also constant along the curve. 

Hence, the circle congruence $c^\lambda$ induces an $(\m^1 + \eta \m^2)$-type Darboux evolution map.
\end{proof}
%
Finally, we prove that, for a suitable choice of initial points, $(\m)$-type Darboux evolution maps indeed yield Darboux transformed curves. The only condition is that the linear complex containing the evolved circle congruence is hyperbolic or parabolic. If this is the case, then the discrete Darboux transforms can be even given by explicit parametrizations in the light cone:
%
%
\begin{thm}\label{thm_darboux_trafos}
Let $f$ be a planar discrete curve that admits an $(\m)$-type Darboux evolution map $\sigma_r$, i.\,e.\,there exists an evolved  circle congruence $c \in \mathcal{P}^\pm$ that lies in the linear complex $\overline{\m}^\perp$. Further suppose that the homogeneous coordinates $\cc_i \in c_i$ satisfy $\inner{\cc_i,\p}=-1$. 

In the following three cases, there are initial points such that this evolution map gives rise to real Darboux transforms:
\begin{itemize}
\item[(i)] If $\inner{\overline{\m}, \overline{\m}}< 0$ and $\inner{\overline{\m}, \p}^2+\inner{\overline{\m}, \overline{\m}}\neq 0$, then there are two $(\m)$-type Darboux transforms~$g^\pm$ of~$f$ explicitly described by
\begin{equation}\label{equ_darboux_expl}
\g_i^\pm = \cc_i + \mu^\pm \overline{\m} + (\mu^\pm\inner{\overline{\m}, \p} -1)\p, \ \ \text{where } \  \mu^\pm:=\frac{\inner{\overline{\m}, \p} \pm \sqrt{-\inner{\overline{\m}, \overline{\m}}}}{\inner{\overline{\m}, \p}^2+\inner{\overline{\m}, \overline{\m}}}.
\end{equation}
\item[(ii)] If $\inner{\overline{\m}, \p}^2+\inner{\overline{\m}, \overline{\m}}=0$ and $\inner{\overline{\m}, \p} \neq 0$, then there exists one Darboux transform $g$ given by
\begin{equation}\label{equ_darboux_expl2}
\g_i = \cc_i + \frac{1}{2\inner{\overline{\m},\p}}\overline{\m}-\frac{1}{2}\p.
\end{equation}
\item[(iii)] If $\inner{\overline{\m}, \overline{\m}}=0$ and $\inner{\overline{\m}, \p}\neq0$, then the curve points of the induced Darboux transform $g$ lie on the circle represented by $\overline{m} \in \Light$ and are given by
\begin{equation}\label{equ_darboux_expl3}
\g_i = \cc_i + \frac{1}{\inner{\overline{\m},\p}}\overline{\m}.
\end{equation}
\end{itemize}
\end{thm}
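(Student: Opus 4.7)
The plan is to construct each Darboux transform $g$ by an ansatz in which propagation along the curve is automatic: set
\[
\g_i := \cc_i + \mu \overline{\m} + \nu \p
\]
for constants $\mu, \nu \in \mathbb{R}$ to be determined, where $c$ is the evolved circle congruence from the hypothesis. Since $\sigma_r$ is an $(\m)$-type evolution map, every element of $\mathcal{M} = \spann{\overline{\m}, \p}$ is fixed by every $\sigma_{r_{ij}}$; together with $\sigma_{r_{ij}}(\cc_i) = \cc_j$, this already guarantees $\sigma_{r_{ij}}(\g_i) = \g_j$. Thus the curve $g$ is automatically generated by the evolution map from the single initial vector $\g_0$, and it only remains to choose $\mu, \nu$ so that each $\g_i$ represents a point in the light cone and so that $(f, g)$ forms a Darboux pair.

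Two scalar conditions then pin down $\mu$ and $\nu$. Requiring that $\g_i$ be a point, i.e.\ $\inner{\g_i, \p} = 0$, together with the normalizations $\inner{\cc_i, \p} = -1$ and $\inner{\p, \p} = -1$, forces $\nu = \mu \inner{\overline{\m}, \p} - 1$, which is exactly the coefficient appearing in the stated formulas. Imposing additionally $\inner{\g_i, \g_i} = 0$, and using $\inner{\cc_i, \cc_i} = 0$ together with the defining property $\inner{\cc_i, \overline{\m}} = 0$, reduces to
\[
\mu^2 \bigl( \inner{\overline{\m}, \p}^2 + \inner{\overline{\m}, \overline{\m}} \bigr) - 2\mu \inner{\overline{\m}, \p} + 1 = 0,
\]
a quadratic in $\mu$ whose discriminant is $-4 \inner{\overline{\m}, \overline{\m}}$. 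Case (i) is the generic regime: the leading coefficient is nonzero and the discriminant is strictly positive, yielding the two distinct real roots $\mu^\pm$. Case (ii) corresponds to the vanishing of the leading coefficient $\inner{\overline{\m}, \p}^2 + \inner{\overline{\m}, \overline{\m}}$, so the quadratic degenerates to a linear equation whose unique root is $\mu = 1/(2\inner{\overline{\m}, \p})$. Case (iii) is the vanishing-discriminant degeneration $\inner{\overline{\m}, \overline{\m}} = 0$; here $\overline{\m} \in \mathcal{L}$ represents a circle, and the unique root $\mu = 1/\inner{\overline{\m}, \p}$ automatically yields $\inner{\g_i, \overline{\m}} = 0$, so $g$ lies on this circle.

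It remains to verify that $(f, g)$ is a Darboux pair. Concircularity of $\{\f_i, \f_j, \g_i, \g_j\}$ is immediate because $\g_i - \g_j = \cc_i - \cc_j$ is a scalar multiple of the inversion axis $\rr_{ij}$, and $\f_i - \f_j$ is likewise such a multiple (since $\sigma_{\rr_{ij}}$ swaps $\f_i$ and $\f_j$); the four vertices therefore lie in the 3-dimensional subspace $\spann{\f_i, \g_i, \rr_{ij}}$. Constancy of cross-ratios then follows from Proposition~\ref{prop_constant_cr}: the condition $\g_j \perp \f_i \inner{\f_k, \m} - \f_k \inner{\f_i, \m}$ expands, after using $\inner{\cc_j, \f_{j\pm 1}} = 0$, to a multiple of $\inner{\overline{\m}, \f_i} \inner{\m, \f_k} - \inner{\overline{\m}, \f_k} \inner{\m, \f_i}$; since $\overline{\m} - \m \in \spann{\p}$ and $\f_l \in \p^\perp$ (because $\f_l$ is a point), one has $\inner{\overline{\m}, \f_l} = \inner{\m, \f_l}$ and the expression vanishes identically. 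The main obstacle is the case-by-case bookkeeping for the quadratic: in each of (i)--(iii) one must check that the denominator appearing in the formula is nonzero under precisely the stated non-degeneracy hypothesis, and that the resulting $\g_i$ is genuinely distinct from $\cc_i$ so that a real point curve is produced rather than a degenerate configuration.
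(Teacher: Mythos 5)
Your proposal is correct and follows essentially the same route as the paper's proof: the same ansatz $\g_i = \cc_i + \mu\overline{\m} + \nu\p$, automatic propagation because $\overline{\m}$ and $\p$ are fixed by the evolution while $c$ is evolved, and constancy of cross-ratios via Proposition~\ref{prop_constant_cr} from $\g_j \in \spann{\cc_j, \m, \p}$. The only difference is that you carry out explicitly the ``straightforward computations'' the paper elides, deriving the quadratic $\mu^2(\inner{\overline{\m},\p}^2 + \inner{\overline{\m},\overline{\m}}) - 2\mu\inner{\overline{\m},\p} + 1 = 0$ and reading off the three cases from its leading coefficient and discriminant, which is a welcome clarification rather than a different argument.
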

%
%
\begin{proof}
By Proposition~\ref{prop_circle_congr_iso}, there exists a constant $\xi \in \mathbb{R}$ such that $\cc_i \perp \m + \xi \p=:\overline{\m}$ for all  $i \in \V$. Straightforward computations then show that $\g^\pm$ defined by (\ref{equ_darboux_expl}),  (\ref{equ_darboux_expl2}) or  (\ref{equ_darboux_expl3}) indeed provide vectors in the projective light cone that represent points. Therefore, in all cases the formulas describe well-defined discrete curves.

Moreover, since $\p$ and $\overline{\m}$ are fixed points of all inversions in the evolution map and the circle congruence $c$ is evolved by it, the discrete curves~$g^\pm$ are related to $f$ via the $(\m)$-type Darboux evolution map~$\sigma_r$.
Finally, by Proposition~\ref{prop_constant_cr}, we conclude that $g^\pm$ are Darboux transforms of~$f$: the cross-ratios are constant because of
\begin{equation*}
\f_i\inner{\f_k, \m } - \f_k\inner{\f_i, \m} \perp \g_j \in \spann{\cc_j, \m, \p}.
\end{equation*}
\end{proof}
%
\noindent The explicit formulas (\ref{equ_darboux_expl}) and (\ref{equ_darboux_expl2}) reveal that in these cases some data of the discrete Darboux pairs are related by simple Lie inversions: suppose we are in case (i) of Theorem~\ref{thm_darboux_trafos} and define the vectors 
\begin{equation*}
\bb^\pm:=\mu^\pm \overline{\m} + (\mu^\pm\inner{\overline{\m}, \p} -1)\p.
\end{equation*}
Then we obtain that
\begin{equation*}
\sigma_{b^\pm}(\cc_i)=\cc_i - \frac{2 \inner{\cc_i, \bb^\pm}}{\inner{\bb^\pm, \bb^\pm}} \bb^\pm = \cc_i + \bb^\pm = \g_i^\pm.
\end{equation*}
Hence, the Lie inversion $\sigma_{b^\pm}$ maps the circle congruence $c \in \mathcal{P}^\pm$, which induces the Darboux transformation, to the new curve points~$g^\pm$. 

Moreover, since Lie transformations preserve oriented contact between circles, we conclude that the following data of the discrete curve~$f$ and the Darboux transform~$g^\pm$ are related by the Lie inversion~$\sigma_{b^\pm}$:
\\\begin{equation*}
\begin{aligned}
\spann{\overline{\m}} \ \ &\longleftrightarrow \ \ \spann{\p}
\\[4pt]\text{circle congruence } \cc \in \mathcal{P}^\pm \ \text{evolved along } f \ \ &\longleftrightarrow \ \ \text{curve points of the Darboux transform } \g^\pm=\sigma_{b^\pm}(\cc)
\\[4pt] \text{curve points of } f \ \ &\longleftrightarrow  \ \ \text{circle congruence} \ \hat{\cc}^\pm:=\sigma_{b^\pm}(\f) \in \hat{\mathcal{P}}^\pm \text{ evolved along } g^\pm
\\[4pt] \text{tangential circle congruence } t \text{ of } c \in \mathcal{P}^\pm \ \ &\longleftrightarrow  \ \ \text{tangential circle congruence} \ \hat{\ttt}^\pm:=\sigma_{b^\pm}(\ttt) \text{ of } \hat{c} \in \hat{\mathcal{P}}^\pm,
\end{aligned}
\end{equation*}
\ \\where $\hat{\mathcal{P}}^\pm$ denotes the circle pencils associated to the Darboux transforms~$g^\pm$.
\ \\\\Equivalent results hold for case~(ii) in Theorem~\ref{thm_darboux_trafos}, where the Lie inversion is determined by the vector  
\begin{equation*}
\bb:= \frac{1}{2\inner{\overline{\m},\p}}\overline{\m}-\frac{1}{2}\p.
\end{equation*}
%
%
\begin{figure}
\includegraphics[scale=0.5]{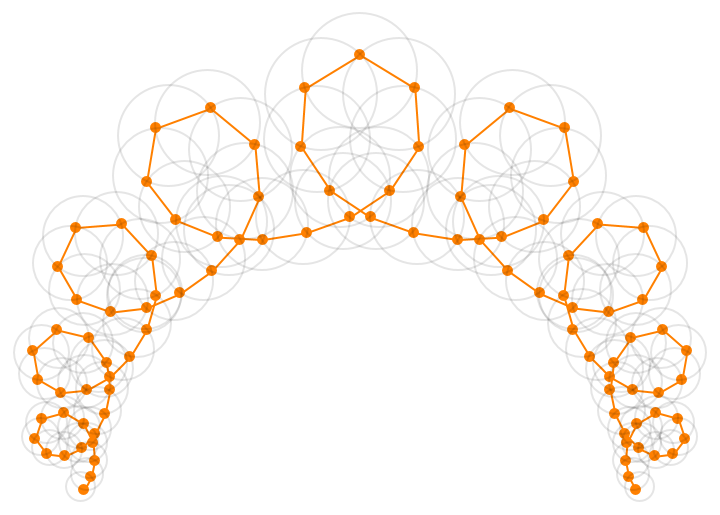}
\includegraphics[scale=0.5]{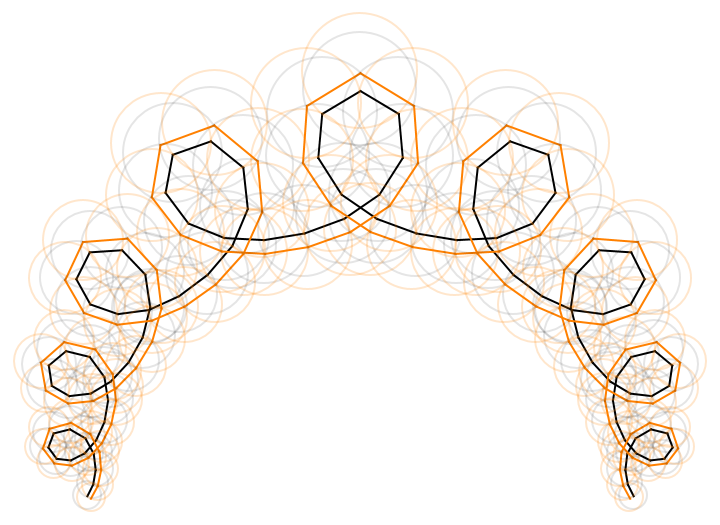}
\caption{A discrete constrained elastic curve in hyperbolic space with a circle congruence (gray) that induces an $(\m)$-type Darboux transform (gray). For the Darboux pair exists a Lie inversion that maps the orange curve points to the orange circle congruence, and the gray circles to the gray curve points.}
\end{figure}
%
%

\bigskipp Therefore, as a consequence of Proposition~\ref{prop_combination_s} and Theorem~\ref{thm_darboux_trafos}, the existence of two discrete Darboux transforms of $(\m^1)$-type and $(\m^2)$-type as obtained in Construction~2 entails a family of $(\m^\lambda)$-type Darboux transforms. 

\bigskipp Together with Proposition~\ref{prop_circle_congr_iso} and Lemma~\ref{lem_evo_tangential}, we conclude:
\begin{cor}\label{cor_constr2}
A spherical curvature line of a discrete isothermic net that has two spherical neighbouring parameter lines is, after a stereographic projection to the plane, a discrete curve obtained by Construction~2.
\end{cor}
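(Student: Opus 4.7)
The plan is to reduce the statement to the planar setting where Construction~2 applies and then appeal to uniqueness of the iterative procedure. First I would apply Corollary~\ref{cor_unfolding} to perform a lifted-folding sending every spherical curvature line of the isothermic net to a single sphere; by Theorem~\ref{thm_iso_folding}, the resulting net remains isothermic, and Proposition~\ref{prop_constant_cr} guarantees that the cross-ratios along each stripe are preserved. Composing with a stereographic projection of that sphere to a plane~$\Pi$, I obtain planar images $\bar{f}, \bar{g}^1, \bar{g}^2 \subset \Pi$ of $f$ and of its two spherical neighbours, and both $(\bar{f}, \bar{g}^1)$ and $(\bar{f}, \bar{g}^2)$ are planar discrete Darboux pairs in the $\mathbb{R}^{3,2}$-model of~$\Pi$.

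Next I would extract two distinct $(\m^i)$-type Darboux evolution maps on~$\bar{f}$. In the ambient $\mathbb{R}^{4,2}$, the two neighbours originally lie on spheres distinct from the sphere of $f$, so Proposition~\ref{prop_spherical_evolution} supplies a $3$-dimensional fixed subspace for each R-evolution map. Restricting to~$\mathbb{R}^{3,2}$, these become $2$-dimensional fixed subspaces $\spann{\m^i,\p}$, which means that each pair $(\bar{f}, \bar{g}^i)$ is a planar $(\m^i)$-type Darboux pair; the non-degeneracy assumption on the original net (distinct neighbouring spheres) guarantees $\spann{\m^1}\neq\spann{\m^2}$. Proposition~\ref{prop_circle_congr_iso} then yields two evolved circle congruences $c^1, c^2 \in \mathcal{P}^\pm$ along $\bar{f}$, lying in linear complexes determined by vectors $\overline{\m}^i\in\spann{\m^i,\p}$, and Lemma~\ref{lem_evo_tangential} supplies the tangential circle congruences certifying these as $(\m^i)$-type Darboux evolution maps.

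Finally I would match this data to the input of Construction~2: three consecutive initial points $(\bar{f}_0, \bar{f}_1, \bar{f}_2)$, the vectors $\m^1, \m^2$, and the two circles $c_1^1 \perp \m^1$, $c_1^2 \perp \m^2$ through $\bar{f}_0$ and $\bar{f}_2$. Since each step of the procedure is deterministic once these are given, and since at each vertex the tangential circles, the next circles $c_{t+1}^i$, and their second intersection are precisely the data already carried by $\bar{f}$, a straightforward induction reproduces $\bar{f}$ vertex by vertex. The main obstacle will be to exclude degenerate steps where $c_{t+1}^1$ and $c_{t+1}^2$ fail to meet transversally; here I would invoke Proposition~\ref{prop_combination_s}, which shows that the two circle congruences intersect at a constant, generically non-zero angle along $\bar{f}$, so that Construction~2 stays regular under the standing non-degeneracy hypothesis on the original isothermic net.
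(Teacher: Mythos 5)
Your proposal is correct and follows essentially the same route as the paper: unfold via Corollary~\ref{cor_unfolding} and Theorem~\ref{thm_iso_folding}, read off two distinct $(\m^1)$- and $(\m^2)$-type Darboux evolution maps from the two spherical neighbours, and identify the circle congruences of Proposition~\ref{prop_circle_congr_iso} together with the tangential congruences of Lemma~\ref{lem_evo_tangential} as exactly the input data of Construction~2. The paper compresses this into a one-sentence deduction, so your version simply supplies the details (including the regularity caveat handled via Proposition~\ref{prop_combination_s}) of the same argument.
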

\subsection{Discrete holomorphic maps of $(M)$-type and constrained elastic curves.}\label{subsect_holo_elastic} In \cite{chopembszew} it was proven that a family of spherical lines of curvature of a generic smooth Lie applicable surface are Lie transformed constrained elastic curves in space forms. In particular, this integrable surface class also includes isothermic surfaces.

In this subsection we examine whether similar results  also hold for discrete isothermic nets with a family of spherical parameter lines. Our investigations rely on a novel integrable discretization of constrained elastic curves in space forms, see \cite{discrete_elastic}. This class also includes planar discrete elastic curves in Euclidean space as discussed in \cite{BS_discreteelastic, fairing_elastica}.
%
\\\\We will again exploit the concept of lifted-folding and prove as main result that any discrete constrained elastic curve in a space form can be extended to an infinite sequence of discrete constrained elastic curves that gives a discrete holomorphic map of $(M)$-type. Thus, by applying suitable lifted-foldings, we may generate isothermic nets with a family of spherical curvature lines that are M\"obius transforms of discrete constrained elastica. 

Hence, as expected from the smooth theory, discrete constrained elastic curves provide special solutions to the circle dynamics described in the previous section (c.\,f.\,Construction~2).

\bigskipp We start by giving a brief introduction to  constrained elastic curves in space forms. The part on discrete curves is self-contained and focuses only on the facts necessary for our purposes. A detailed treatise on this discretization can be found in \cite{discrete_elastic}, where it is shown that those discrete curves satisfy a variety of properties known from their smooth counterparts: a discrete version of the smooth curvature equation and invariance under a (semi)-discrete mKdV-flow, to name just a few.

\bigskipp From now on we will consider curves in appropriate space form geometries, that is, the curves are embedded in a space of constant sectional curvature~$\kappa_\Q$. Depending on its sign, the space form is said to be \emph{Euclidean} $(\kappa_\Q=0)$, \emph{hyperbolic} $(\kappa_\Q<0)$ or \emph{spherical} $(\kappa_\Q>0)$. 

In the light cone model, those space forms are realised by the choice of a \emph{space form vector}. Thus, we still consider geometric configurations of points and circles in the plane as in the previous sections. The additional choice of a space form vector~$\q \in \mathbb{R}^{3,2}$ with $\q \perp \p$, then determines the space form into which we may project the configuration in order to obtain the embedded curves with their geometric interpretations in space forms (see the Appendix for some more details and references that cover this material in more depth).
 
\bigskipp A smooth arc-length parametrized curve in a 2-dimensional space form~$\mathcal{Q}$ with constant sectional curvature $\kappa_Q$ is constrained elastic \cite{pinkall_willmore, Heller_elastic, langer_singer} if its geodesic curvature $\kappa$ satisfies the stationary mKdV-equation 
\begin{equation*}
\kappa''+\frac{1}{2}\kappa^3+(\xi + \kappa_Q)\kappa + \chi =0
\end{equation*}
for some real parameters $\xi$ and $\chi$. Thus, up to reparametrization, those curves are invariant under the MkdV-flow. For closed curves this curvature equation tells us that the curve is a critical point of the energy functional $\int_\gamma \kappa^2 ds$ with fixed length and enclosed area.
%
%
\begin{figure}
\hspace*{-3cm}\begin{minipage}{4cm}
\includegraphics[scale=0.45]{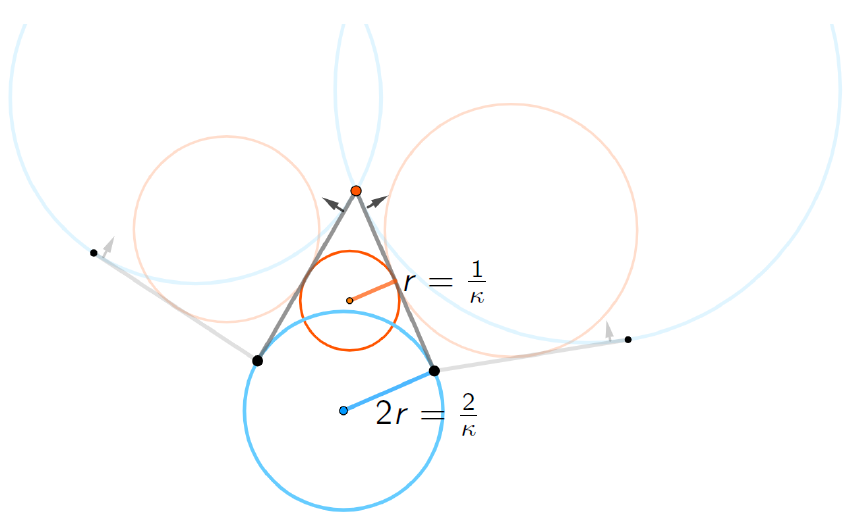}
\end{minipage}
\begin{minipage}{4cm}
\hspace*{1cm}\includegraphics[scale=0.3]{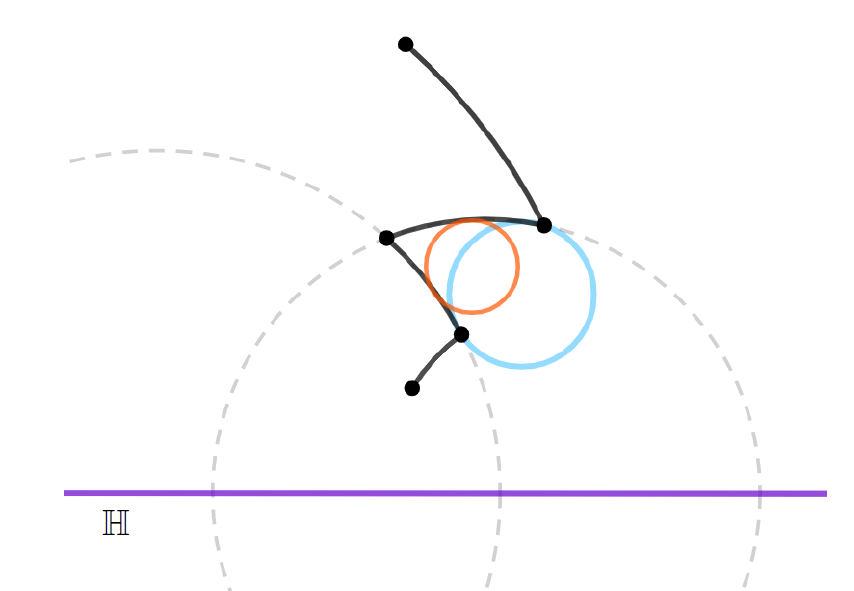}
\end{minipage}
\begin{minipage}{4cm}
\hspace*{1.8cm}\includegraphics[scale=0.4]{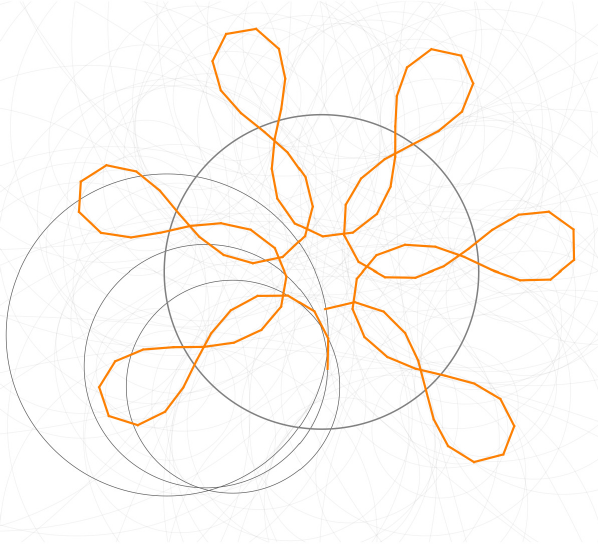}
\end{minipage}
\begin{minipage}{4cm}
\hspace*{2.1cm}\includegraphics[scale=0.3]{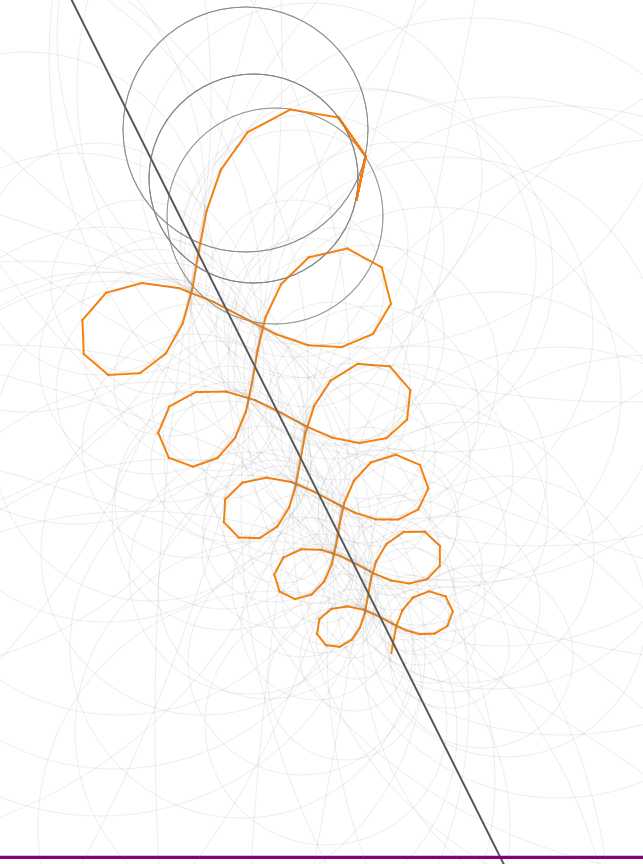}
\end{minipage}
\caption{\emph{Left.} Curvature circles (orange) in Euclidean and hyperbolic space. \emph{Right.} Discrete constrained elastic curves with their elastic circle congruence intersecting a directrix at a constant angle.} \label{fig_discrete_curves}
\end{figure}
%
%
\\\\In what follows, $\mathcal{Q}$ shall denote a 2-dimensional space form determined by the space form vector~$\mathfrak{q} \in \mathbb{R}^{3,2}$ and which then has constant sectional curvature $\kappa_\Q=-\inner{\q,\q}$. 

We say that a discrete curve~$f:\mathcal{V} \to \Light$ has \emph{constant arc-length in $\mathcal{Q}$} if the distance (measured in$~Q$) between any two consecutive points is constant. Thus, the distance-shortening geodesic segments connecting two adjacent curve points, called \emph{geodesic edges}, have the same length in~$\Q$ (see Figure~\ref{fig_discrete_curves}, left). 

We only consider \emph{regular} curves in $\Q$, that is, any three consecutive curve points are pairwise distinct and, additionally,
\begin{itemize}
\item if $\Q$ is a hyperbolic space, all curve points lie inside or outside the hyperbolic boundary;
\item if $\Q$ is a spherical space form, then two consecutive points are not antipodal.
\end{itemize} 
Note that those assumptions make the geodesic edge between two consecutive curve points unique. 

\bigskipp We equip the family of geodesic edges with orientations such that for any vertex $j \in \V$ there exists a circle $k_j \in \mathcal{P}_j^\pm$ which is in oriented contact with the two geodesic edges going through the curve point~$f_j$ (see blue circles in Figure~\ref{fig_discrete_curves}). Any discrete curve with constant arc-length admits two global choices for such an orientation.

We usually consider discrete curves in space forms together with a global orientation. The class of regular discrete oriented curves that have constant arc-length in $\mathcal{Q}$ is denoted by $\mathcal{F}^Q$.
\\\\Any oriented geodesic edge admits a unique extension to a complete geodesic in $Q$. In the light cone model, for the edge $(ij)$ this is described by a vector $t_{ij} \in \Light$ with $0=\inner{\mathfrak{t}_{ij},\q}
=\inner{\mathfrak{t}_{ij},\mathfrak{f}_{i}}=\inner{\mathfrak{t}_{ij},\mathfrak{f}_{j}}$. This circle congruence $t: \mathcal{E} \to \Light$ that extends the oriented geodesic edges is called the \emph{tangent line congruence in $\Q$}.

\bigskipp To introduce a discrete curvature notion for those curves in space forms, we generalize the Euclidean vertex-curvature circles as defined in \cite{hoffmann_lect}, which we recall here briefly (see also Figure~\ref{fig_discrete_curves}, left): suppose that $\Q$ is the usual Euclidean space form determined by $\q:=\q_0$ and consider $f \in \mathcal{F}_\Q$ with constant Euclidean arc-length. Then the Euclidean curvature at the curve point~$f_j$ is defined as $\kappa_j:= \frac{2}{r_j}$, where $r_j$ is the radius of the circle $k_j$ which is in oriented contact with the two oriented geodesic edges passing through the curve point~$f_j$. Thus, the curvature~$\kappa_j$ equals two times the geodesic curvature of the circle~$k_j$.

We naturally extend this curvature notion to space forms (see Figure~\ref{fig_discrete_curves}):
\begin{defi}
Suppose that $f \in \mathcal{F}_\Q$ is a discrete curve in the space form $\Q$. Its geodesic curvature~$\kappa_j$ at the curve point~$f_j$ is defined as two times the geodesic curvature of the circle~$k_j \in \mathcal{P}_j^\pm$ tangent to the two geodesic edges going through the curve point~$f_j$.
\end{defi}
%
%
\noindent A curve $f \in \mathcal{F}_\Q$ of constant arc-length may be characterized by the existence of a special circle congruence in $\mathcal{P}^\pm$:
\begin{lem}\label{lem_arclength}
Let $f$ be a planar discrete curve, then the following are equivalent:
\begin{itemize}
\item[(i)] $f \in \mathcal{F}_Q$, i.\,e.\,$f$ has constant arc-length in the space form $\Q$;
\item[(ii)] there exists $\q \perp \p \in \mathbb{R}^{3,2}$ such that $\f_i \inner{\f_k, \f_j} - \f_k \inner{\f_i, \f_j} \perp \q$; 
\item[(iii)] there exists $\q \perp \p \in \mathbb{R}^{3,2}$ such that the quantity
\begin{equation*}
\frac{\inner{\f_i, \f_j}}{\inner{\f_i, \q}\inner{\f_j, \q}} \equiv \chi \in \mathbb{R}
\end{equation*}
is constant along the curve;
\item[(iv)] there exists $\q \perp \p \in \mathbb{R}^{3,2}$ and a circle congruence $a \in \mathcal{P}^\pm$ such that $\aaa_j \in \spann{\f_j, \q, \p}$. 
\end{itemize}
\end{lem}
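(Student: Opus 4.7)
The plan is to prove the four-way equivalence via the three links (ii) $\Leftrightarrow$ (iii), (i) $\Leftrightarrow$ (iii), and (iii) $\Leftrightarrow$ (iv), working throughout with homogeneous coordinates for which $\inner{\f_i, \p}$ is constant along the curve (automatic from $\f_i - \f_j \perp \p$ for neighbouring points).

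For (ii) $\Leftrightarrow$ (iii), I would simply expand the pairing with $\q$ and obtain $\inner{\f_i, \q}\inner{\f_k, \f_j} = \inner{\f_k, \q}\inner{\f_i, \f_j}$; dividing through by $\inner{\f_i, \q}\inner{\f_j, \q}\inner{\f_k, \q}$ exhibits (ii) as the three-vertex statement that the quotient $\chi$ of (iii) takes equal values on the two edges at $j$, and propagating this equality along the curve yields global constancy. For (i) $\Leftrightarrow$ (iii), I would invoke the standard fact from the light cone model that the distance $d_\Q(f_i, f_j)$ between two points of the space form $\Q$ (with $\kappa_\Q = -\inner{\q,\q}$) is a monotone function of the M\"obius--Lie invariant $\chi = \inner{\f_i, \f_j}/(\inner{\f_i, \q}\inner{\f_j, \q})$, given by $\cos$, $\cosh$, or the squared Euclidean distance depending on the sign of $\kappa_\Q$; consequently constancy of $\chi$ is equivalent to constant arc-length in $\Q$.

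For the more subtle (iii) $\Leftrightarrow$ (iv), I would parameterize any candidate $\aaa_j \in \spann{\f_j, \q, \p}$ as $\alpha\f_j + \beta\q + \gamma\p$. The two pairing conditions $\inner{\aaa_j, \f_{j\pm 1}} = 0$ are linear in $(\alpha,\beta,\gamma)$; eliminating $\gamma$ via the constant normalization of $\inner{\f_i, \p}$, the compatibility condition for a non-trivial $(\alpha, \beta)$ is precisely the determinantal identity of (ii). The null condition $\inner{\aaa_j, \aaa_j} = 0$ then pins down the scale of $\aaa_j$, producing a bona fide element of $\mathcal{P}_j^\pm$; consistency of the construction across vertices (for a single $\q$) is then guaranteed by the global constancy of $\chi$ obtained from (iii). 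Conversely, the existence of such $\aaa_j$ at every vertex forces (ii) pointwise.

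The main obstacle will be the interplay between the linear and the null constraints in the step (iii) $\Leftrightarrow$ (iv): the linear part cleanly returns (ii), but the null condition introduces a further quadratic relation that must be reconciled by the freedom to rescale $\q$ (or shift it by a multiple of $\p$). Once this normalization is set at a single vertex, the constancy of $\chi$ along the curve propagates it consistently, producing the required circle congruence $a \in \mathcal{P}^\pm$.
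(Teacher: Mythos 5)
Your proposal is correct, and two of its three links coincide with the paper's argument, but you route the equivalence with (i) differently. The paper proves (i) $\Leftrightarrow$ (ii) directly: constant arc-length at a vertex $j$ is equivalent to the existence of a space form motion of $\Q$ fixing $\f_j$ and interchanging $\f_i$ and $\f_k$, and that isometry is precisely the reflection in the vector $\f_i \inner{\f_k, \f_j} - \f_k \inner{\f_i, \f_j}$ (which automatically fixes $\f_j$ and swaps the other two points by Fact~\ref{fact_inversion_complex}); being a motion of $\Q$ is exactly the condition that this vector be perpendicular to $\q$. You instead prove (i) $\Leftrightarrow$ (iii) by quoting that the $\Q$-distance is a monotone function of the invariant $\chi$. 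Both are legitimate; the paper's reflection argument stays entirely inside the algebra already set up in the appendix and avoids invoking the explicit distance formula, while your route is arguably more direct given that the paper \emph{defines} constant arc-length via distances between consecutive points. Your treatment of (ii) $\Leftrightarrow$ (iii) and of (iii) $\Leftrightarrow$ (iv) matches the paper in substance: the paper simply writes down the solution of your linear system explicitly, namely $\aaa_i = \tfrac{\alpha}{\chi}\f_i + \alpha\q + \p$ with $\alpha = \pm\sqrt{-\chi/(2-\inner{\q,\q}\chi)}$, and for the converse pairs $\aaa_j$ against $\f_i/\inner{\f_i,\q}$ and $\f_k/\inner{\f_k,\q}$, which is your rank argument in disguise.

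One small correction to your bookkeeping in the step (iii) $\Leftrightarrow$ (iv): curve points satisfy $\inner{\f_i,\p}=0$ identically (that is what characterizes point spheres), so there is no ``constant normalization of $\inner{\f_i,\p}$'' to exploit and no elimination of $\gamma$ to perform --- the coefficient $\gamma$ of $\p$ simply does not enter the two linear conditions $\inner{\aaa_j,\f_{j\pm1}}=0$, which are a $2\times 2$ system in $(\alpha,\beta)$ alone whose compatibility is the determinantal identity (ii). The role of $\gamma$ is confined to the null condition, where $\inner{\p,\p}=-1$ lets you solve $\gamma^2 = \beta^2\inner{\q,\q}+2\alpha\beta\inner{\f_j,\q}$; this is where reality of the circle must be checked, a point your proposal (and, to be fair, the paper's square root) leaves implicit. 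None of this affects the validity of your argument.
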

\begin{proof}
A discrete curve $f$ has constant arc-length in the space form $\Q$ if and only if, for any three consecutive vertices, there exists a space form motion that has $\f_j$ as fixed point and interchanges the curve points $\f_i$ and $\f_k$. Such an isometry is given by a reflection in the vector $\f_i \inner{\f_k, \f_j} - \f_k \inner{\f_i, \f_j}$. This shows that (i) is equivalent to (ii). Moreover, equivalence between (ii) and (iii) follows by straightforward computations. 

Thus, it remains to discuss property (iv). Suppose that $f \in \mathcal{F}_\mathcal{Q}$ and choose homogeneous coordinates such that $\inner{\f_i, \q}=-1$. Then 
\begin{equation*}
a:\E \to \Light, \ \aaa_i:= \frac{\alpha}{\chi} \f_i + \alpha \q + \p, \ \text{where } \ \alpha := \pm \sqrt{\frac{-\chi}{2-\inner{\q,\q}\chi}}
\end{equation*}
provides a well-defined circle congruence. Since $\inner{\aaa_j, \f_i}=\inner{\aaa_j, \f_k}=0$, we obtain that $a \in \mathcal{P}^\pm$.

Conversely, suppose that $\aaa_j \in \spann{\f_j, \q, \p}$ lies in $\mathcal{P}_j^\pm$. Then $\inner{\aaa_j, \frac{1}{\inner{\f_i, \q}}\f_i}=\inner{\aaa_j, \frac{1}{\inner{\f_k, \q}}\f_k}=0$ reveals property (iii).
\end{proof}

We remark that for a discrete curve with constant arc-length, $f \in \mathcal{F}_\mathcal{Q}$, there exists a canonical $(\q)$-type evolution map given by the linear complexes $\rr_{ij}:=\f_i\inner{\f_j, \q} - \f_j\inner{\f_i, \q}$. This evolution map~$\sigma_r$ evolves the circles $a \in \mathcal{P}^\pm$ described in Lemma~\ref{lem_arclength}(iv). Up to a global choice of orientation of the circles, this evolution provides a unique circle congruence; we call it the \emph{arc-length circle congruence}. 
%
%
\begin{figure}
\includegraphics[scale=0.3]{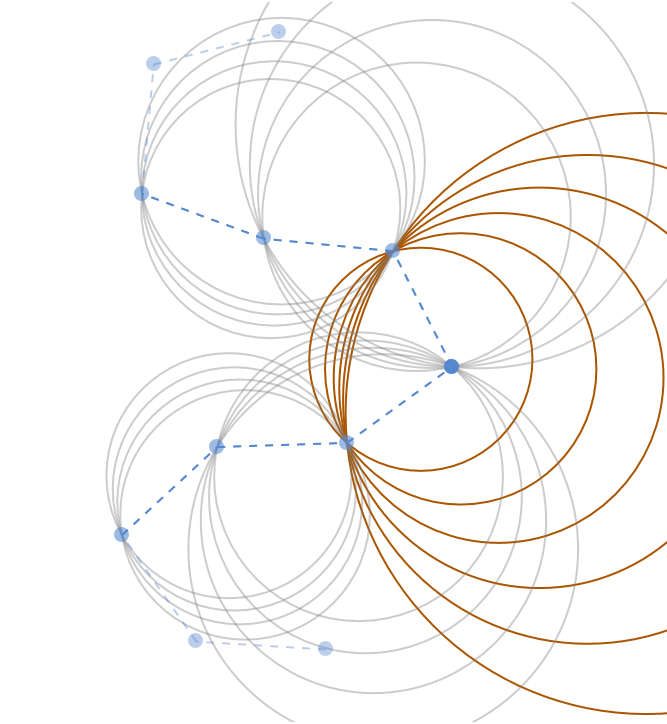}
\hspace*{0.8cm}\includegraphics[scale=0.3]{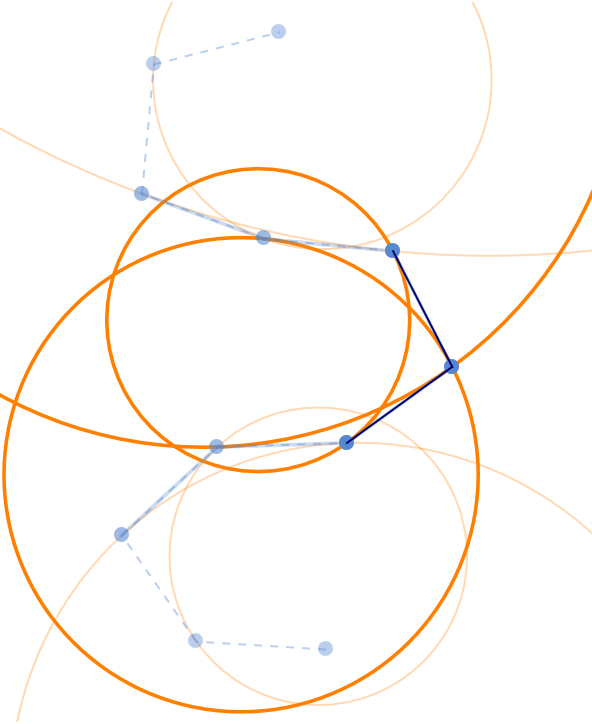}
\hspace*{-0.2cm}\includegraphics[scale=0.3]{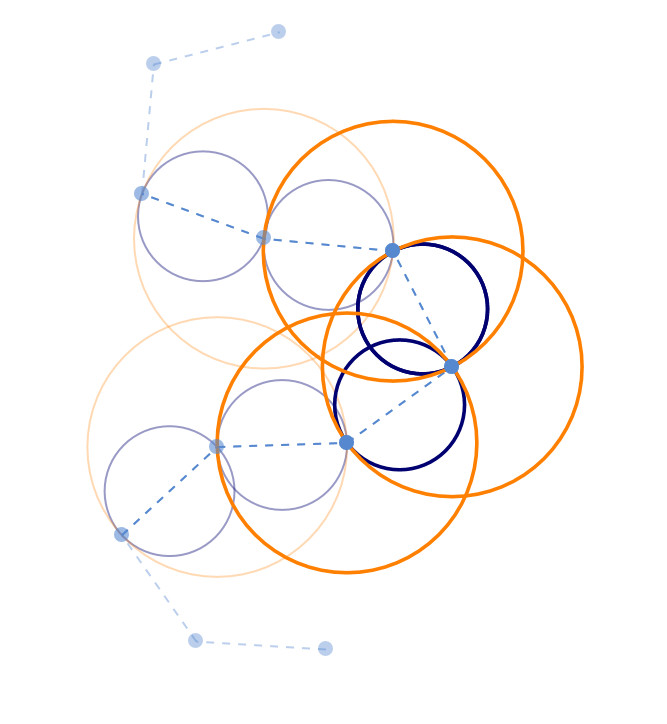}
\hspace*{-0.5cm}\includegraphics[scale=0.3]{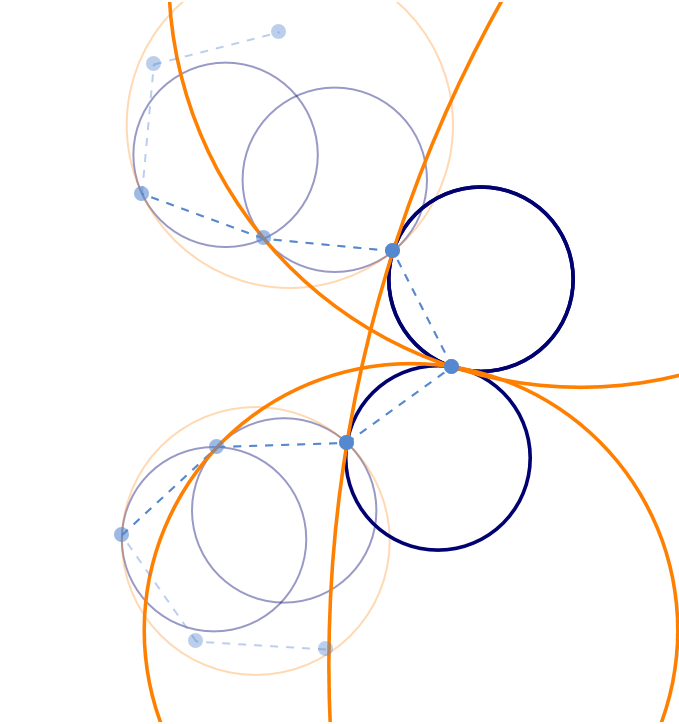}
\\[18pt]\centering\hspace*{-1cm}\includegraphics[scale=0.3]{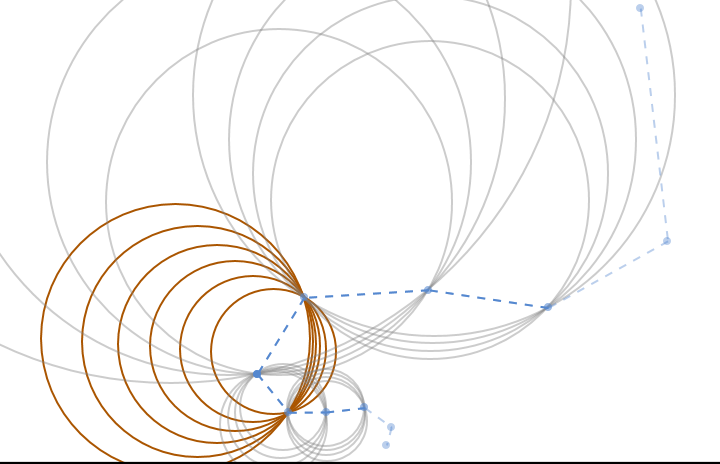}
\hspace*{0.2cm}\includegraphics[scale=0.3]{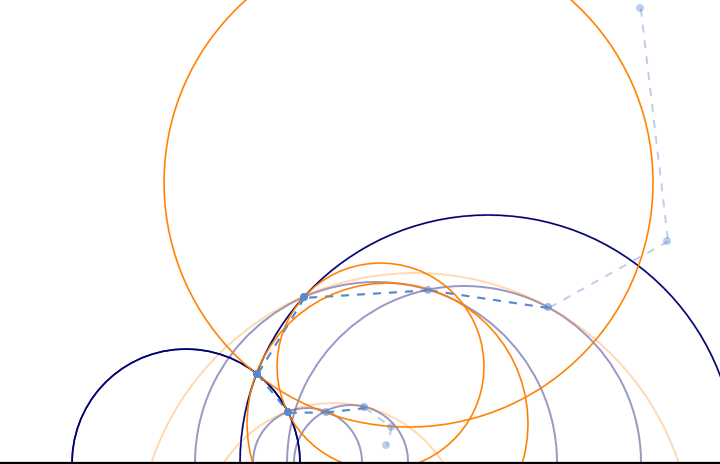}
\hspace*{0.1cm}\includegraphics[scale=0.3]{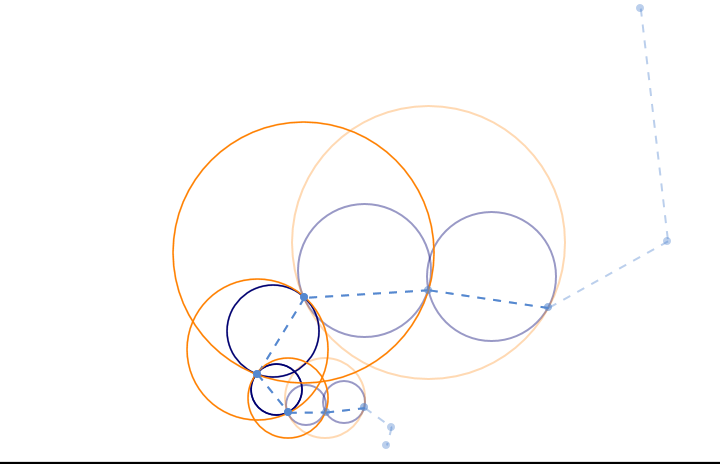}
\hspace*{0.1cm}\includegraphics[scale=0.3]{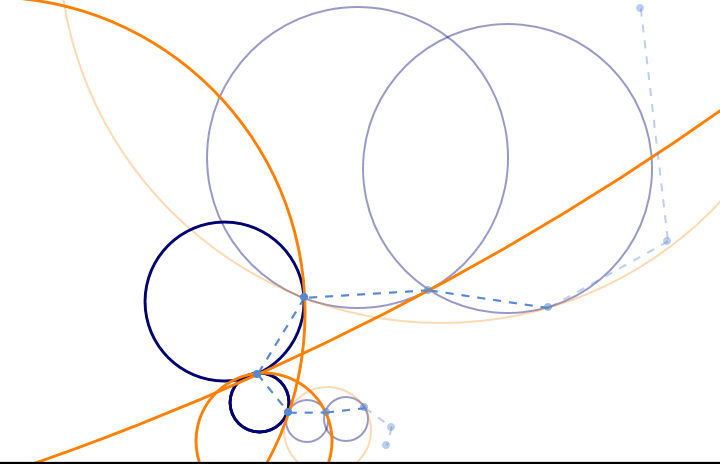}
\caption{Two discrete constrained elastic curves in Euclidean and hyperbolic space with the associated circle pencils $\mathcal{P}^\pm$ (\emph{left}). Several circle congruences (orange) together with their tangential congruences (blue) are illustrated: the elastic congruence with the tangents of the curve, the arc-length circle congruence and a congruence that intersects both at a constant angle and gives rise to an $(\m)$-type Darboux evolution map (\emph{from left to right}).}\label{fig_pencils_congr}
\end{figure}
%
%
\noindent The existence of a further special circle congruence in $\mathcal{P}^\pm$ provides one of the characterizing properties for discrete constrained elastic curves as discussed in \cite{discrete_elastic}. Here we use it as a definition:
%
%
\begin{defi}\label{def_constr_elastic}
A discrete curve $f \in \mathcal{F}_Q$ with constant arc-length in the space form $\Q$ is said to be  \emph{constrained elastic in $\mathcal{Q}$} if the circle congruence $d \in \mathcal{P}^\pm$ that is tangent to the corresponding geodesic edges~$t$ lies in a linear  complex, that is, there exists an \emph{elastic complex} $D \in \mathbb{R}^{3,2}$ such that any circle $d_j$ in the \emph{elastic circle congruence $d \in \mathcal{P}^\pm$} satisfies $\inner{\mathfrak{d}_i, \mathfrak{D}}=0$ for all $i \in \mathcal{V}$.
\end{defi}
%
%
\noindent Thus, any discrete constrained elastic curve in a space form naturally comes with two Darboux evolution maps: the constant arc-length of the discrete curve induces an $(\q)$-type evolution map, while the one induced by the elastic circle congruence is of $(\D)$-type.
\begin{lem}\label{lem_darboux_elastic}
Let $f$ be a discrete constrained elastic curve in a space form $\Q$ with arc-length circle congruence~$a$ and elastic circle congruence~$d$. Then each circle congruence
\begin{equation}\label{equ_s_lambda}
\s^\lambda=\lambda \aaa + \tilde{\lambda}\ddd + \mathfrak{p}, \ \text{where } \tilde{\lambda}:=-\frac{1+2\lambda}{2(1+\lambda)}, \ \mathbb{R} \ni \lambda \neq -1,
\end{equation}
induces an $(\m^\lambda)$-type Darboux evolution map. The corresponding tangential circle congruence $t^\lambda$ lies in a fixed linear complex~$\mathfrak{n} \in \spann{\q,\p}$. 
\end{lem}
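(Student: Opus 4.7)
The plan is to apply Proposition~\ref{prop_combination_s} to the pair $(c^1, c^2) := (a, d)$. Both are circle congruences in $\mathcal{P}^\pm$: the constant-arc-length condition gives $\aaa_j \in \spann{\f_j, \q, \p}$ by Lemma~\ref{lem_arclength}(iv), and the remark following it shows that $a$ induces the $(\q)$-type Darboux evolution map; by Definition~\ref{def_constr_elastic} the elastic congruence $d$ lies in $\D^\perp$, so by Lemma~\ref{lem_evo_tangential} the congruence $d$ induces a $(\D)$-type Darboux evolution map. Since $\q$ and $\D$ are linearly independent in the non-degenerate case, Proposition~\ref{prop_combination_s} produces a constant $\xi \in \mathbb{R}$ together with the family $\s^\lambda = \lambda\aaa + \tilde{\lambda}\ddd + \p$ with $\tilde{\lambda} = \frac{1+2\lambda}{2\xi\lambda - 2}$ and $\lambda \neq 1/\xi$, each inducing an $(\m^\lambda)$-type Darboux evolution map.

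The heart of the argument is to show $\xi = -1$, which reproduces the stated formula $\tilde{\lambda} = -\frac{1+2\lambda}{2(1+\lambda)}$ and the excluded value $\lambda = -1$. The key structural input is that both $a$ and $d$ admit the tangent line congruence $t$ (the geodesic edges of $\Q$) as their tangential circle congruence in the sense of Lemma~\ref{lem_evo_tangential}. Indeed, for $a$: since $\aaa_i, \aaa_j \in \spann{\f_i, \f_j, \q, \p}$, the geodesic vector $\ttt_{ij}$ is lightlike and satisfies $\inner{\ttt_{ij}, \f_i} = \inner{\ttt_{ij}, \f_j} = \inner{\ttt_{ij}, \q} = \inner{\ttt_{ij}, \p} = 0$, so $\ttt_{ij} \perp \spann{\f_i, \f_j, \aaa_i, \aaa_j}$ and serves as tangential. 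For $d$: the tangencies $\inner{\ddd_i, \ttt_{i-1, i}} = \inner{\ddd_i, \ttt_{i, i+1}} = 0$ are built into the defining property of a constrained elastic curve (Definition~\ref{def_constr_elastic}). A direct computation in the light cone model, using this shared tangential structure together with the normalizations $\inner{\aaa_i, \p} = \inner{\ddd_i, \p} = -1$, then yields $\xi = \inner{\aaa_i, \ddd_i} = -1$.

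For the final claim about the tangential circle congruence, formula~(\ref{equ_tang}) in the proof of Proposition~\ref{prop_combination_s} expresses $\ttt^\lambda = \lambda \ttt^1 + \tilde{\lambda}\ttt^2 + \p$, where $\ttt^1, \ttt^2$ are the tangential circle congruences of $a$ and $d$. Because $\ttt^1 = \ttt^2 = \ttt$, this collapses to $\ttt^\lambda = (\lambda + \tilde{\lambda})\ttt + \p$. Using $\inner{\ttt_{ij}, \q} = 0$ (geodesic edges in $\Q$ lie in $\q^\perp$) and $\inner{\p, \q} = 0$ by choice of space form vector, we conclude $\inner{\ttt^\lambda, \q} = 0$, placing $t^\lambda$ in the fixed linear complex determined by $\mathfrak{n} := \q \in \spann{\q, \p}$. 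The main obstacle I anticipate is the explicit verification $\xi = -1$ in the preceding paragraph, which demands careful bookkeeping of signs and inner products in the light cone model.
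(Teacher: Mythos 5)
Your overall strategy is the paper's: feed the pair $(a,d)$ into Proposition~\ref{prop_combination_s} and then control the tangential congruences. But the step you call the ``key structural input'' is false, and it breaks both halves of your argument. You claim that the geodesic edge $\ttt_{ij}$ satisfies $\inner{\ttt_{ij},\p}=0$ and is therefore a common tangential circle congruence for $a$ and $d$. The tangent line congruence is defined by only three orthogonality conditions, $\inner{\ttt_{ij},\q}=\inner{\ttt_{ij},\f_i}=\inner{\ttt_{ij},\f_j}=0$; a lightlike vector additionally orthogonal to $\p$ would represent a \emph{point}, and indeed no point can be orthogonal to the two distinct points $f_i$ and $f_j$. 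Consequently $\inner{\ttt_{ij},\aaa_i}=\inner{\ttt_{ij},\p}\neq 0$ for $\aaa_i=\tfrac{\alpha}{\chi}\f_i+\alpha\q+\p$, so $t$ is tangential to $d$ (this part is fine, it is built into Definition~\ref{def_constr_elastic}) but \emph{not} to $a$. The paper instead exhibits the tangential congruence of $a$ explicitly as $\tilde{\ttt}_j=\aaa_j-\frac{\inner{\aaa_j,\f_j}}{\inner{\f_i,\f_j}}\f_i$, which by Fact~\ref{fact_const_complex} lies in $(\q+\nu\p)^\perp$ for some constant $\nu$ that is generically nonzero.

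Two consequences. First, your derivation of $\xi=-1$ rests on the nonexistent ``shared tangential structure''; the value itself is correct (with $\inner{\aaa_j,\p}=\inner{\ddd_j,\p}=-1$ one does get $\inner{\aaa_j,\ddd_j}=-1$, reflecting that the elastic circles, being tangent to the geodesic edges at $f_{j\pm1}$, meet the arc-length circles centered at $f_j$ orthogonally), but it needs a direct computation from the explicit form of $\aaa_j$ and the tangency conditions on $\ddd_j$, not from your premise. Second, and more seriously, your conclusion $\mathfrak{n}=\q$ is generically false: with $\ttt^\lambda=\lambda\tilde{\ttt}+\tilde{\lambda}\ttt+\p$ one finds $\inner{\ttt^\lambda,\q}=\lambda\inner{\tilde{\ttt},\q}\neq 0$ in general. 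What survives --- and is all the lemma asserts --- is that $\inner{\ttt^\lambda,\q}$ and $\inner{\ttt^\lambda,\p}$ are separately constant along the curve, so by Fact~\ref{fact_const_complex} the congruence $t^\lambda$ lies in a fixed linear complex determined by some $\mathfrak{n}\in\spann{\q,\p}$, typically not $\q$ itself.
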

\begin{proof}
The first claim is an immediate consequence of Proposition~\ref{prop_combination_s}. 

Moreover, by definition, the tangential circle congruence of the elastic circle congruence~$\mathfrak{d}$ is the tangent line congruence~$\ttt \perp \q$. The circle congruence tangential to the arc-length circle congruence~$a$ is described by 
\begin{equation*}
\tilde{\ttt}_j=\mathfrak{a}_j - \frac{\inner{\mathfrak{a}_j, \mathfrak{f}_j}}{\inner{\mathfrak{f}_i,\mathfrak{f}_j }}\mathfrak{f}_i
\end{equation*}
and therefore lies in the linear complex $(\q + \nu\p)^\perp$ for a suitable $\nu \in \mathbb{R}$ (c.\,f.\,Fact~\ref{fact_const_complex}).

Since the tangential circle congruence~$t^\lambda$ of $s^\lambda$  is given by (\ref{equ_tang}), the claim follows. 
\end{proof}
%
%
Hence, by Lemma~\ref{lem_darboux_elastic} and Theorem~\ref{thm_darboux_trafos}, any discrete constrained elastic curve in a space form admits a family of $(\m^\lambda)$-type Darboux transforms. We will prove that those transforms are again constrained elastic in an appropriate space form. 

To start with, we observe that for a discrete curve with constant arc-length in $\Q$, it follows from Lemma~\ref{lem_arclength} that the vectors
\begin{equation*}
\bb_j:=\aaa_i \inner{\aaa_k, \aaa_j} - \aaa_k \inner{\aaa_i, \aaa_j} \in \spann{ \f_i \inner{\f_k, \q} - \f_k \inner{\f_i, \q}}
\end{equation*}
satisfy $\bb_j \perp \p, \q$. Thus, along the discrete curve, all those vectors $b_j$ lie in a fixed 3-dimensional subspace of $\mathbb{R}^{3,2}$. 

For discrete constrained elastic curves in space forms this property is transferred to further specific circle congruences in $\mathcal{P}^\pm$:
%
%
\begin{lem}\label{lem_arc_length_congr_elastic}
Let $f$ be a constrained elastic curve in the space form~$\Q$ with arc-length circle congruence~$a$ and elastic circle congruence~$d$. Then, for fixed $\lambda$, all vectors
\begin{equation*}
\bb_{j}^\lambda:=\s_i^\lambda \inner{\s_k^\lambda,\s_j^\lambda} - \s_k^\lambda \inner{\s_i^\lambda,\s_j^\lambda}
\end{equation*}
along the curve~$f$ lie in a 3-dimensional subspace, where $s^\lambda$ is a circle congruence as defined in~(\ref{equ_s_lambda}).
\end{lem}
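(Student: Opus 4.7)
My plan is to exhibit a two-dimensional subspace of $\mathbb{R}^{3,2}$ that is perpendicular to every $\bb_j^\lambda$; the natural candidate is $\spann{\p, \m^\lambda}$, where $\m^\lambda$ is the linear complex from the $(\m^\lambda)$-type Darboux evolution map of $\s^\lambda$ provided by Lemma \ref{lem_darboux_elastic}. Normalizing homogeneous coordinates so that $\inner{\aaa_i, \p} = \inner{\ddd_i, \p} = -1$ (as in the proof of that lemma), the combination $\s^\lambda_i = \lambda\aaa_i + \tilde{\lambda}\ddd_i + \p$ satisfies $\inner{\s^\lambda_i, \p} = -(\lambda + \tilde{\lambda}) =: c$ independently of $i$. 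Moreover, since $\sigma_{r^\lambda_{ij}}$ is a linear isometry of $\mathbb{R}^{3,2}$ that sends $\s^\lambda_i$ to $\s^\lambda_j$ while fixing both $\p$ and $\m^\lambda$, the quantity $\mu := \inner{\s^\lambda_i, \m^\lambda}$ is also constant in $i$. Pairing
\begin{equation*}
\bb_j^\lambda = \s^\lambda_i \inner{\s^\lambda_k, \s^\lambda_j} - \s^\lambda_k \inner{\s^\lambda_i, \s^\lambda_j}
\end{equation*}
with each of $\p$ and $\m^\lambda$ gives
\begin{equation*}
\inner{\bb_j^\lambda, \p} = c\bigl(\inner{\s^\lambda_k, \s^\lambda_j} - \inner{\s^\lambda_i, \s^\lambda_j}\bigr), \qquad \inner{\bb_j^\lambda, \m^\lambda} = \mu\bigl(\inner{\s^\lambda_k, \s^\lambda_j} - \inner{\s^\lambda_i, \s^\lambda_j}\bigr),
\end{equation*}
so both perpendicularity claims collapse to the single identity $\inner{\s^\lambda_i, \s^\lambda_j} = \inner{\s^\lambda_j, \s^\lambda_k}$ for any three consecutive vertices.

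To establish this edge identity --- morally a constant arc-length property for the circle congruence $\s^\lambda$ --- I would expand $\s^\lambda_i = \lambda\aaa_i + \tilde{\lambda}\ddd_i + \p$ to obtain
\begin{equation*}
\inner{\s^\lambda_i, \s^\lambda_j} = \lambda^2\inner{\aaa_i, \aaa_j} + \lambda\tilde{\lambda}\bigl(\inner{\aaa_i, \ddd_j} + \inner{\ddd_i, \aaa_j}\bigr) + \tilde{\lambda}^2\inner{\ddd_i, \ddd_j} - 2(\lambda + \tilde{\lambda}),
\end{equation*}
and check each of the three bilinear summands separately. For $\inner{\aaa_i, \aaa_j}$ the explicit representative $\aaa_i = (\alpha/\chi)\f_i + \alpha\q + \p$ of the arc-length congruence produced in the proof of Lemma \ref{lem_arclength}, combined with the constant arc-length relation $\inner{\f_i, \f_j} = \chi$, yields a value depending only on the global constants $\alpha, \chi, \inner{\q,\q}$. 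An analogous representation of $\ddd_i$ in terms of $\f_i, \q$ and the elastic complex $\mathfrak{D}$, using $\ddd_i \perp \mathfrak{D}$ and the fact that the tangential congruence of $\ddd$ is the tangent line congruence perpendicular to $\q$, supplies the corresponding edge-independent value for $\inner{\ddd_i, \ddd_j}$. The cross-term $\inner{\aaa_i, \ddd_j} + \inner{\ddd_i, \aaa_j}$ is then handled by combining both explicit formulas and invoking the constant angle between $\aaa$ and $\ddd$ that appears in the proof of Lemma \ref{lem_darboux_elastic}.

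The main obstacle is the cross-term: the arc-length congruence $\aaa$ and the elastic congruence $\ddd$ each individually carry a transparent edge-constant structure, but the mixed inner product is constant only because both the arc-length condition on $f$ and the elastic condition $\ddd \perp \mathfrak{D}$ hold simultaneously, so this is precisely where the full constrained elastic hypothesis enters. An alternative, less computational route would exploit Theorem \ref{thm_darboux_trafos}: its Lie inversion is a linear isometry identifying the circle congruence $\s^\lambda$ with a Darboux transform curve $\g^\lambda$ pointwise, so it carries $\bb_j^\lambda$ to the analogous vector built from $\g^\lambda$; if one separately shows that $\g^\lambda$ has constant arc-length in a suitable space form, Lemma \ref{lem_arclength} then supplies a three-dimensional subspace containing those images, and applying the inverse Lie inversion transfers the conclusion back to all $\bb_j^\lambda$.
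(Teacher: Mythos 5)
Your reduction --- pair $\bb_j^\lambda$ against two fixed vectors and observe that everything collapses to the edge identity $\inner{\s_i^\lambda,\s_j^\lambda}=\inner{\s_j^\lambda,\s_k^\lambda}$ --- is the right skeleton, and it is essentially the paper's. But the perpendicular pair $\{\p,\m^\lambda\}$ is the wrong one, and the obstacle you flag in the cross-term is not a technicality to be absorbed: it is fatal to that choice. With your normalization $\s_i^\lambda=\lambda\aaa_i+\tilde{\lambda}\ddd_i+\p$, $\inner{\aaa_i,\p}=\inner{\ddd_i,\p}=-1$, $\inner{\f_i,\q}=-1$, one gets $\inner{\aaa_i,\ddd_j}=\alpha\inner{\q,\ddd_j}+\inner{\p,\ddd_j}$ (the $\f_i$-term dies because $d_j\in\mathcal{P}_j^\pm$ passes through $f_i$), and $\inner{\q,\ddd_j}/\inner{\p,\ddd_j}$ is by (\ref{equ_formula_curvature}) the geodesic curvature of the elastic circle $d_j$, i.e.\ half the discrete curvature $\kappa_j$ of the curve. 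So the cross-term equals $-\tfrac{\alpha}{2}(\kappa_i+\kappa_j)$ plus a constant, which is edge-independent only if $\kappa$ is $2$-periodic; the constant intersection angle $\inner{\aaa_i,\ddd_i}$ at a single vertex cannot repair an inner product taken across an edge. Hence the edge identity fails for your choice of homogeneous coordinates, and correspondingly $\bb_j^\lambda$ is \emph{not} perpendicular to $\p$ or to $\m^\lambda$ individually --- only to the combination $\overline{\m}^\lambda\in\spann{\m^\lambda,\p}$, which alone yields a $4$-dimensional, not $3$-dimensional, containing subspace. Note that the conclusion of the lemma is insensitive to rescalings $\s_i^\lambda\mapsto\mu_i\s_i^\lambda$ (they multiply $\bb_j^\lambda$ by $\mu_i\mu_j\mu_k$), so the edge identity can hold for one normalization and fail for another; yours is one for which it fails.

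The paper's proof fixes exactly these two points: the second perpendicular direction is taken to be $\q+\nu\p$, the linear complex containing the tangential circle congruence $\ttt^\lambda$ of $\s^\lambda$ (Lemma~\ref{lem_darboux_elastic}), and $\s^\lambda$ is normalized via the tangential representation $\s_j^\lambda=\f_i-\frac{\inner{\f_i,\overline{\m}^\lambda}}{\inner{\ttt_{ij}^\lambda,\overline{\m}^\lambda}}\ttt_{ij}^\lambda$. Since $\ttt_{ij}^\lambda\perp\f_i,\f_j$ and $\ttt_{ij}^\lambda$ is isotropic, the edge identity $\inner{\s_i^\lambda,\s_j^\lambda}=\inner{\f_i,\f_j}$, constant by Lemma~\ref{lem_arclength}(iii), drops out in one line with no term-by-term expansion and no cross-term; and $\inner{\s^\lambda,\q+\nu\p}$ is constant in these coordinates, so $\bb_j^\lambda\propto\s_i^\lambda-\s_k^\lambda\perp\overline{\m}^\lambda,\ \q+\nu\p$. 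Your fallback route through Theorem~\ref{thm_darboux_trafos} is also not available: Proposition~\ref{prop_again_constrained} derives the constant arc-length of the Darboux transform $g^\lambda$ \emph{from} the present lemma, so assuming it here would be circular.
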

\begin{proof}
We fix homogeneous coordinates $\inner{\f_i, \q} = -1$ for the curve points~$f$. By Lemma~\ref{lem_darboux_elastic}, any circle congruence~$\s^\lambda \perp \overline{\m}^\lambda$ comes together with a tangential circle congruence~$\ttt^\lambda \perp \q + \nu \p$ for a suitable constant $\nu \in \mathbb{R}$. 

If we write the circles $s^\lambda$ as
\begin{equation*}
\s_j^\lambda=\f_i - \frac{\inner{\f_i,\overline{\m}^\lambda}}{\inner{\ttt_{ij}^\lambda, \overline{\m}^\lambda}} \ttt_{ij}^\lambda = \f_k - \frac{\inner{\f_k,\overline{\m}^\lambda}}{\inner{\ttt_{jk}^\lambda, \overline{\m}^\lambda}} \ttt_{jk}^\lambda,
\end{equation*}
for those homogeneous coordinates we have $\xi:=\inner{\s_i^\lambda, \s_j^\lambda}= \inner{\f_i, \f_j}\equiv const.$ due to Lemma~\ref{lem_arclength}(iii). Thus, it follows that
\begin{equation*}
\bb_{j}^\lambda:=\s_i^\lambda \inner{\s_k^\lambda,\s_j^\lambda} - \s_k^\lambda \inner{\s_i^\lambda,\s_j^\lambda} \perp \q + \nu \p, \overline{\m}^\lambda.
\end{equation*}
\end{proof}
%
\noindent Recall from Subsection~\ref{subsect_mDarboux} that various data of the $(\m)$-type Darboux pairs under consideration are related by a fixed Lie inversion. This is the underlying reason that $(\m^\lambda)$-type Darboux transforms of discrete constrained elastic curves provide are again constrained elastic. Note, however, that the space form is typically changing.
%
\begin{prop}\label{prop_again_constrained}
Let $f$ be a discrete constrained elastic curve in the space form~$\Q$. All real and non-circular $(\m^\lambda)$-type Darboux transforms induced by the circle congruences $s^\lambda$ defined in (\ref{equ_s_lambda}) are again constrained elastic in an appropriate space form. 
\end{prop}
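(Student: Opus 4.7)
My plan is to verify both defining conditions of Definition~\ref{def_constr_elastic} for each Darboux transform $g^\pm$---namely, constant arc-length in some space form $\hat{\Q}$ and the existence of an elastic circle congruence lying in a linear complex---by exploiting the Lie-inversion correspondence between the data of $f$ and $g^\pm$ described at the end of Subsection~\ref{subsect_mDarboux}. The constrained elastic structure of $f$ will be transported to $g^\pm$ via this Lie inversion, supplemented by the explicit formulas of Theorem~\ref{thm_darboux_trafos}.

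\textbf{Step~1 (arc-length).} First I would write $\g^\pm_i = \s^\lambda_i + \bb^\pm$ with $\bb^\pm \in \spann{\overline{\m}^\lambda, \p}$ constant, as in Theorem~\ref{thm_darboux_trafos}. Two useful constancies then intervene: $\inner{\s^\lambda_i, \s^\lambda_j}$ is constant along the curve (from the proof of Lemma~\ref{lem_arc_length_congr_elastic}), and $\inner{\s^\lambda_i, \bb^\pm}$ is constant because $\s^\lambda_i \perp \overline{\m}^\lambda$ and $\inner{\s^\lambda_i, \p}$ is normalized. A routine expansion then yields
\[
\g_i \inner{\g_k, \g_j} - \g_k \inner{\g_i, \g_j} = \bb_j^\lambda + C (\s_i^\lambda - \s_k^\lambda),
\]
where the first summand is perpendicular to $\hat{\q} := \q + \nu \p$ by Lemma~\ref{lem_arc_length_congr_elastic}, and the second lies in $\spann{\f_i, \f_k} \subset \q^\perp \cap \p^\perp$, hence also perpendicular to $\hat{\q}$. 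Applying Lemma~\ref{lem_arclength}(ii) then identifies $\hat{\Q}$ (determined by $\hat{\q}$) as the space form in which $g^\pm$ has constant arc-length.

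\textbf{Step~2 (elastic circle congruence and elastic complex).} Next I would invoke the Lie-inversion dictionary: $\sigma_{b^\pm}$ maps the curve points $f$ to a circle congruence $\hat{\cc}^\pm := \sigma_{b^\pm}(\f) \in \hat{\mathcal{P}}^\pm$ evolved along $g^\pm$, and the tangential circle congruence $\ttt^\lambda$ of $s^\lambda$ to the tangential congruence $\sigma_{b^\pm}(\ttt^\lambda)$ of $\hat{c}^\pm$. By Lemma~\ref{lem_darboux_elastic}, $\ttt^\lambda$ is perpendicular to some $\tilde{\q}^\lambda \in \spann{\q, \p}$; an inner-product computation using $\bb^\pm \in \spann{\overline{\m}^\lambda, \p}$ shows $\sigma_{b^\pm}(\tilde{\q}^\lambda)$ is proportional to $\hat{\q}$ modulo $\spann{\overline{\m}^\lambda, \p}$. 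Hence $\sigma_{b^\pm}(\ttt^\lambda)$ is (up to scalars and multiples of the point complex) the tangent line congruence of $g^\pm$ in $\hat{\Q}$, identifying $\hat{\cc}^\pm$ as the elastic circle congruence of $g^\pm$. The elastic complex $\hat{\mathfrak{D}}^\pm$ is then obtained by Lie-transforming the elastic complex $\mathfrak{D}$ of $f$ (satisfying $\ddd \perp \mathfrak{D}$) and adjusting within $\spann{\overline{\m}^\lambda, \p}$, producing $\hat{\cc}^\pm_i \perp \hat{\mathfrak{D}}^\pm$ for all $i$ and thereby completing the proof.

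\textbf{Main obstacle.} The hard part will be the careful identification in Step~2 of $\sigma_{b^\pm}(\tilde{\q}^\lambda)$ with $\hat{\q}$, and of the Lie image of $\mathfrak{D}$ with a valid $\hat{\mathfrak{D}}^\pm$. This requires detailed inner-product computations involving $\q$, $\p$, $\overline{\m}^\lambda \in \spann{\q, \mathfrak{D}, \p}$, and the normalization constants appearing in Lemmas~\ref{lem_arc_length_congr_elastic} and~\ref{lem_darboux_elastic}, while verifying the consistency of the scalar $\nu$ from Step~1 with the $\tilde{\q}^\lambda$ of Step~2.
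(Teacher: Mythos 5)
Your overall strategy---transporting the constrained elastic structure of $f$ to $g^\pm$ through the Lie inversion $\sigma_{b^\pm}$ and Lemma~\ref{lem_arc_length_congr_elastic}---is the same as the paper's, but Step~1 as executed has a genuine gap that lands you in the wrong space form. First, the inclusion $\spann{\f_i,\f_k}\subset\q^\perp$ is false: curve points satisfy $\inner{\f_i,\p}=0$ but not $\inner{\f_i,\q}=0$ (the latter would place them on the boundary of~$\Q$, resp.\ at infinity in the Euclidean case). Second, your expansion silently drops a term: writing $\g_i^\pm=\s_i^\lambda+\bb^\pm$ one picks up an extra summand $\bigl(\inner{\s_k^\lambda,\s_j^\lambda}-\inner{\s_i^\lambda,\s_j^\lambda}\bigr)\bb^\pm$, which only vanishes if $\inner{\s_i^\lambda,\s_j^\lambda}$ is constant in the normalization $\inner{\s_i^\lambda,\p}=-1$ required by Theorem~\ref{thm_darboux_trafos}; the constancy established in the proof of Lemma~\ref{lem_arc_length_congr_elastic} holds for the different coordinates induced by $\inner{\f_i,\q}=-1$, and $\bb^\pm\in\spann{\overline{\m}^\lambda,\p}$ is not perpendicular to $\q+\nu\p$ in general. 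Most tellingly, your conclusion would make $\g_i\inner{\g_k,\g_j}-\g_k\inner{\g_i,\g_j}$ perpendicular to $\q+\nu\p$ and to $\p$, hence to $\q$ itself, so $g^\pm$ would have constant arc-length in the \emph{original} space form $\Q$---contradicting the remark, stated just before the proposition, that the space form typically changes. The correct move is to push the conclusion of Lemma~\ref{lem_arc_length_congr_elastic} through the isometry: since $\sigma_{b^\pm}$ maps $\s^\lambda$ to $\g^\pm$ and preserves inner products, $\g_i\inner{\g_k,\g_j}-\g_k\inner{\g_i,\g_j}=\sigma_{b^\pm}(\bb_j^\lambda)$ is perpendicular to $\mathfrak{n}:=\sigma_{b^\pm}(\q+\nu\p)$, and the new space form vector is $\q^g:=\mathfrak{n}+\inner{\mathfrak{n},\p}\p$.

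In Step~2 you assert that $\hat{\cc}^\pm=\sigma_{b^\pm}(\f)$ \emph{is} the elastic circle congruence of $g^\pm$. That would require its tangential congruence $\sigma_{b^\pm}(\ttt^\lambda)$ to consist of geodesics of the new space form, i.e.\ to be perpendicular to $\q^g$; what the Lie dictionary actually gives is perpendicularity to $\mathfrak{n}$, and these differ by a multiple of $\p$, so the identification does not follow without further argument. The paper avoids this: once constant arc-length is in place, $g^\pm$ carries \emph{two} Darboux evolution maps (one from its arc-length circle congruence, one from $\sigma_{b^\pm}(\f)\in\hat{\mathcal{P}}^\pm$), so Proposition~\ref{prop_combination_s} produces a one-parameter family of circle congruences in $\hat{\mathcal{P}}^\pm$ lying in fixed linear complexes; the member intersecting the arc-length circles orthogonally is in oriented contact with the tangent line congruence and is therefore the elastic circle congruence. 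No explicit Lie transport of the elastic complex $\mathfrak{D}$ is needed.
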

\begin{proof}
Suppose that $(f, g)$ is an $(\m)$-type Darboux pair induced by the circle congruence $\s^\lambda \perp \overline{\m}$ (c.\,f.\,Theorem~\ref{thm_darboux_trafos}). Further, let $\sigma_b$ denote the Lie inversion that maps $s^\lambda$ to the curve points of the Darboux transform~$g$.  

Firstly, we prove that the Darboux transform has again constant arc-length in a space form~$\tilde{\Q}$. By above, we have
\begin{equation*}
\sigma_b(\spann{\overline{\m}^\lambda})=\spann{\p}. 
\end{equation*}
Further, using Lemma~\ref{lem_arc_length_congr_elastic}, we choose $\nu \in \mathbb{R}$ such that $\q + \nu \p \perp \s_i^\lambda \inner{\s_k^\lambda,\s_j^\lambda} - \s_k^\lambda \inner{\s_i^\lambda,\s_j^\lambda}$ for all vertices and define $\q^g:=\mathfrak{n}+ \inner{\mathfrak{n},\p} \p \perp \p$, where $\mathfrak{n}:=\sigma_b(\q + \nu \p)$. Since the Lie inversion~$\sigma_b$ preserves inner products, we conclude from Lemma~\ref{lem_arc_length_congr_elastic} that the Darboux transform~$g$ satisfies
\begin{equation*}
\g_i \inner{\g_k,\g_j} - \g_k \inner{\g_i,\g_j} \perp \q^g,
\end{equation*}
which together with Lemma~\ref{lem_arclength}(ii) shows that $g$ indeed has constant arc-length in the space form determined by the space form vector~$\q^g$.
\\\\Thus, the discrete Darboux transform~$g$ admits two $(\m^1)$-type and $(\m^2)$-type Darboux evolution maps induced by the arc-length circle congruence of $g$ and the circle congruence $\sigma_b(\f) \in \hat{\mathcal{P}}^\pm$. Therefore, by formula~(\ref{equ_further_circle_congr}), we obtain a family of circle congruences in $\hat{\mathcal{P}}^\pm$ that lie in fixed linear complexes. In particular, there exists a circle congruence that intersects the arc-length circles of $g$ orthogonally and is therefore in oriented contact with the tangent line congruence of $g$ in $\tilde{\Q}$. Hence, we have proven that $g$ is a discrete constrained elastic curve in the space form $\tilde{\Q}$.
\end{proof}
%
\noindent Applying Lemma~\ref{lem_darboux_elastic} and Proposition~\ref{prop_again_constrained} repeatedly, we conclude the following construction for holomorphic maps consisting of constrained elastic parameter lines. Examples of such discrete holomorphic maps can be found in the Figures~\ref{fig_girlande}, \ref{fig_conical} and \ref{fig_eight}.
\begin{thm}\label{thm_extension}
Any discrete constrained elastic curve in a space form~$\Q$ with elastic complex~$D$ can be extended to an infinite sequence of constrained elastic curves in various space forms that provides a discrete holomorphic map of $(M)$-type. The space form vectors and elastic complexes of all constrained elastic curves that can occur in the holomorphic map lie in $\spann{\q, \mathfrak{D}, \p} \subset \mathbb{R}^{3,2}$.
\end{thm}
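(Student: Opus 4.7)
The plan is to iterate Lemma~\ref{lem_darboux_elastic}, Theorem~\ref{thm_darboux_trafos} and Proposition~\ref{prop_again_constrained} along the direction transverse to the given constrained elastic curve~$f$, while carefully monitoring that at every step the new space form vector and elastic complex remain inside the fixed three-dimensional subspace $\mathcal{W}:=\spann{\q,\mathfrak{D},\p}\subset \mathbb{R}^{3,2}$. This will simultaneously produce the sequence of curves and verify the subspace claim.

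First I would use that $f$ already carries two Darboux evolution maps: the $(\q)$-type map induced by its arc-length circle congruence $a$, and the $(\mathfrak{D})$-type map induced by the elastic circle congruence $d$. Lemma~\ref{lem_darboux_elastic} then furnishes a one-parameter family of $(\m^\lambda)$-type Darboux evolution maps coming from the circle congruences $\mathfrak{s}^\lambda = \lambda\aaa + \tilde\lambda\ddd + \p$, and the proof of Proposition~\ref{prop_combination_s} shows that every $\m^\lambda$ lies in $\spann{\q,\mathfrak{D}}\subset\mathcal{W}$. Picking any admissible~$\lambda$ falling into the hyperbolic or parabolic case of Theorem~\ref{thm_darboux_trafos} yields a real Darboux transform $g=g^\lambda$ of~$f$, and by Proposition~\ref{prop_again_constrained} the curve $g$ is again constrained elastic in some space form $\tilde{\Q}$ with space form vector $\q^g$ and elastic complex $\mathfrak{D}^g$.

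The key step will be to verify $\q^g,\mathfrak{D}^g\in\mathcal{W}$. The Lie inversion $\sigma_b$ appearing in the proof of Proposition~\ref{prop_again_constrained} is the reflection in $\mathfrak{b}=\mu\,\overline{\m}^\lambda+(\mu\inner{\overline{\m}^\lambda,\p}-1)\p$; since $\overline{\m}^\lambda\in\mathcal{W}$, we have $\mathfrak{b}\in\mathcal{W}$, so $\sigma_b$ preserves $\mathcal{W}$. That proof defines $\q^g$ via $\sigma_b(\q+\nu\p)$ up to a correction along $\p$, hence $\q^g\in\mathcal{W}$. To locate $\mathfrak{D}^g$ I would exploit that the elastic circle congruence $d^g$ of $g$ is singled out inside the Proposition~\ref{prop_combination_s}-family generated by the arc-length congruence $a^g$ of $g$ and the congruence $\sigma_b(\f)$: the linear complex containing $a^g$ is spanned by $\q^g+\nu'\p\in\mathcal{W}$, while $\sigma_b(\f)$ lies in the complex $\sigma_b(\p)\in\mathcal{W}$, so by formula~(\ref{equ_further_circle_congr}) the complex $\mathfrak{D}^g$ hosting the elastic congruence of $g$ also lies in $\mathcal{W}$.

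With this invariance in hand, the construction iterates: for each consecutive index pair $(i,i+1)$ choose a folding parameter $\lambda^{(i,i+1)}$ and apply the above to $f^{(i)}$ to produce the next constrained elastic curve $f^{(i+1)}$, related to $f^{(i)}$ by an $(\m^{(i,i+1)})$-type Darboux transform with $\m^{(i,i+1)}\in\mathcal{W}$. Since Darboux pairs are symmetric, the same procedure runs in the opposite direction, giving a bi-infinite sequence. By definition, the resulting sequence of planar curves in $\mathbb{R}^{3,2}$, all pairwise related by $(\m^{(ij)})$-type Darboux transforms, is a discrete holomorphic map of $(M)$-type, and the inductive bookkeeping guarantees that all occurring space form vectors and elastic complexes remain inside $\spann{\q,\mathfrak{D},\p}$. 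The main obstacle I foresee is precisely this bookkeeping step for $\mathfrak{D}^g$: whereas $\q^g$ is almost explicit in Proposition~\ref{prop_again_constrained}, the new elastic complex is only implicit in the family argument of Proposition~\ref{prop_combination_s} and must be carefully identified as lying in the two complexes already known to sit inside $\mathcal{W}$ before the induction closes.
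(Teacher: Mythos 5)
Your proposal is correct and follows essentially the same route as the paper, which proves the theorem by the one-line observation that Lemma~\ref{lem_darboux_elastic} and Proposition~\ref{prop_again_constrained} can be applied repeatedly. Your explicit bookkeeping of $\q^g$ and $\mathfrak{D}^g$ inside $\spann{\q,\mathfrak{D},\p}$ via the Lie inversion $\sigma_b$ (whose complex lies in that subspace) spells out the part of the argument the paper leaves implicit, but it is the same mechanism, not a different approach.
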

\noindent We remark that this extension is not unique. For any further Darboux transform there is a 1-parameter choice that basically represents the stepsize between the two curves in the discrete holomorphic map.

Via lifted-foldings, we can therefore generate discrete isothermic nets with a family of planar or spherical curvature lines so that those parameter lines are M\"obius images of constrained elastic curves in various space forms.

Moreover, from Proposition~\ref{prop_flexible_pair} we learn that the configuration of the spheres containing the spherical parameter lines of isothermic nets generated from the holomorphic maps described in Theorem~\ref{thm_extension} is restricted: if the holomorphic map is canonically embedded in a plane~$e_0 \in \Light \subset \mathbb{R}^{4,2}$, then the spheres lie in the subspace $\spann{\mathfrak{e}_0, \hat{\q}, \hat{\mathfrak{D}}, \p}$, where $\hat{q}$ and $\hat{D}$ denote the embeddings of the space form vector and the elastic complex of the initial constrained elastic curve, respectively.

Thus, we have:

\begin{cor}
The spheres containing the curvature lines of an isothermic net obtained via lifted-folding from a discrete holomorphic map induced by a discrete constrained elastic curve lie in a 4-dimensional subspace of $\mathbb{R}^{4,2}$. 
\end{cor}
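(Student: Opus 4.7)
The plan is to show explicitly that the subspace $V := \spann{\mathfrak{e}_0, \hat{\q}, \hat{\mathfrak{D}}, \mathfrak{p}} \subset \mathbb{R}^{4,2}$ is invariant under every folding inversion, contains the initial sphere (the embedding plane) and therefore contains every sphere produced by iterative lifted-folding.

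First I would unpack the setup. The holomorphic map is a planar circular net of $(M)$-type in $\mathbb{R}^{3,2}$, given by a sequence of Darboux transforms built from the initial constrained elastic curve in $\Q$. By Theorem~\ref{thm_extension}, every resulting curve is constrained elastic in some space form whose space form vector and elastic complex lie in $\spann{\q, \mathfrak{D}, \mathfrak{p}}$. Combined with Proposition~\ref{prop_combination_s}, which identifies the $\mathfrak{m}^{(ij)}$-vectors controlling each Darboux pair with linear combinations of the adjacent space form vector and elastic complex (both in $\spann{\q,\mathfrak{D},\mathfrak{p}}$), we conclude that \emph{all} edge-vectors $\mathfrak{m}^{(ij)}$ of the planar $(M)$-type net sit in $\spann{\q, \mathfrak{D}, \mathfrak{p}} \subset \mathbb{R}^{3,2}$.

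Next I would pass to $\mathbb{R}^{4,2}$ via the canonical embedding recalled in the Appendix, sending the plane to a sphere $\mathfrak{e}_0 \in \Light$ and the vectors $\q, \mathfrak{D}$ to $\hat{\q}, \hat{\mathfrak{D}}$. By Corollary~\ref{cor_unfolding} the embedded net is a sequence of $(\mathfrak{e}_0, \hat{\mathfrak{m}}^{(ij)})$-type Ribaucour transforms. By Theorem~\ref{thm_mflexible} together with Remark~\ref{rem_alt_folding}, every admissible folding inversion $\sigma_{n_{ij}}$ arising in the lifted-folding procedure is the $M$-inversion in a linear complex
\[
\mathfrak{n}_{ij} \in \spann{\mathfrak{e}_0,\,\hat{\mathfrak{m}}^{(ij)},\,\mathfrak{p}} \subset \spann{\mathfrak{e}_0,\,\hat{\q},\,\hat{\mathfrak{D}},\,\mathfrak{p}} = V.
\]
Since an $M$-inversion in a vector $\mathfrak{n} \in V$ maps $V$ to itself (because the reflection formula only adds a multiple of $\mathfrak{n}$), each $\sigma_{n_{ij}}$ preserves $V$.

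Finally, all spheres carrying the curvature lines of the lift-folded net have the form $\bar{\mathfrak{s}}_j = \sigma_{ij}\circ\cdots\circ\sigma_{12}(\mathfrak{e}_0)$ as in Theorem~\ref{thm_mflexible}, starting from the single initial sphere $\mathfrak{e}_0 \in V$ and acted on by compositions of the inversions above; they therefore all lie in $V$, which is at most $4$-dimensional. The main obstacle is largely bookkeeping: verifying that the iterated folding inversions, whose defining vectors $\mathfrak{n}_{jk}$ involve the successively transformed spheres $\bar{\mathfrak{s}}_j, \hat{\mathfrak{s}}_k$, remain inside $V$; but this is immediate once one observes inductively that each of these spheres is an image of $\mathfrak{e}_0$ under inversions in vectors already known to lie in $V$, so that $\mathfrak{n}_{jk}$ is itself a combination of vectors from $V$.
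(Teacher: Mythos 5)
Your argument is correct and follows essentially the same route as the paper: the paper does not give a formal proof but derives the corollary in the paragraph immediately preceding it, observing via Theorem~\ref{thm_extension} and the $(\m^\lambda)$-type analysis that all complexes $\m^{(ij)}$ lie in $\spann{\q,\mathfrak{D},\p}$, so that after the canonical embedding all folding inversions fix the subspace $\spann{\mathfrak{e}_0,\hat{\q},\hat{\mathfrak{D}},\p}$ containing $\mathfrak{e}_0$. Your explicit inductive bookkeeping of the iterated inversions is a welcome elaboration of what the paper leaves implicit, but it is not a different approach.
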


Theorem~\ref{thm_extension} raises the obvious question of whether spherical curvature lines of discrete isothermic nets can be always interpreted as discrete constrained elastic curves in space forms in the sense of Definition~\ref{def_constr_elastic}.  To answer this question we have to investigate whether the Construction~2 given in Subsection~\ref{subsect_mDarboux} always provides a constrained elastic curve in a space form (see Corollary~\ref{cor_constr2}).

\bigskipp Suppose we have given a discrete constrained elastic curve in a space form~$\Q$ with elastic complex~$D$. Recall that this curve then has Darboux evolution maps of $(D)$-type and $(\q)$-type (see Lemma~\ref{lem_darboux_elastic}). Let us consider now a part of this discrete curve consisting of five consecutive points and the five corresponding elastic circles. This data uniquely determines the elastic complex~$D$ and the space form in which those five points have constant arc-length (c.\,f.\,Lemma~\ref{lem_arclength}). However, those five points only provide three arc-length circles, which provides us with the 1-parameter freedom to extend this data to a discrete curve that has two Darboux evolution maps of $(\D)$- and $(\m_2)$-type. 

If $\spann{\m_2}=\spann{\q}$, then the extension gives the constrained elastic curve we started with. If not, then we generate a discrete curve that does not have constant arc-length in a space form. An example is illustrated in Figure~\ref{fig_nonarclength}.
%
%
\begin{figure}
\begin{minipage}{4cm}
\hspace*{-1.3cm}\includegraphics[scale=0.4]{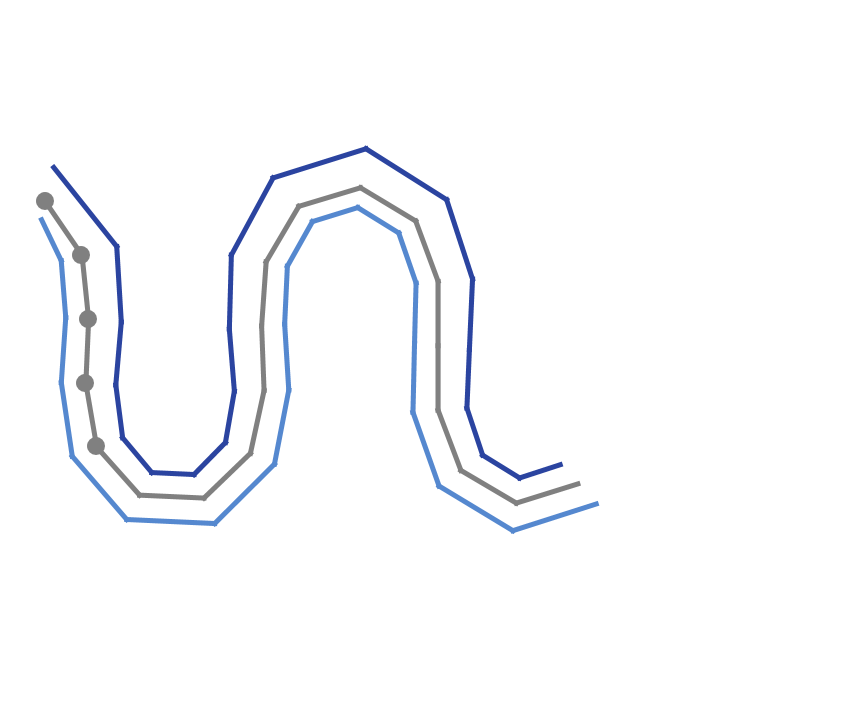}
\end{minipage}
\begin{minipage}{4cm}
\hspace*{-0.5cm}\includegraphics[scale=0.4]{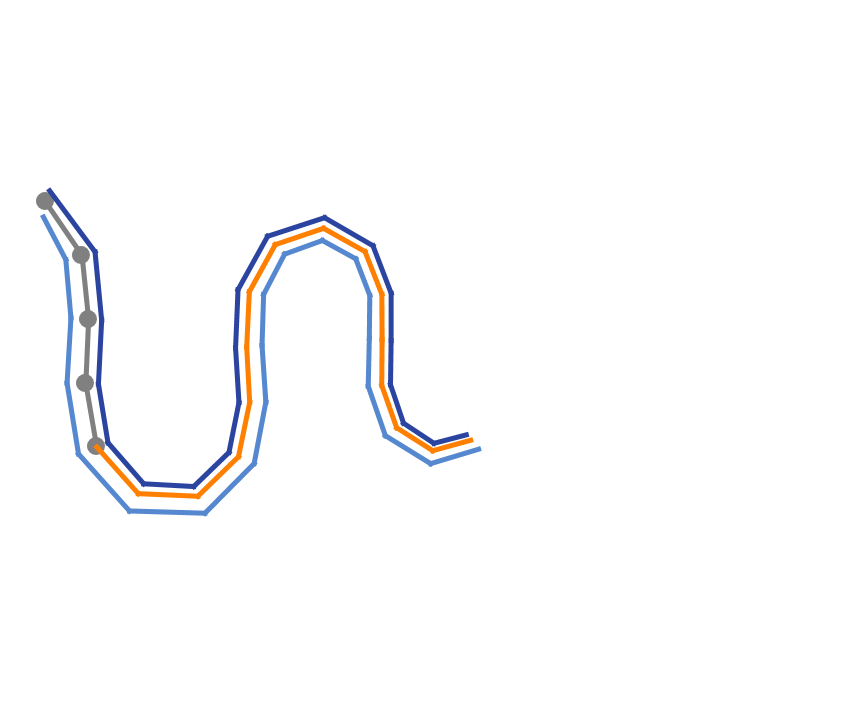}
\end{minipage}
\begin{minipage}{4cm}
\hspace*{-0.6cm}\includegraphics[scale=0.4]{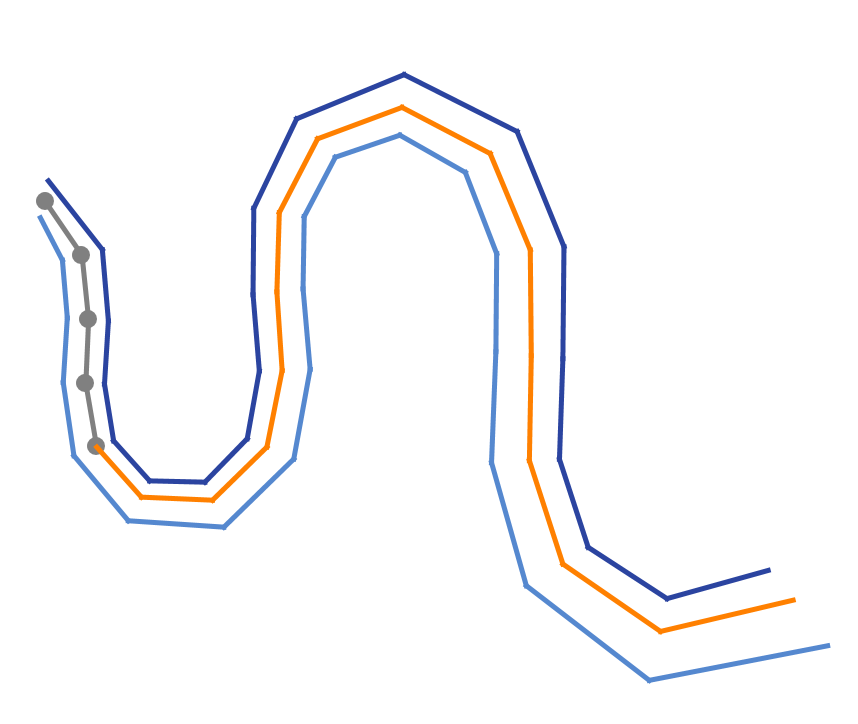}
\end{minipage}
\caption{Three examples of sequences of $(\m^{(ij)})$-type Darboux transforms that all contain the same five gray curve points. \emph{Left.} The middle gray curve has constant arc-length in Euclidean space and is a discrete elastic curve. \emph{Middle and Right.} Using Construction~2, the five gray points are extended differently. These curves do not have constant arc-length in a space form.}\label{fig_nonarclength}
\end{figure}
%
%
\\\\To sum up, we can say that a planar discrete curve admitting two Darboux transforms of $(\m_1)$- and $(\m_2)$-type is not necessarily parametrized by constant arc-length and therefore does not need to be discrete constrained elastic in the sense of Definition~\ref{def_constr_elastic}.

However, we conclude that the discrete curves obtained by Construction~2 are “piecewise” constrained elastic in an appropriate space form: any five consecutive points determine a space form, where this part of the curve has constant arc-length. For this space form we obtain distinct geodesic edges of constant length. By construction, the five circles tangent to those geodesic edges lie in a fixed linear complex and thus give rise to an elastic circle congruence. 

Note that being piecewise constrained elastic is indeed a condition for a discrete curve, since the five tangential circles do not have to lie in a 4-dimensional subspace of $\mathbb{R}^{3,2}$.
%
%
\subsection{Isothermic nets with a family of planar lines of curvature}\label{subsect_iso_planar} We have seen in Subsection~\ref{subsect_planar} that planar parameter lines for circular nets arise from planar circular nets of $(\m^{(ij)})$-type if and only if all linear complexes satisfy $\m^{(ij)} \perp \q_0$, where $\q_0$ is the space form vector that determines a Euclidean space form. Hence, by Theorem~\ref{thm_iso_folding}, discrete isothermic nets with a family of planar curvature lines arise from discrete holomorphic nets of this type.

In particular, if we use Theorem~\ref{thm_extension} and build the holomorphic map from an initial discrete constrained elastic curve, planarity of the parameter lines restricts the choice of the starting curve~$f$: the space form vector~$\q$, as well as the elastic complex~$D$ have to fulfill $\q, \D \perp \q_0$. This is because suitable circle congruences along~$f$ induce $(\m^\lambda)$-type Darboux transforms with $\m^\lambda \in \spann{\q, \D, \p}$ (see (\ref{equ_comp_pencil})). 
By Proposition~\ref{prop_again_constrained}, this property also holds for the other constrained elastic curves in the holomorphic map.

\bigskipp Therefore, we conclude that admissible discrete starting curves and their Darboux transforms are constrained elastic either in Euclidean space or in a Poincar\'e half-plane modelling a hyperbolic space form. Moreover, since all folding axes also lie in $\spann{\q, \D, \p}$, those generically lie in a circle pencil. Thus, they either intersect in one common point or are parallel. Similar to the smooth case \cite{adam,bobenko2023isothermic, Darboux_book_iso}, this outlined construction gives examples of conical and cylindrical type; see also Figure~\ref{fig_conical}.

\begin{figure}
\begin{minipage}{4cm}
\hspace*{-2cm}\includegraphics[scale=0.6]{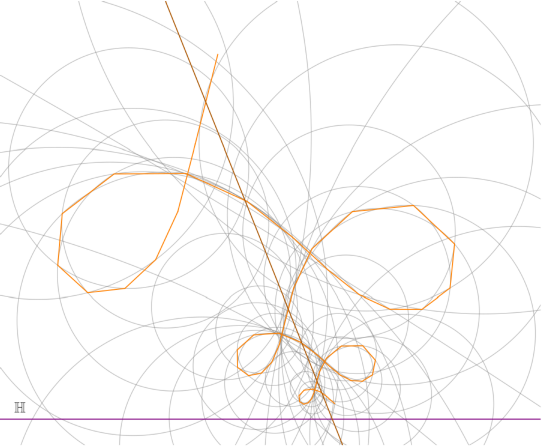}
\end{minipage}
\begin{minipage}{4cm}
\hspace*{0cm}\vspace*{0.8cm}\includegraphics[scale=0.6]{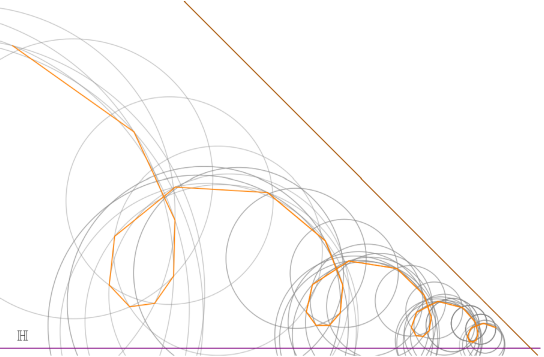}
\end{minipage}
\begin{minipage}{4cm}
\hspace*{2cm}\vspace*{0.25cm}\includegraphics[scale=0.5]{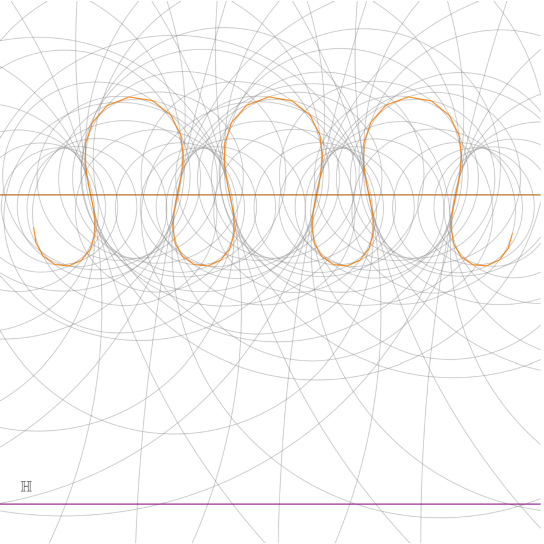}
\end{minipage}
%
\begin{minipage}{4cm}
\hspace*{-2cm}\includegraphics[scale=0.6]{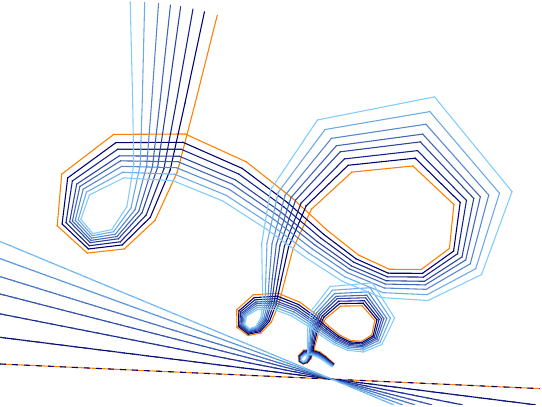}
\end{minipage}
\begin{minipage}{4cm}
\hspace*{0cm}\vspace*{1cm}\includegraphics[scale=0.6]{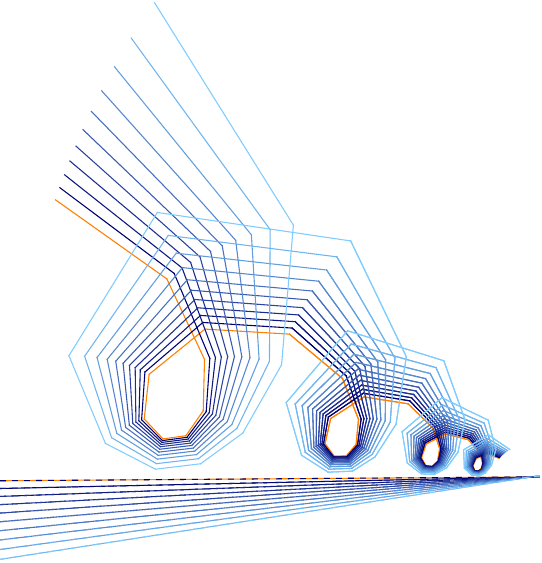}
\end{minipage}
\begin{minipage}{4cm}
\hspace*{2cm}\includegraphics[scale=0.5]{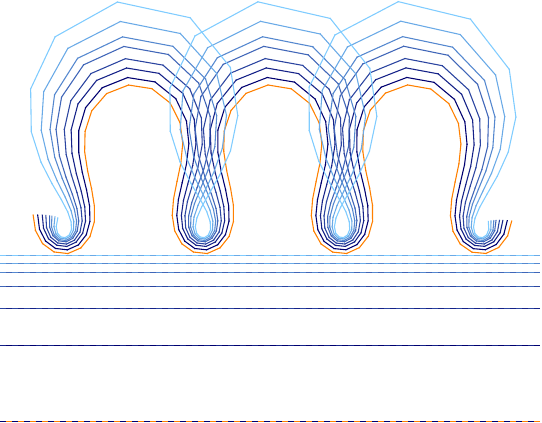}
\end{minipage}
\caption{\emph{First row.} Three discrete constrained elastic curves in the Poincar\'e half-plane model. The elastic circle congruence (gray) intersects a directrix at a (possibly imaginary) constant angle.
\emph{Second row.} Holomorphic maps of $(M)$-type that give rise to isothermic nets with a family of parameter lines in Euclidean planes via lifted-foldings. The folding axes (dashed) lie in the line pencil determined by the hyperbolic boundary and the corresponding directrix.}
\label{fig_conical}
\end{figure}
%
%
\section{Special solutions: quasi-periodicity, symmetries and closedness}\label{sect_special}
\noindent We have established a connection between specific discrete holomorphic maps and isothermic circular nets with a family of spherical parameter lines via lifted-folding. This section aims to show how this technique can be used to solve specific monodromy problems. 
%
%
\subsection{(Quasi-)periodic discrete holomorphic maps related to isothermic nets with spherical curvature lines}
We say that a map $w: \mathbb{Z} \to \Light \subset \mathbb{R}^{3,2}$ is \emph{T-quasi-periodic} if there exists a choice of homogeneous coordinates $\mathfrak{w} \in w$, a constant $T \in \mathbb{R}^\times$ and a Lie transformation $L \in O(3,2)$, its \emph{monodromy}, such that $\mathfrak{w}_{(i + T)} = L( \mathfrak{w}_i)$ for all $i \in \mathbb{Z}$. 
Furthermore, the map~$w$ is called \emph{T-periodic}, if $\mathfrak{w}_{(i + T)} = \mathfrak{w}_i$ for a suitable choice of homogeneous coordinates. We say that a map is \emph{quasi-periodic with respect to a space form~$\Q$}, if it is quasi-periodic and the monodromy $L$ is a space form motion in $\Q$.

In particular, a T-periodic discrete curve $f: \mathbb{Z} \to \Light$ is closed, while a quasi-periodic discrete curve is often also called twisted polygon \cite{affolter2023integrable, AFIT_dynamics}.
\\\\Discrete Darboux transformations that preserve (quasi)-periodicity (with the same monodromy) of discrete curves have recently attracted a lot of attention. In particular, their integrability has been studied from different perspectives (see \cite{affolter2023integrable, AFIT_dynamics, CHO2023102065, periodic_conformal}). 

We point out that the $(M)$-type Darboux transformations used to generate the discrete holomorphic maps of $(M)$-type may give explicit examples in this realm (c.\,f.\,Figure~\ref{fig_quasi_H}): quasi-periodicity of an initial discrete curve and its associated circle congruences in $\mathcal{P}^\pm$ is propagated to their $(\m)$-type Darboux transforms. This simply follows from the fact that the curve points of any such special Darboux transform are obtained by a Lie inversion from the quasi-periodic circle congruence in $\mathcal{P}^\pm$ of the initial curve that induces the transform (see Subsection~\ref{subsect_mDarboux}).
%

\bigskipp Explicit examples of quasi-periodic initial curves are provided by discrete elastic curves in Euclidean space as introduced in \cite{BS_discreteelastic}. Those are included in the class of constrained elastic curves and therefore give rise to discrete holomorphic maps of $(M)$-type by Theorem~\ref{thm_extension}. 

As found in \cite{fairing_elastica}, a discrete elastic curve $\gamma$ in Euclidean space with constant arc-length~$h>0$ can be explicitly described by Jacobi elliptic functions via 
\begin{equation}\label{equ_expl_elastic}
\begin{aligned}
\gamma_{n+1}-\gamma_n = h \begin{pmatrix}f(n)f(n+1) - g(n)g(n+1) 
\\f(n)g(n+1) + g(n)f(n+1) \end{pmatrix}&, \text{where } z \in \mathbb{R}, \ q \in [ 0, 2 \pi ] \text{ and }
\\[10pt] \text{for }  0< k < 1: \ f(n)&:=\text{cn}(\frac{zn+q}{k}; k),  
\\\phantom{ \text{for }  0< k < 0: } \ g(n)&:=\text{sn}(\frac{zn+q}{k}; k),
\\[10pt] \text{for }  1< k:  \ f(n)&:=\text{dn} (zn+q; k), 
\\\phantom{ \text{for }  1< k: xxx } \ g(n)&:=k \text{sn} (zn+q; k).
\end{aligned}
\end{equation}
Moreover, the curvature of those curves is given by 
\begin{equation*}
\begin{aligned}
\text{for }  1< k: \ \kappa(n)&= \frac{2}{h}\frac{\text{sn}(\frac{z}{k};k)}{\text{cn}(\frac{z}{k};k)}\text{dn}(\frac{zn+q}{k};k),
\\[10pt]\text{for }  0< k < 1: \ \kappa(n)&= \frac{2k}{h}\frac{\text{sn}(z;k)}{\text{dn}(z;k)}\text{cn}(zn+q;k).
\end{aligned}
\end{equation*}
Thus, periodic curvature functions are obtained, for any $r \in \mathbb{R}^\times$, by defining
\begin{equation*}
\begin{aligned}
0<k<1: \ z:= \frac{4k K(k)}{r},
\\[6pt] 1<k: \ z:= \frac{4 K(\frac{1}{k})}{r}.
\end{aligned}
\end{equation*}
By (\ref{equ_expl_elastic}), those periodic curvature functions induce (quasi)-periodic discrete curves. Moreover, via the arc-length and the elastic circle congruence, the quasi-periodicity is transferred to the circle congruences in $\mathcal{P}^\pm$ that induce the $(m)$-type Darboux transformations. Generically, in this case the monodromy is given by a translation along the directrix, which is a Euclidean line. 

As a special case, we also obtain periodic curves, namely discrete versions of the elastic eight in Euclidean space (see Figure~\ref{fig_eight}). 
\begin{figure}
\begin{minipage}{4cm}
\hspace*{-2.5cm}\includegraphics[scale=0.2]{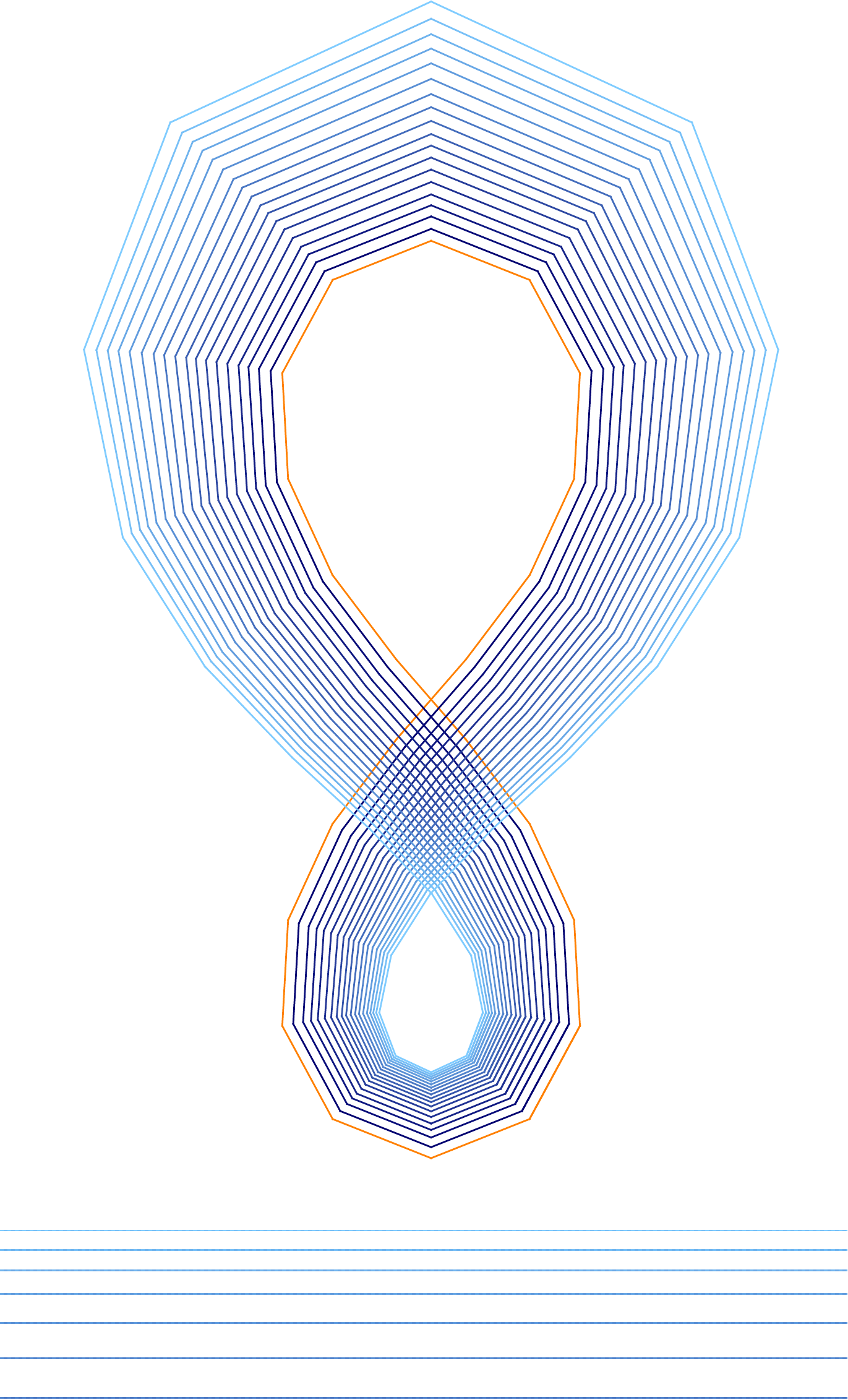}
\end{minipage}
\begin{minipage}{4cm}
\hspace*{-1.9cm}\includegraphics[scale=0.45]{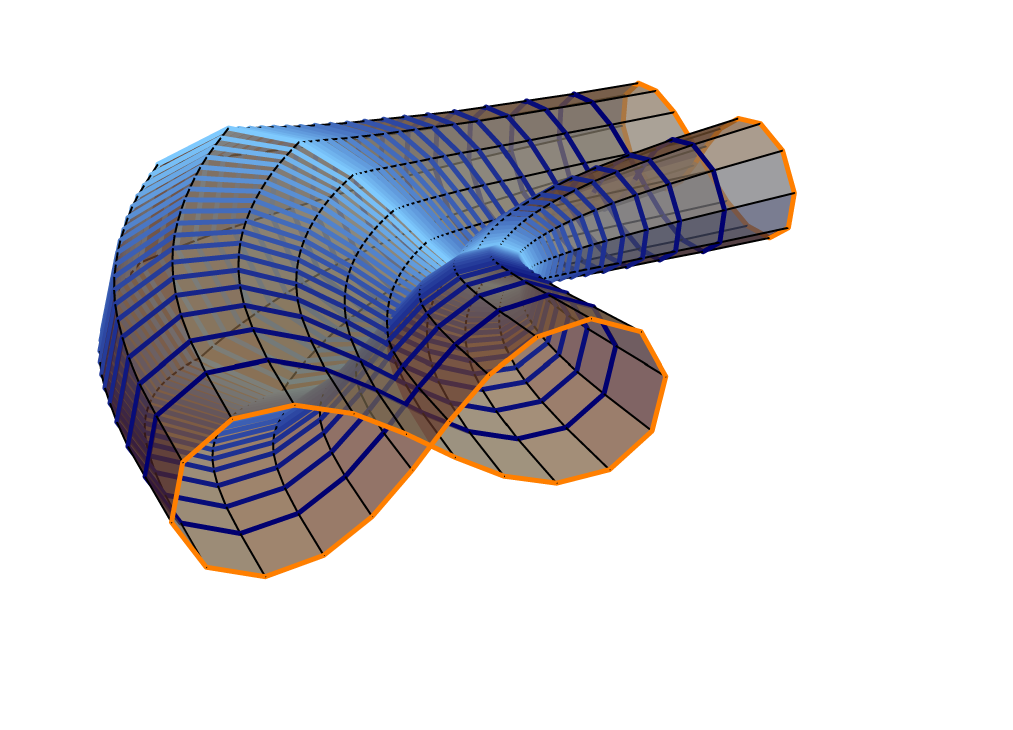}
\end{minipage}
\begin{minipage}{3cm}
\hspace*{0.65cm}\includegraphics[scale=0.3]{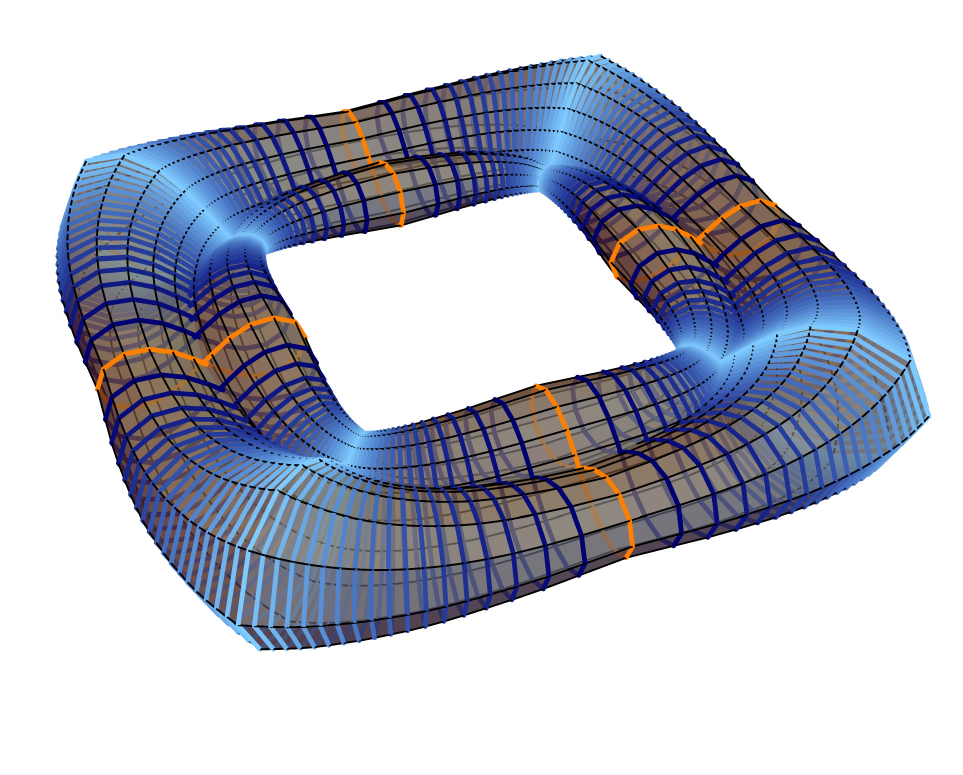}
\hspace*{1.25cm}\includegraphics[scale=0.24]{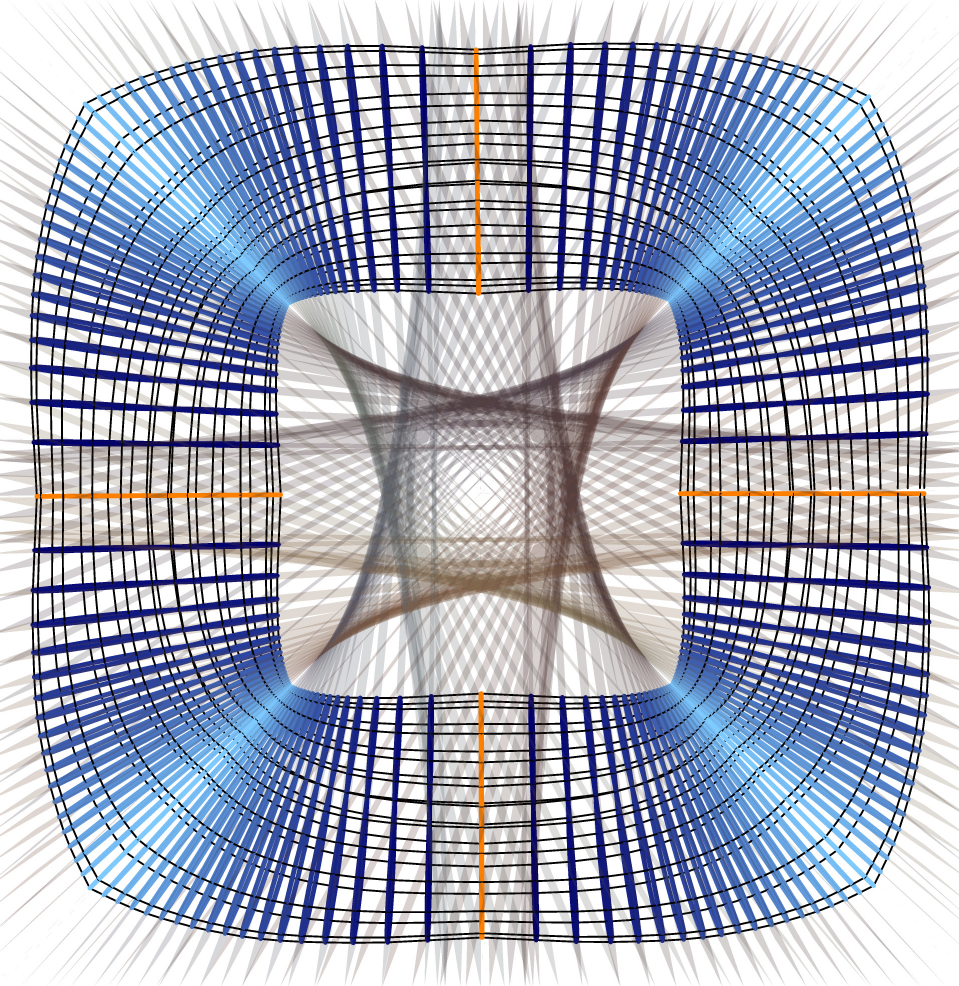}
\end{minipage}
\caption{\emph{Left.} A discrete version of the Euclidean elastic eight as obtained in \cite{fairing_elastica} (orange) and its extension to a discrete $(M)$-type holomorphic map. The corresponding folding axes are given by parallel lines. \emph{Middle.} Lifted-folding then yields an extended fundamental piece with a reflectional symmetry in a Euclidean plane. \emph{Right.} A discrete isothermic torus with a family of planar lines of curvature compound of four extended fundamental pieces.}\label{fig_eight}
\end{figure}

\bigskipp These periodic curvature functions also give rise to discrete elastic curves in other space forms. As in the Euclidean case, those curves are then quasi-periodic with respect to the considered space form (see Figure~\ref{fig_quasi_H} for an example in hyperbolic space). 
%
%
\begin{figure}
\begin{minipage}{5cm}
\hspace*{-3.5cm}\includegraphics[scale=0.48]{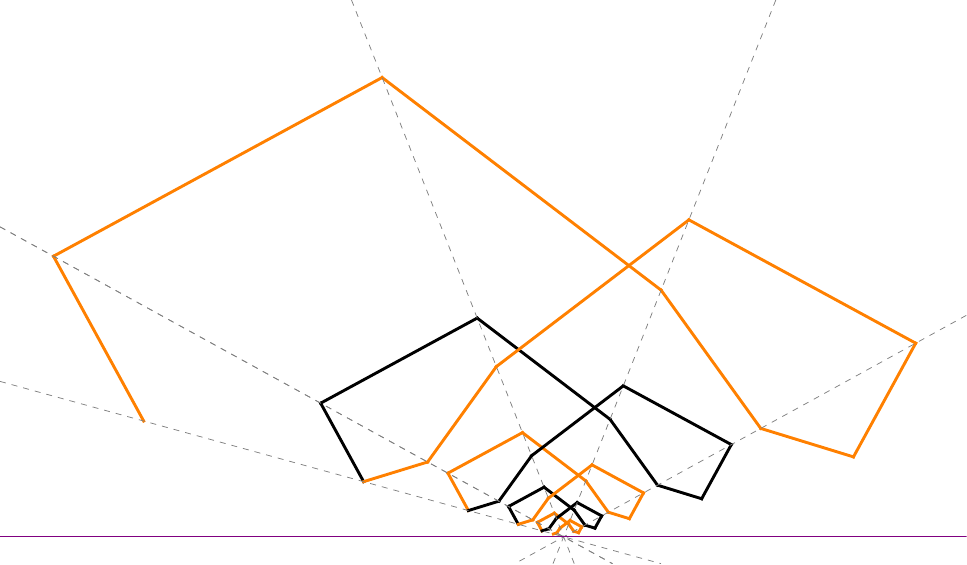}
\end{minipage}
\begin{minipage}{5cm}
\hspace*{-1cm}\includegraphics[scale=1]{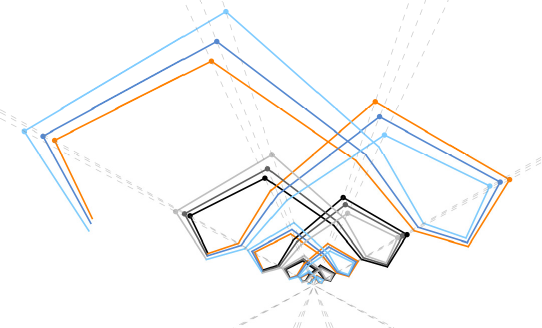}
\end{minipage}
\caption{A quasi-periodic discrete elastic curve (orange) in the Poincar\'e half-plane model with its directrix and its extension to a discrete holomorphic map of $(M)$-type. The monodromy is given by a homothety whose center is the intersection point of the hyperbolic boundary with the directrix.}\label{fig_quasi_H}
\end{figure}
%
%
\begin{figure}
\hspace*{-1cm}\begin{minipage}{4cm}
\hspace*{-2.6cm}\includegraphics[scale=0.17]{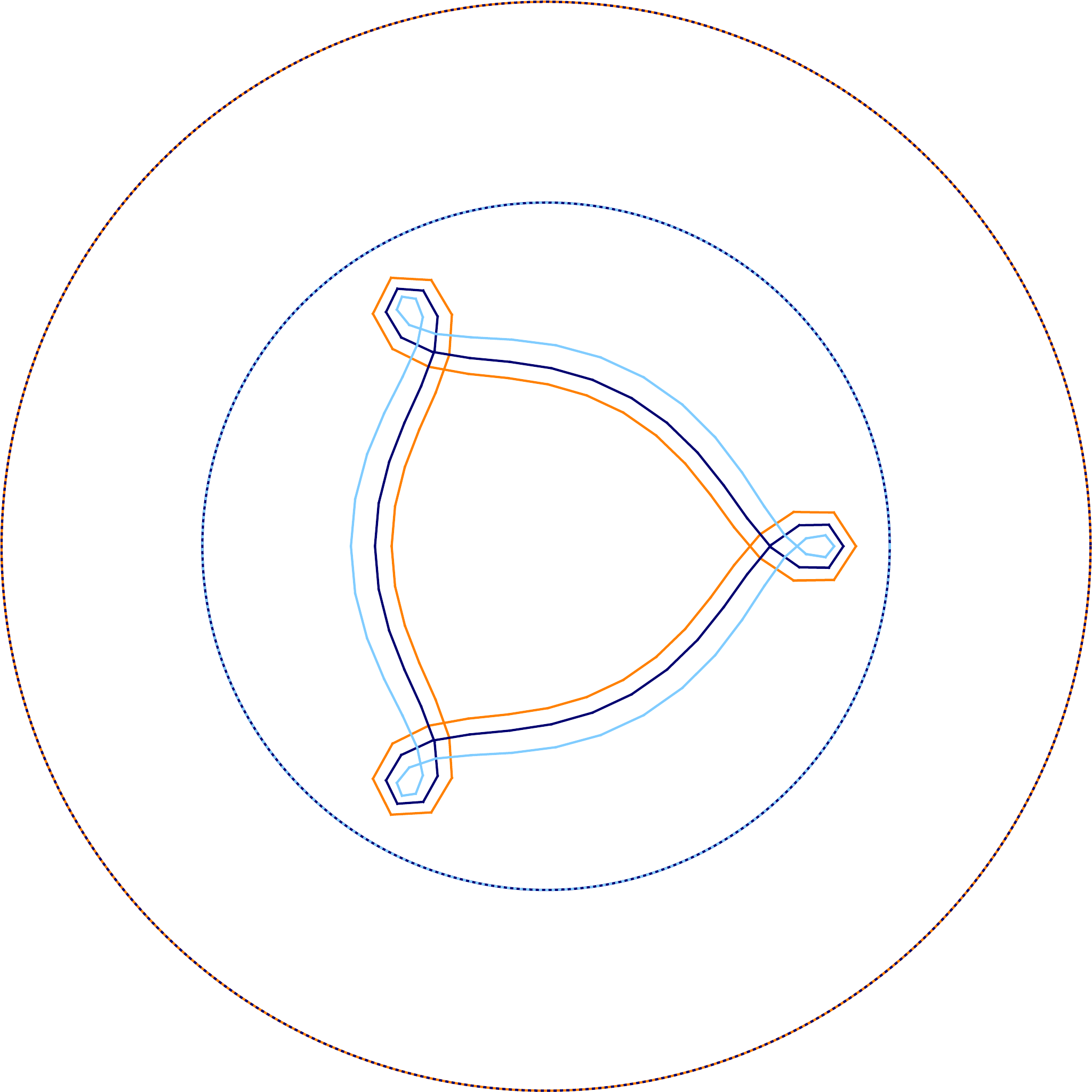}
\\\hspace*{-3.9cm}\includegraphics[scale=0.36]{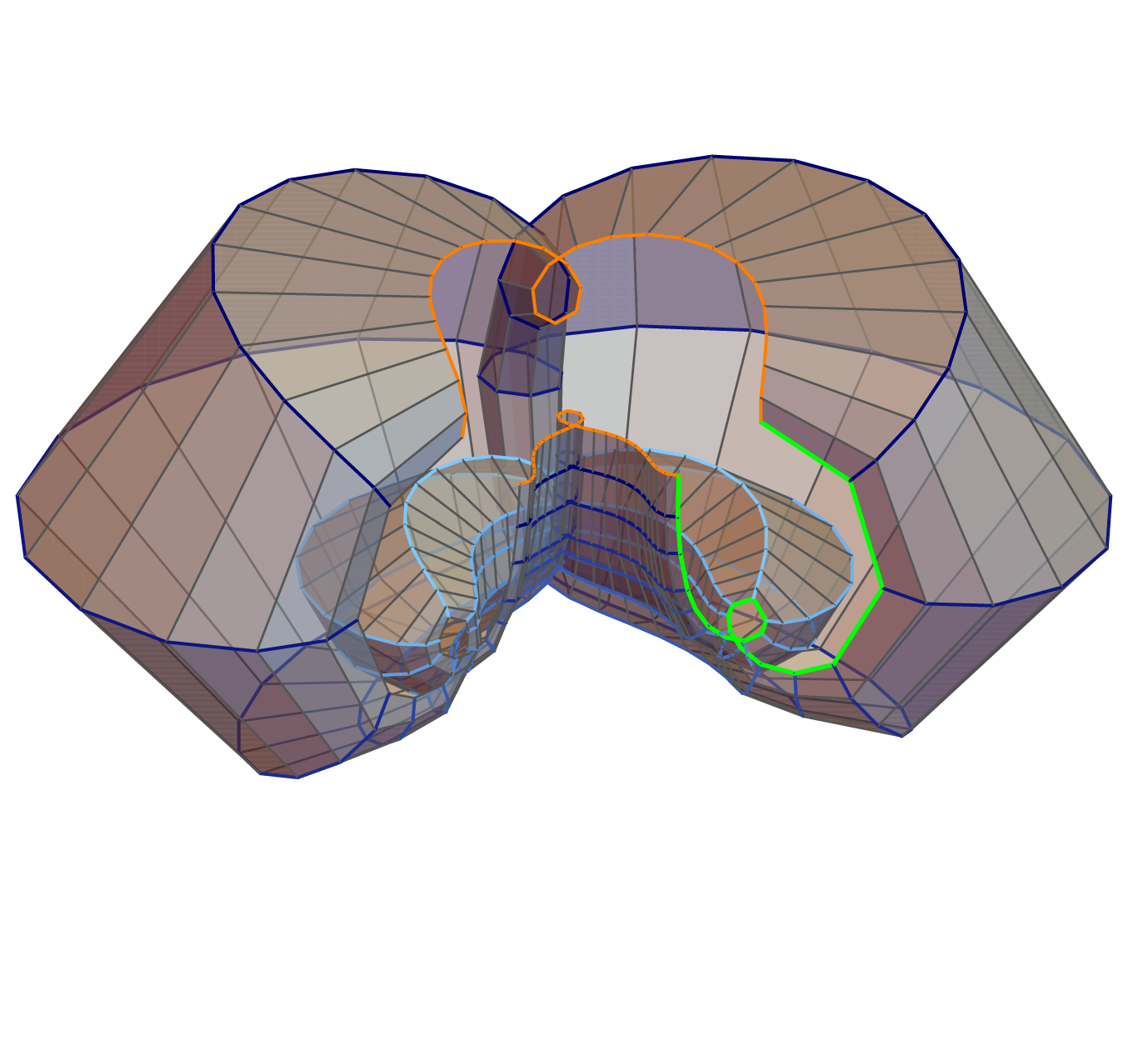}
\end{minipage}
\begin{minipage}{4cm}
\hspace*{1cm}\vspace*{-0.5cm}\includegraphics[scale=0.36]{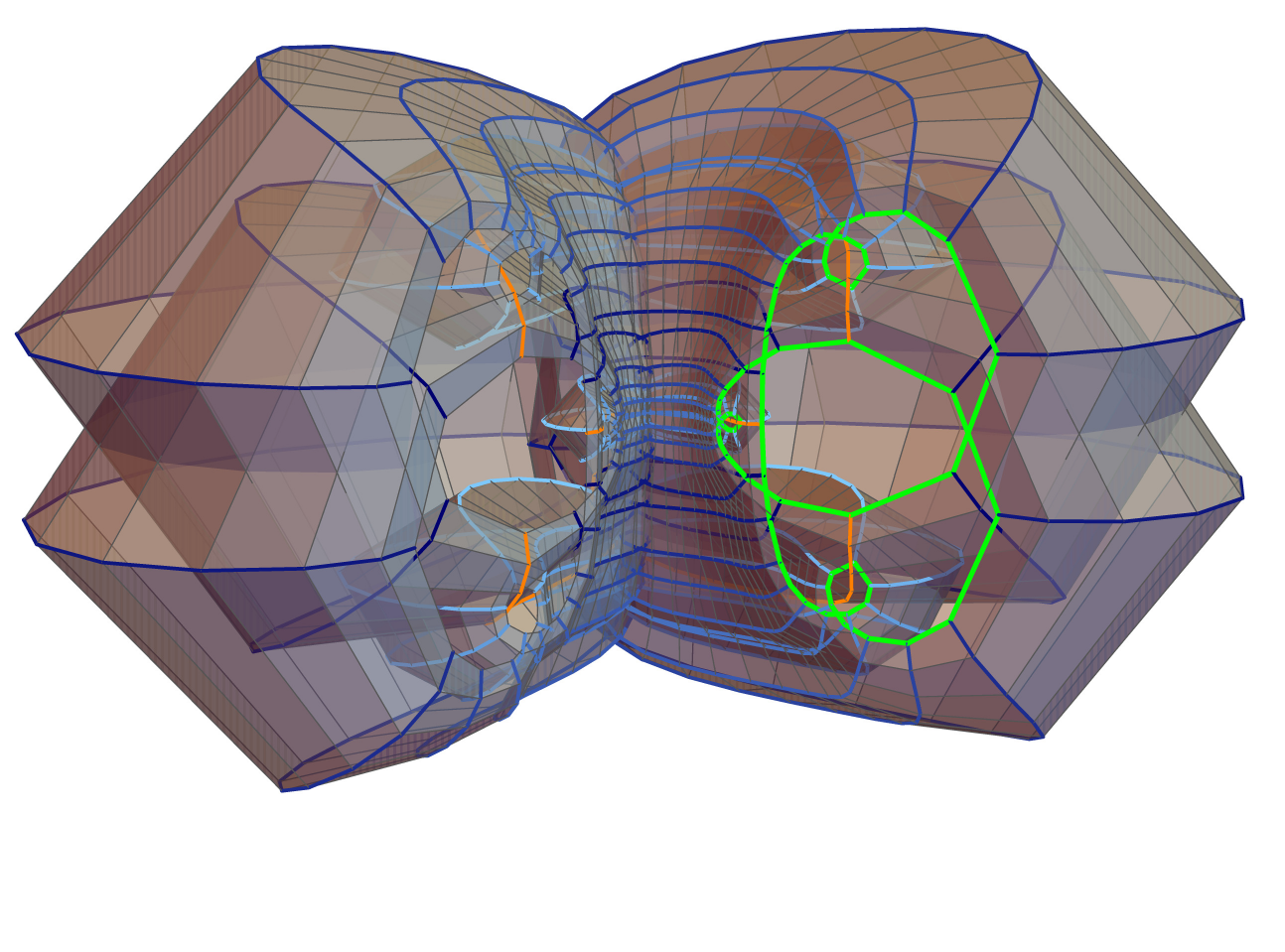}
\\\hspace*{1cm}\vspace*{1.7cm}\includegraphics[scale=0.3]{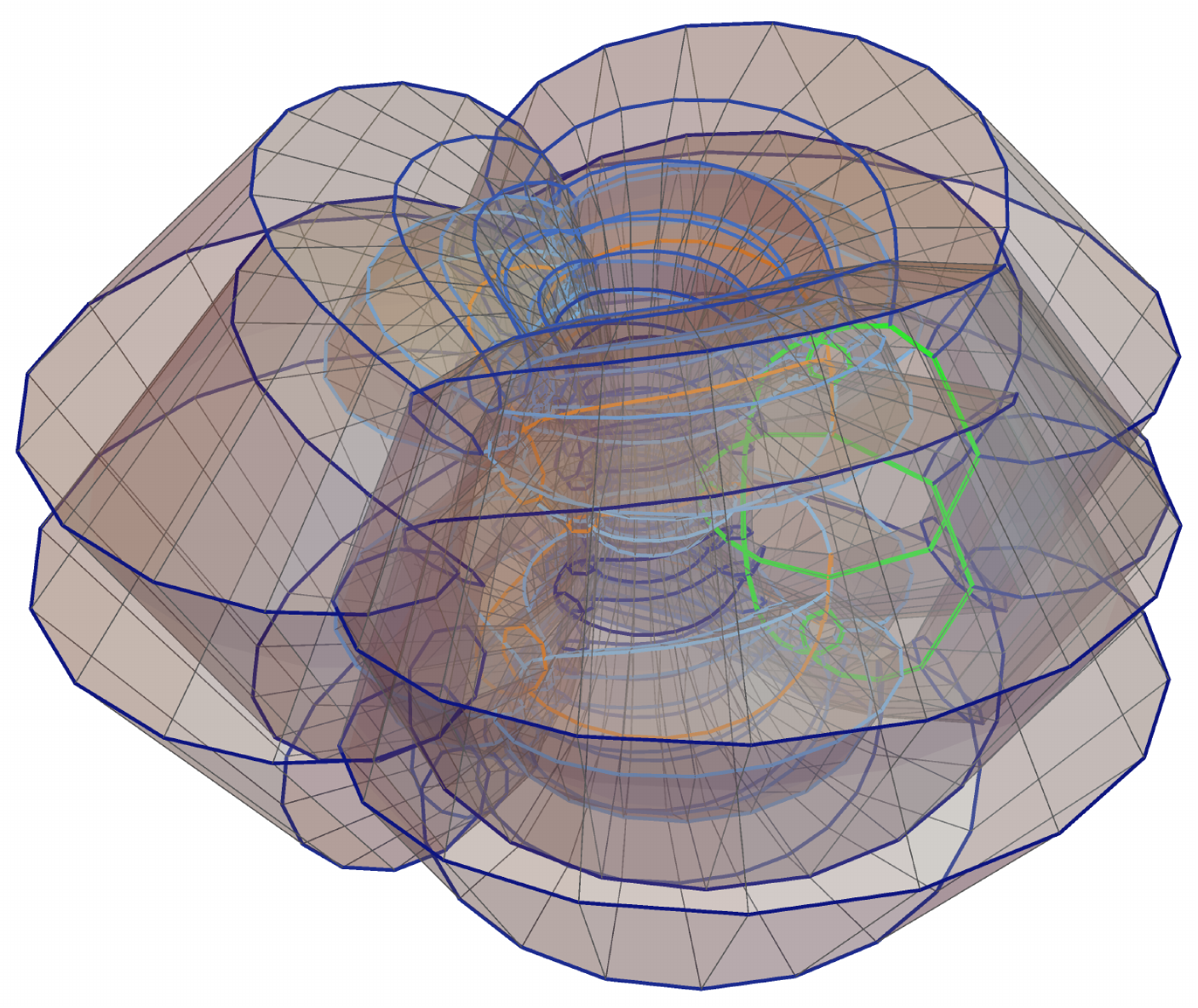}
\end{minipage}
\vspace*{-1.8cm}\caption{A closed discrete arc-length preserving Darboux transform of a discrete circle in Euclidean space (orange, see~\cite{CHO2023102065}) and its extension to a discrete $(M)$-type holomorphic map. The corresponding folding axes are concentric circles. The discrete isothermic net in 3-space is obtained by an lifted-folding and provides a topological torus that consists of three fundamental pieces and their reflectional extensions.}\label{fig_dcircle}
\end{figure}

%
%
\begin{figure}
\begin{minipage}{7cm}
\includegraphics[scale=0.29]{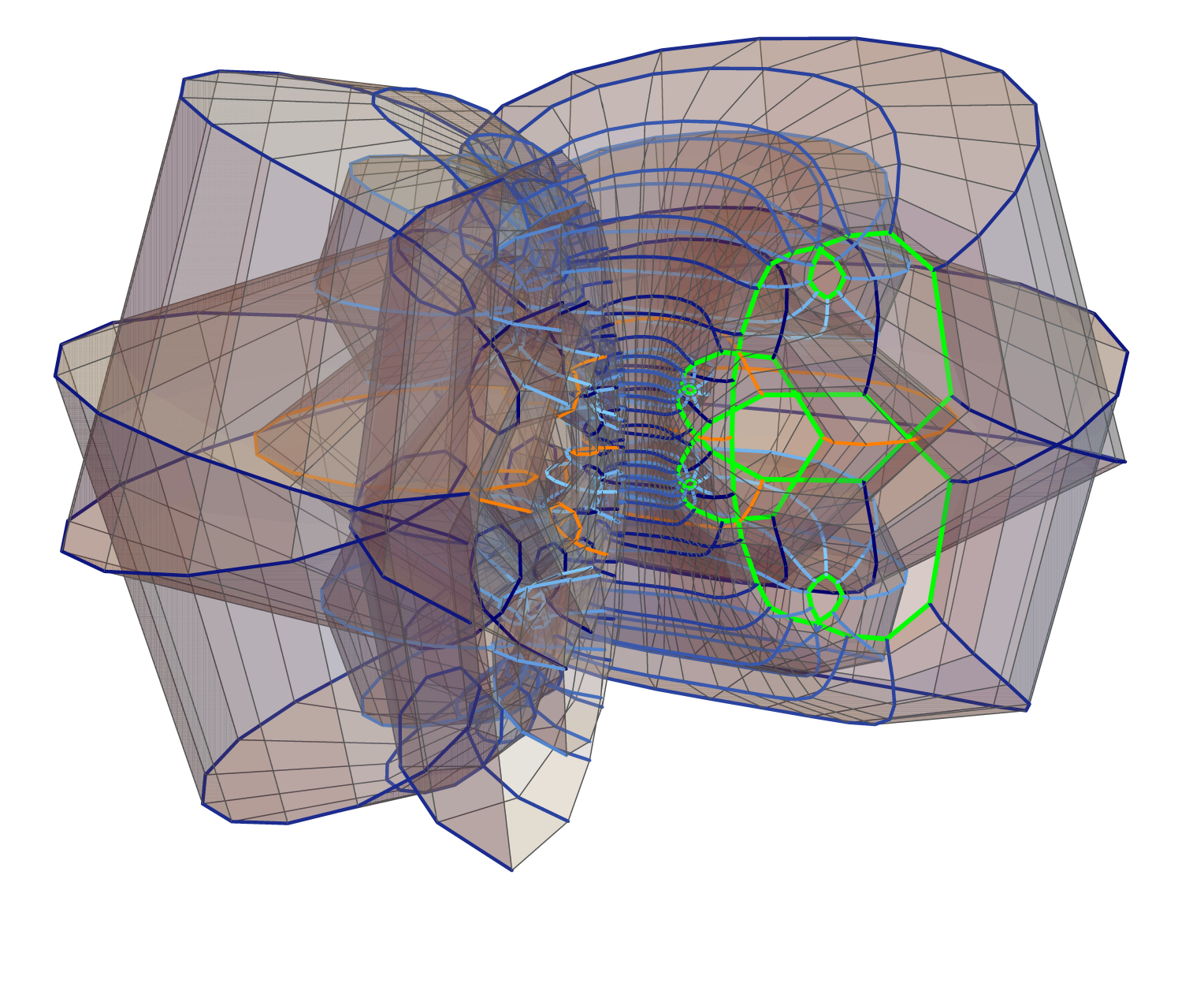}
\end{minipage}
\begin{minipage}{7cm}
\includegraphics[scale=0.34]{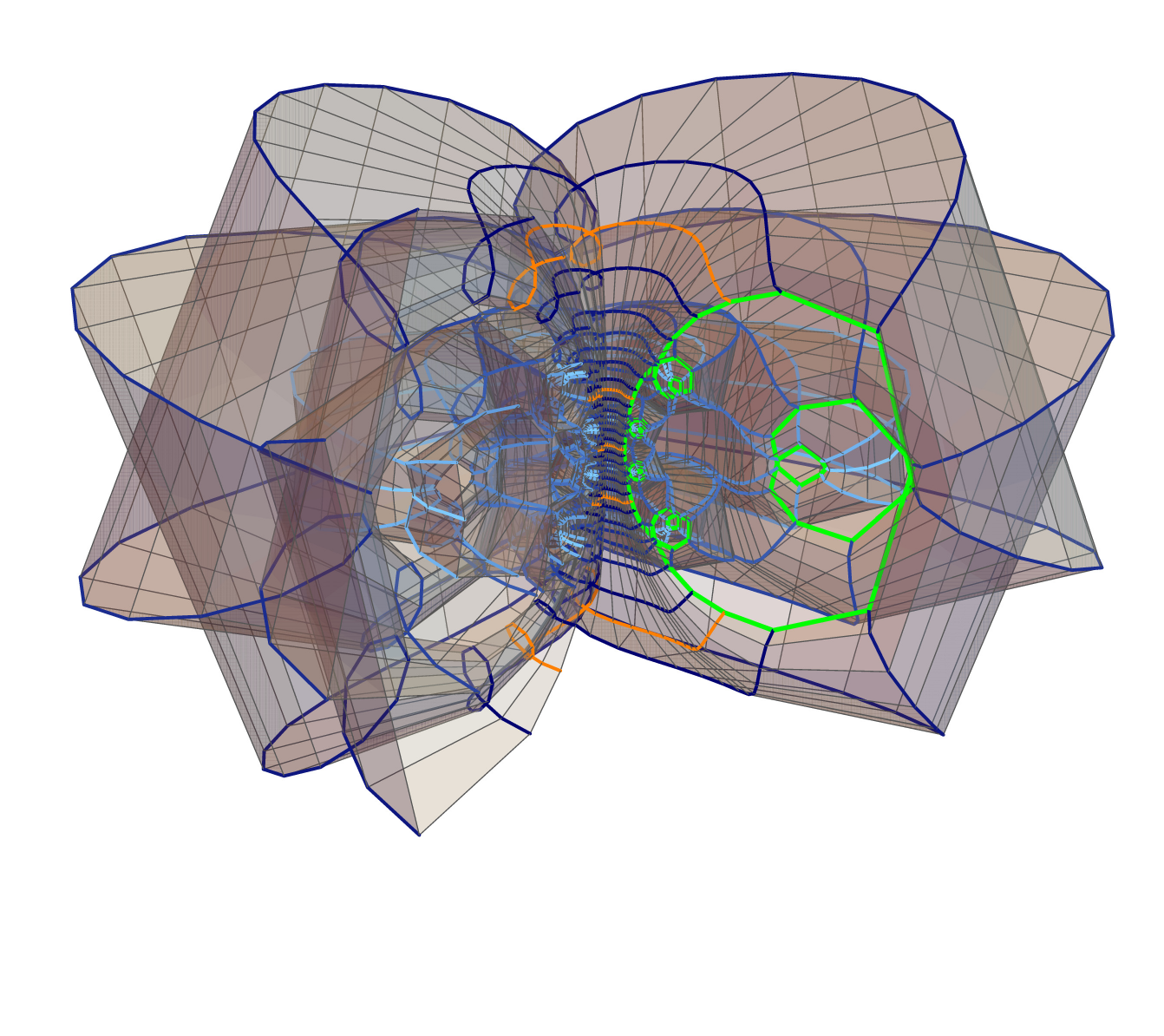}
\end{minipage}
\caption{Discrete isothermic tori consisting of four and six fundamental pieces and their reflectional extensions.}\label{fig_dcircle2}
\end{figure}
%
%
%
\subsection{Discrete isothermic topological cylinders and tori with symmetries}\label{subsect_tori}
The concept of lifted-folding provides an efficient way to control the topology of the generated discrete surfaces with spherical lines of curvature. This relies on the obvious fact that  lifted-foldings preserve closedness of the spherical parameter lines. Specifically, periodic holomorphic maps of $(M)$-type lead automatically to isothermic topological cylinders.

\bigskipp Smooth isothermic tori with planar curvature lines have been constructed in~\cite{bobenko2023isothermic} by piecing together several fundamental pieces. Here we use a similar idea to generate discrete isothermic tori with a family of planar or spherical parameter lines. 

Suppose that $F$ is a discrete isothermic net given by a finite sequence $(f_1, \cdots, f_n)$ of spherical parameter lines. We say that $f$ is a \emph{fundamental piece} bounded by the spheres $\s_1 \ni \f_1$ and $\s_n \ni \f_n$. 

A reflection of the patch in the boundary sphere $s_n$, then yields a canonical extension of the discrete surface: in the light-cone model, this reflection is described by the inversion $\sigma_a$ with respect to $\mathfrak{a}:=\s_n + \inner{\s_n, \p}\p$. In terms of the folding construction described in Theorem~\ref{thm_mflexible}, this amounts to a sign change of the folding parameters. Thus, we obtain the isothermic net $(f_1, \cdots, f_{n-1}, f_n, \sigma_a(f_{n-1}), \cdots, \sigma_a(f_1))$, called an \emph{extended fundamental piece} (see Figure~\ref{fig_eight}, \emph{middle}).

Reflection in the new boundary sphere $\sigma_a(s_1)$, then again extends this isothermic patch. As a consequence, successive reflections lead to an infinite sequence of spherical parameter lines that give a discrete isothermic net compound of extended fundamental pieces. 

\bigskipp Depending on the intersection angle between the two boundary spheres $s_1$ and $s_n$ of a fundamental piece, this construction closes up after a certain number of reflections. As we will see below, by using this reflection principle, we can generate discrete isothermic tori. Some examples are illustrated in the Figures~\ref{fig_eight},~\ref{fig_dcircle} and~\ref{fig_dcircle2}. 

The intersection angle of the two boundary spheres $s_1$ and $s_n$ of a fundamental piece can be changed by varying the folding parameters. For a prescribed angle, formula~(\ref{equ_intersection_angle}) allows to explicitly compute solutions for the folding parameters. Solutions are far from being unique. For example, by just applying lifted-foldings to change the first and the last sphere of the fundamental piece, we gain a two parameter freedom that generically allows to generate any prescribed angle.

\begin{thm}
If the two boundary spheres of an extended fundamental piece obtained from a periodic holomorphic map of $(M)$-type intersect at a rational multiple of~$2\pi$, the isothermic net generated by successive reflections provides a topological torus. 
\end{thm}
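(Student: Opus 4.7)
The plan is to recognise the successive-reflection construction as the orbit of the extended fundamental piece under a subgroup of the M\"obius group, and then to show that this subgroup is finite under the stated angle hypothesis. Let $s_-$ and $s_+$ denote the two boundary spheres of the extended fundamental piece~$\Pi$, and let $\sigma_-, \sigma_+$ be the corresponding M\"obius inversions. By construction, each subsequent extension is obtained by reflecting the current piece in its new outer boundary; expanding these reflections step by step, every new boundary sphere is the image of $s_-$ or $s_+$ under some reduced word in $\sigma_-$ and $\sigma_+$. Consequently the entire infinite net produced by successive reflections coincides with the orbit of $\Pi$ under the subgroup $G := \langle \sigma_-, \sigma_+ \rangle$ of the M\"obius group. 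Since M\"obius inversions preserve circularity, sphere-contact and cross-ratios, each $g(\Pi)$ with $g \in G$ is a congruent isothermic patch with spherical curvature lines, glued to its neighbours along a common boundary sphere.

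A standard fact about M\"obius reflections is that the composition $\rho := \sigma_- \circ \sigma_+$ of two reflections in spheres meeting at angle $\theta$ is an elliptic M\"obius transformation which fixes the intersection circle $s_- \cap s_+$ pointwise and rotates by angle $2\theta$ on every $2$-sphere orthogonal to this circle. Under the hypothesis $\theta = \tfrac{p}{q} \cdot 2\pi$ with $\gcd(p,q)=1$, the transformation $\rho$ therefore has finite order $N := q/\gcd(q,2p)$ in the M\"obius group, and $G$ is the finite dihedral group of order $2N$. Hence the orbit $\{ g(\Pi) : g \in G \}$ consists of exactly $2N$ congruent copies arranged around the invariant circle, and the net closes up in the coordinate direction perpendicular to the spherical curvature lines.

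Closure in the other coordinate direction --- along each individual spherical parameter line --- is automatic, because the underlying holomorphic map of $(M)$-type is periodic by hypothesis. By Theorem~\ref{thm_iso_folding} every spherical curvature line of the net is the image of a parameter line of the holomorphic map under a lifted-folding, so periodicity of the holomorphic map translates directly into closedness of the spherical parameter lines of the net. Combining the two kinds of closure, the $\mathbb{Z}^2$-indexed combinatorial structure underlying the net descends to a quotient of $\mathbb{Z}^2$ by two independent translations, which is a topological torus.

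The main technical point I expect to have to address is the coherent gluing across boundaries: one must verify that after $2N$ extensions not only the spheres but also the full vertex-data of $\Pi$ are restored. This should follow from $\rho^N = \mathrm{id}$ as a M\"obius transformation of $\mathbb{P}(\mathcal{L})$, which acts trivially on every point of the ambient projective light cone. Together with the propagation rules of $(M)$-type Darboux transforms, which uniquely reconstruct a fundamental piece from its initial data on a boundary sphere, this ensures that the $2N$-th iterated copy is identified setwise with $\Pi$, producing a well-defined discrete isothermic torus.
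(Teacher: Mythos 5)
Your proposal is correct and follows essentially the same route as the paper: all boundary spheres lie in the elliptic pencil spanned by the two boundary spheres of the extended fundamental piece, and rationality of the intersection angle forces the composition of the successive reflections to have finite order, closing the net. Your write-up is more detailed than the paper's three-sentence argument — in particular the identification of the reflection group as a finite dihedral group, the explicit appeal to periodicity of the holomorphic map for closure along the spherical parameter lines, and the remark on coherent gluing of vertex data are all points the paper leaves implicit — but the underlying mechanism is identical.
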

\begin{proof}
Suppose that the boundary spheres $s_1$ and $\sigma_a(s_1)$ intersect and therefore determine an elliptic sphere pencil. Since the boundary spheres of all subsequent extended fundamental pieces are obtained by suitable  reflections, all those boundary spheres lie in this elliptic sphere pencil. 
Thus, if the angle is a rational multiple of $2\pi$, the composition of an appropriate number of these   reflections gives the identity and the net indeed closes to a topological torus.
\end{proof}

\appendix
\setcounter{thm}{0}
\section{Light cone model}
\noindent This article is mainly concerned with special configurations of points, lines and circles in 2-space, as well as points, planes and spheres in 3-space. Moreover, to gain insights into the geometry of certain discrete curves we consider them in space form geometry, that is, those are viewed as curves in a space of constant sectional curvature.

For our investigations we make use of the light cone model, where those geometric objects are naturally represented. Moreover, in this setup all space form geometries arise as subgeometries of Lie sphere geometry, which enables us to simply switch between different space forms.

\bigskipp At this point we give a very brief introduction to this subject and set out the notations used in this work. We also  collect some basic facts that will be used throughout the text. Our focus lies on the 2-dimensional case. The generalization to 3-space follows accordingly by considering the 6-dimensional vector space $\mathbb{R}^{4,2}$. 

For more details the reader is referred to the exhaustive literature in this area, for example, to the surveys \cite{blaschke} and \cite{book_cecil}.

\bigskipp We work in a 5-dimensional vector space $\mathbb{R}^{3,2}$ equipped with an inner product of signature $(3,2)$ denoted by $\inner{. ,.}$. The central object in this model is the projective light-cone
\begin{equation*}
\Light:= \{ \mathbb{R}\mathfrak{v} \subset \mathbb{R}^{3,2} \ | \ \inner{\mathfrak{v}, \mathfrak{v}}=0, \mathfrak{v} \neq 0 \}.
\end{equation*}
Elements in $\Light$ are then in 1-to-1 correspondence with the set of points, oriented circles and oriented lines in $\mathbb{R}^2$ via the following identification table
\\\begin{center}
\begin{tabular}{p{5.5cm} | p{7.5cm}}
\textbf{Geometric objects in $\mathbb{R}^2 \cup \infty$} & \textbf{Vectors in $\Light$}
\\\hline $\infty$ & $\q_0:=( 0,0, 1, -1, 0 )$
\\\hline point $(x,y) \in \mathbb{R}^2$ & $\mathbb{R}(x ,y, \frac{1}{2}(1-x^2-y^2), \frac{1}{2}(1+x^2+y^2), 0)$
\\\hline oriented circle with radius $r \in \mathbb{R}^\times$ and center $(x,y) \in \mathbb{R}^2$ & $\mathbb{R}(x ,y, \frac{1}{2}(1-x^2-y^2+r^2), \frac{1}{2}(1+x^2+y^2-r^2), r)$
\\\hline oriented line with normal distance $d$ and unit normal vector $(x,y) \in \mathbb{R}^2$ & $\mathbb{R}(x,y ,-d,d,1)$
\end{tabular}
\end{center} 
\ \\As common in M\"obius geometry, we consider oriented lines as circles with infinite radius.
%
%

\bigskipp Moreover, throughout this work we will use the following notation convention: homogeneous coordinates of elements in the projective space $\mathbb{P}(\mathbb{R}^{3,2})$ will be denoted by the
corresponding black letter; if statements hold for arbitrary
homogeneous coordinates we will use this convention without 
explicitly mentioning it.

\bigskipp In addition to this table, we further define a so-called \emph{point-sphere complex} $\p \in \mathbb{R}^{3,2}$ as 
\begin{equation*}
\p:=(0,0,0,0,1).
\end{equation*} 
An element $v \in \Light$ then represents a point in $\mathbb{R}^2$ if and only if $\inner{\mathfrak{v}, \p}=0$. Similarly, the vector~$\mathfrak{q}_0$ is used to identify Euclidean lines: $v \in \Light$ is identified with an oriented line in $\mathbb{R}^2$ if and only if $\inner{\mathfrak{v}, \q_0 }=0$.

\bigskipp Two geometric objects $u,v$ in $\mathbb{R}^2$ are in \emph{oriented contact}, that is, two circles are tangent with corresponding orientation or, a point lies on a circle, if and only if $\inner{\mathfrak{u}, \mathfrak{v}}=0$. 

The angle $\varphi$ between two circles $u,v
\in \Light$ is given by
\begin{equation}\label{equ_intersection_angle}
 \cos \varphi
  = 1 + \frac{
   \inner{\mathfrak{u},\mathfrak{v}}}
   {\inner{\mathfrak{u},\mathfrak{p}}
   \inner{\mathfrak{v},\mathfrak{p}}}.
\end{equation} 

\bigskipp Any element $a \in \mathbb{P}(\mathbb{R}^{3,2})$ defines
a \emph{linear (circle) complex}
$\mathbb{P}(\mathcal{L}\cap\{a\}^\perp)$,
that is, a 2-dimensional family of circles. Depending on the signature of the vector $a$, those families admit different geometric interpretations. To see those, we introduce the (possibly complex) vector 
\begin{equation*}
\mathfrak{a}^\star:=\mathfrak{a} + \lambda \p, \ \text{ where } \lambda := \inner{\mathfrak{a}, \p} - \sqrt{\inner{\mathfrak{a}, \p}^2 + \inner{\mathfrak{a}, \mathfrak{a}} } 
\end{equation*}
and observe that $\chi:=\frac{\inner{\mathfrak{s}, \mathfrak{a}^\star}}{\inner{\mathfrak{s}, \p}\inner{\mathfrak{a}^\star, \p}}\equiv const.$ for all circles~$s$ in the linear circle complex determined by~$a$. 

We obtain three types of linear complexes:
\begin{itemize}
\item if $\inner{ \mathfrak{a}, \mathfrak{a}} = 0$, then the 2-dimensional family consists of all elements in $\Light$ that are in oriented contact with the circle represented by~$a^\star=a$;
\item if $\inner{\mathfrak{a}, \mathfrak{a}} > 0$, then the vector $a^\star \in \Light$ represents a circle. Due to (\ref{equ_intersection_angle}), all circles in the linear circle complex intersect this circle $a^\star$ at the constant angle~$\cos \varphi = 1 + \chi$; 
\item if $\inner{\mathfrak{a}, \mathfrak{a}} < 0$, there are two cases: either $a^\star$ defines a real-valued vector in $\Light$, then the circles in the linear complex intersect $a^\star$ at a constant imaginary angle~$\cos \varphi = 1 + \chi$; or $a^\star$ is a circle with imaginary radius, where the geometric interpretation is obscure. The latter case includes for example linear circle complexes consisting of all circles with the same constant radius.
\end{itemize}
We call the (possibly complex) light-like vector $a^\star$ the \emph{directrix of the linear circle complex}.
%
%
\begin{fact}\label{fact_const_complex}
Let $\{c_i \}_{i \in I}$ be a family of circles in $\Light$. If there exists a choice of homogeneous coordinates~$\cc_i \in c_i$ and two vectors $\m, \tilde{\m} \in \mathbb{R}^{3,2}$ such that
\begin{equation*}
\inner{\cc_i, \m} =: \xi \equiv const, \ \ \text{and } \ \ \inner{\cc_i, \tilde{\m}} =: \tilde{\xi} \equiv const
\end{equation*}
for all $i \in I$, then those circles lie in the fixed linear complex determined by $\frac{1}{\xi} \m - \frac{1}{\tilde{\xi}} \tilde{\m}$.
\end{fact}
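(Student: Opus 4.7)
The proof is essentially a one-line computation, and the plan is to make the linear combination $\frac{1}{\xi}\m - \frac{1}{\tilde{\xi}}\tilde{\m}$ do the work. First, I would implicitly assume $\xi, \tilde\xi \neq 0$, since otherwise one of $\m, \tilde{\m}$ itself already provides a linear complex containing all the $c_i$, and the claim is vacuous or needs to be stated separately.

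Next, set $\aaa := \frac{1}{\xi}\m - \frac{1}{\tilde{\xi}}\tilde{\m} \in \mathbb{R}^{3,2}$. By bilinearity of $\inner{\cdot,\cdot}$ and the two hypotheses,
\begin{equation*}
\inner{\cc_i, \aaa} = \frac{1}{\xi}\inner{\cc_i,\m} - \frac{1}{\tilde{\xi}}\inner{\cc_i,\tilde{\m}} = \frac{\xi}{\xi} - \frac{\tilde{\xi}}{\tilde{\xi}} = 0
\end{equation*}
for every $i \in I$. Hence each $\cc_i$ lies in $\aaa^\perp$, and therefore $c_i \in \mathbb{P}(\mathcal{L} \cap \{\aaa\}^\perp)$, which is by definition the linear complex determined by $\aaa$.

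Finally, I would add the brief observation that membership in a linear complex is a projective condition on the circles: although $\xi$ and $\tilde{\xi}$ depend on the chosen representatives $\cc_i$, the condition $\inner{\cc_i,\aaa}=0$ is homogeneous in $\cc_i$ and so descends to a well-defined condition on the points $c_i \in \mathbb{P}(\mathbb{R}^{3,2})$. There is no real obstacle here; the only point worth highlighting is the choice of coefficients $1/\xi$ and $-1/\tilde{\xi}$, which is forced by the requirement that the two contributions cancel uniformly in $i$.
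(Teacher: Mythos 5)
Your proof is correct: the paper states this Fact in the appendix without proof, and your one-line bilinearity computation $\inner{\cc_i, \tfrac{1}{\xi}\m - \tfrac{1}{\tilde{\xi}}\tilde{\m}} = 1 - 1 = 0$ is exactly the intended verification. Your side remarks on the implicit assumption $\xi,\tilde{\xi}\neq 0$ and on the projective well-definedness are sensible and do not change the argument.
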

%
%
%
\noindent Any linear complex~$a \in \mathbb{P}(\mathbb{R}^{3,2})$ with $\inner{\mathfrak{a}, \mathfrak{a}} \neq 0$ gives rise to a reflection~$\sigma_a$ via
\begin{equation}\label{equ_formula_inversion}
 \sigma_a:\mathbb{R}^{3,2} \rightarrow \mathbb{R}^{3,2}, \ \ 
 \mathfrak{s} \mapsto \sigma_a(\mathfrak{s}) :
  = \mathfrak{s}-\frac{2\inner{\mathfrak{s}, \mathfrak{a}}}
   {\inner{\mathfrak{a}, \mathfrak{a}}}\mathfrak{a}.
\end{equation}  
Those maps, also called \emph{inversions}, are involutions that map the projective light cone onto itself and preserve oriented contact between circles. However, note that if $\inner{\mathfrak{a}, \p} \neq 0$, an inversion possibly maps points to circles and vice versa. Inversions that preserve the subspace of points are called \emph{M-inversions} and are characterized by the condition~$\inner{\mathfrak{a}, \p}=0$. 

Any M\"obius transformation of $\mathbb{R}^2$ can be composed as a sequence of M-inversions, while the more general group of Lie transformations is obtained by composing all types of inversions. Thus, the orthogonal group~$O(3,2)$ is a double cover of the group of Lie transformations.
%
%
\begin{fact}\label{fact_fixedpoint}
An element~$\s \in \mathbb{R}^{3,2}$ is a fixed point of the inversion $\sigma_a$ if and only if $\s \perp \mathfrak{a}$.
\end{fact}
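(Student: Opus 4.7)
The plan is to read the claim directly off the defining formula (\ref{equ_formula_inversion}) for the inversion $\sigma_a$. Since
\begin{equation*}
\sigma_a(\mathfrak{s}) - \mathfrak{s} = -\frac{2\inner{\mathfrak{s}, \mathfrak{a}}}{\inner{\mathfrak{a}, \mathfrak{a}}}\mathfrak{a},
\end{equation*}
the vector $\mathfrak{s}$ is fixed precisely when the right-hand side vanishes. Because $\mathfrak{a}\neq 0$ and, by hypothesis, $\inner{\mathfrak{a},\mathfrak{a}}\neq 0$ (otherwise $\sigma_a$ would not even be defined via this formula), vanishing of this scalar multiple of $\mathfrak{a}$ is equivalent to $\inner{\mathfrak{s},\mathfrak{a}}=0$, which is exactly the orthogonality condition $\mathfrak{s}\perp \mathfrak{a}$.

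There is really no obstacle here; the fact is a one-line consequence of the definition. The only thing worth being careful about is that the statement of the fact is made at the level of vectors in $\mathbb{R}^{3,2}$, not at the level of projective classes, so I would not need to worry about the subtlety that nonzero rescalings of $\mathfrak{s}$ by the factor $1 - \tfrac{2\inner{\mathfrak{s},\mathfrak{a}}}{\inner{\mathfrak{a},\mathfrak{a}}}$ would also fix the projective point $[\mathfrak{s}]$. At the vector level the argument is strictly an equivalence: $\mathfrak{s}$ is fixed $\iff$ $\inner{\mathfrak{s},\mathfrak{a}}\mathfrak{a}=0$ $\iff$ $\inner{\mathfrak{s},\mathfrak{a}}=0$. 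This completes the verification in both directions simultaneously.
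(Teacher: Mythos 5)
Your argument is correct and is exactly the immediate verification the paper relies on: the paper states this as a Fact without proof precisely because, as you observe, $\sigma_a(\mathfrak{s})-\mathfrak{s}$ is a scalar multiple of $\mathfrak{a}\neq 0$ that vanishes if and only if $\inner{\mathfrak{s},\mathfrak{a}}=0$. Your remark that the statement is at the vector level (so no projective rescaling subtlety arises) is a sensible point of care and does not affect the conclusion.
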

%
%
\begin{fact}\label{fact_inversion_complex}
Suppose that a Lie inversion $\sigma_z$ is given by $\mathfrak{z}=\mathfrak{a}-\mathfrak{b}$. This Lie inversion maps $\spann{\mathfrak{a}}$ onto $\spann{\mathfrak{b}}$ if and only if $\inner{\mathfrak{a},\mathfrak{a}}=\inner{\mathfrak{b},\mathfrak{b}}$.

In particular, if $a, b \in \Light$ represent two circles, then the Lie inversion with respect to $\mathfrak{z}=\mathfrak{a}-\mathfrak{b}$ maps these two circles onto each other.
\end{fact}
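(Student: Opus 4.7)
The plan is a direct calculation based on the explicit formula~(\ref{equ_formula_inversion}) for the inversion $\sigma_z$. Set $\mathfrak{z}:=\mathfrak{a}-\mathfrak{b}$ and compute
\begin{equation*}
\inner{\mathfrak{a},\mathfrak{z}}=\inner{\mathfrak{a},\mathfrak{a}}-\inner{\mathfrak{a},\mathfrak{b}},\qquad \inner{\mathfrak{z},\mathfrak{z}}=\inner{\mathfrak{a},\mathfrak{a}}-2\inner{\mathfrak{a},\mathfrak{b}}+\inner{\mathfrak{b},\mathfrak{b}}.
\end{equation*}
Substituting into~(\ref{equ_formula_inversion}) yields
\begin{equation*}
\sigma_{z}(\mathfrak{a})=\Big(1-\tfrac{2\inner{\mathfrak{a},\mathfrak{z}}}{\inner{\mathfrak{z},\mathfrak{z}}}\Big)\mathfrak{a}+\tfrac{2\inner{\mathfrak{a},\mathfrak{z}}}{\inner{\mathfrak{z},\mathfrak{z}}}\mathfrak{b}.
\end{equation*}

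For the forward implication, assume $\inner{\mathfrak{a},\mathfrak{a}}=\inner{\mathfrak{b},\mathfrak{b}}$. Then the two scalars above satisfy $\inner{\mathfrak{z},\mathfrak{z}}=2\inner{\mathfrak{a},\mathfrak{z}}$, so the coefficient $\tfrac{2\inner{\mathfrak{a},\mathfrak{z}}}{\inner{\mathfrak{z},\mathfrak{z}}}$ equals $1$ and we obtain $\sigma_{z}(\mathfrak{a})=\mathfrak{b}\in\spann{\mathfrak{b}}$ (in particular this also gives the ``in particular'' claim, since $\mathfrak{a},\mathfrak{b}\in\mathcal{L}$ satisfy $\inner{\mathfrak{a},\mathfrak{a}}=0=\inner{\mathfrak{b},\mathfrak{b}}$).

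For the converse, assume $\sigma_{z}(\mathfrak{a})\in\spann{\mathfrak{b}}$ and, without loss of generality, that $\mathfrak{a}$ and $\mathfrak{b}$ are linearly independent (otherwise $\spann{\mathfrak{a}}=\spann{\mathfrak{b}}$ and the assertion is vacuous). Writing $\sigma_{z}(\mathfrak{a})=\lambda\mathfrak{b}$ and comparing coefficients in the displayed formula against the basis $\{\mathfrak{a},\mathfrak{b}\}$ forces the coefficient of $\mathfrak{a}$ to vanish, i.e.\ $\tfrac{2\inner{\mathfrak{a},\mathfrak{z}}}{\inner{\mathfrak{z},\mathfrak{z}}}=1$; rearranging gives exactly $\inner{\mathfrak{a},\mathfrak{a}}=\inner{\mathfrak{b},\mathfrak{b}}$.

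There is no real obstacle here; the only cosmetic point to check is that the inversion is well-defined, which amounts to $\inner{\mathfrak{z},\mathfrak{z}}\neq0$ (implicit in calling $\sigma_{z}$ a Lie inversion), and the handling of the degenerate case $\mathfrak{a}\in\spann{\mathfrak{b}}$, which is either excluded or trivial.
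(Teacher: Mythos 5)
Your computation is correct and complete: the paper states this Fact in the appendix without proof, and your direct expansion of $\sigma_z(\mathfrak{a})=\bigl(1-\tfrac{2\inner{\mathfrak{a},\mathfrak{z}}}{\inner{\mathfrak{z},\mathfrak{z}}}\bigr)\mathfrak{a}+\tfrac{2\inner{\mathfrak{a},\mathfrak{z}}}{\inner{\mathfrak{z},\mathfrak{z}}}\mathfrak{b}$ is exactly the standard verification one would supply. Your handling of the side conditions ($\inner{\mathfrak{z},\mathfrak{z}}\neq 0$, linear independence of $\mathfrak{a}$ and $\mathfrak{b}$ for the converse) is appropriate; nothing further is needed.
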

%
%
Space forms in this model are recovered by the choice of \emph{space form vectors $\mathfrak{q} \in \mathbb{R}^{3,2}$} with $\inner{\p, \q}=0$. Geometric configurations composed of circles in~$\mathbb{R}^2$ can then be interpreted in a space form of constant sectional~$\kappa_\mathcal{Q}=-\inner{\q, \q}$ as follows 
\begin{itemize}
\item $\inner{\q, \q}=0$: if $\q \in \spann{\q_0}$ represents infinity, then the configuration is directly interpreted in  \textbf{Euclidean space}; else, if $\q$ represents a point, then we obtain an actual Euclidean space form after applying a M\"obius transformation that maps $\q$ to $\infty$. 
\\[-6pt]\item $\inner{\q, \q}>0$: configurations are interpreted in the Poincare disc or half-plane modelling \textbf{hyperbolic space}. The space form vector $\q$ determines the respective (unoriented) boundary of the model via $\mathfrak{q}\pm\sqrt{\inner{\q,\q}}\p$;
\\[-6pt]\item $\inner{\q, \q}<0$: planar configurations in this case are related to \textbf{spherical geometries} modelled on $S^2 \subset \mathbb{R}^3$ via an appropriate stereographic projection. This projection maps $\q_\infty$ to the north pole $N$ of $S^2$ as well as the point $\q_\infty - \frac{2\inner{\q, \q_\infty}}{\inner{\q,\q}}\q$ to the antipodal point of $N$.
\end{itemize} 
We denote the space form determined by the space form vector $\q$ by $\mathcal{Q}$. 

\bigskipp All \emph{space form motions (isometries) of $\Q$} are then obtained as compositions of any number of M-inversions that have the space form vector~$\q$ as fixed point.

\bigskipp The \emph{geodesic curvature $\kappa$ of a circle $s \in \Light$} with respect to the space form $\mathcal{Q}$ is given by 
\begin{equation}\label{equ_formula_curvature}
\kappa=\frac{\inner{\mathfrak{s}, \mathfrak{q}}}{\inner{\mathfrak{s}, \mathfrak{p}}}.
\end{equation}
Geodesics in space forms, i.\,e.\,curves with vanishing geodesic curvature, are circles. By (\ref{equ_formula_curvature}), it follows that a circle $s \in \Light$ represents a geodesic in the space form~$\mathcal{Q}$ if and only if $\inner{\mathfrak{s}, \q}=0$. 
%
%

\bigskipp All notions discussed above have a natural generalization to 3-space, which is modelled in the vector space~$\mathbb{R}^{4,2}$. 

In particular, for a given geometric configuration in $\mathbb{R}^2$, there exists a \emph{canonical embedding}
\begin{equation*}
\iota: \mathbb{R}^{3,2} \to \mathbb{R}^{4,2}, \ x = \{x_1,x_2,x_3,x_4,x_5 \} \mapsto \ \iota(x):=\{x_1,x_2,0,x_3,x_4,x_5  \}.
\end{equation*}
Geometrically, the map $\iota$ assigns to a vector representing a point in $\mathbb{R}^2$ a vector that represents the corresponding point in the x-y-plane in $\mathbb{R}^3$. Furthermore, any oriented circle in $\mathbb{R}^2$ is identified with an oriented sphere that contains this circle and intersects the x-y-plane orthogonally. This embedding also holds for linear complexes.
%
%
\bibliography{elasticabib}
%
%

\bigskip

\noindent\\[12pt]\begin{minipage}{14cm}
\textbf{Tim Hoffmann}, \href{mailto:tim.hoffmann@ma.tum.de}{tim.hoffmann@ma.tum.de}
\\Department of Mathematics,
Technical University of Munich, 85748 Garching, Germany
\end{minipage}
\\[12pt]\begin{minipage}{14cm}
\textbf{Gudrun Szewieczek}, \href{mailto:gudrun.szewieczek@tum.de}{gudrun.szewieczek@tum.de}
\\Department of Mathematics,
Technical University of Munich, 85748 Garching, Germany
\end{minipage}
\end{document}